        \pgfplotsset{compat=1.6}
\newtheorem{thm}{Theorem}[section]
\newtheorem{cor}[thm]{Corollary}
\newtheorem{prop}[thm]{Proposition}
\newtheorem{lem}[thm]{Lemma}
\theoremstyle{definition}
\newtheorem{defn}[thm]{Definition}
\theoremstyle{remark}
\newtheorem{rmk}[thm]{Remark}
\theoremstyle{definition}
\theoremstyle{definition}
\newtheorem{ex}[thm]{Example}
\theoremstyle{definition}
\numberwithin{equation}{section}
\title[Bounds on saddle connections for flat spheres]{Bounds on saddle connections for flat spheres}
\author{Kai Fu}
\address[Kai Fu]{Max Planck Institute for Mathematics in the Sciences, Leipzig, Germany}
\email{kai.fu@mis.mpg.de}
\author{Guillaume Tahar}
\address[Guillaume Tahar]{Beijing Institute of Mathematical Sciences and Applications, Huairou District, Beijing, China}
\email{guillaume.tahar@bimsa.cn}
\date{\today}
\keywords{Flat metric, Conical singularities, Saddle connection, Delaunay triangulation, Curvature gap, Flat annulus}
\begin{document}
\begin{abstract}
We consider a flat metric with conical singularities on the sphere. Under the assumption that no partial sum of angle defects is equal to $2\pi$, we draw on the geometry of immersed disks to obtain an explicit upper bound on the number of saddle connections with at most $k$ self-intersections. Additionally, we establish an upper bound on their lengths for a surface with a normalized area. Finally, we apply these bounds to the counting of singular trajectories in irrational polygonal billiards.
\end{abstract}
\maketitle
\setcounter{tocdepth}{1}
\tableofcontents

\section{Introduction}

A \textit{flat sphere} is a topological sphere endowed with flat metric in the complement of finitely many conical singularities. Equivalently, it can be described by a collection of Euclidean triangles glued isometrically along a pairing of edges (see \cite{Ta1} or \cite{Tr} for details). When conical angles are rational multiples of $\pi$, flat metrics on the sphere are induced by $k$-differentials (see \cite{BCGGM} for an extensive study of these differentials and their moduli spaces).

\begin{defn}
A \textit{trajectory} in a flat sphere is a geodesic without singularities in the interior. It is \textit{simple} if there is no self-intersections in the interior. A \textit{saddle connection} is a finite trajectory whose endpoints are conical singularities.
Self-intersections are counted with their multiplicities (see Definition~\ref{defn:transverse}).
\end{defn}

In this paper, we are interested in the counting of saddle connections on flat spheres. The key ingredients of our bounds are:
\begin{enumerate}
    \item the notion of the \textit{curvature gap}, measuring the obstruction to realize a partition of the set of conical singularities in two sets of equal total curvatures, see Section~\ref{sec:curvature};
    \item the geometric properties of \textit{flat annuli}, analogs of flat cylinders of translation surfaces, see Section~\ref{sec:annuli};
    \item locally isometric immersions of disks and subsequent \textit{Delaunay triangulations}, see Section~\ref{sec:Delaunay}.
\end{enumerate}

We focus on the sphere case only in this paper because there is no reasonable generalization for our notion of the curvature gap in higher genus cases. We further restrict to the case of a positive curvature gap. This assumption can be regarded as generic among flat spheres: indeed, the vanishing of the curvature gap imposes additional linear constraints on the cone angles; see Section~\ref{sec:curvature}.

\begin{thm}\label{thm:MAIN}
In a flat sphere with $n$ conical singularities and a curvature gap of $\delta>0$, the number of saddle connections with at most $k$ self-intersections is at most $(3n-6)2^{s}$ where 
$s=\frac{20n(n-1)\sqrt{k} + 20n}{\delta}$.
\par
In the case of simple saddle connections, the upper bound can be improved to $\frac{1}{(3n-7)!}(\frac{5n}{\delta}+3n-7)^{3n-6} + 3n-6$.
\end{thm}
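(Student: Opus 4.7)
My plan is to tackle the two statements of Theorem~\ref{thm:MAIN} in sequence, establishing the bound for simple saddle connections directly, then lifting to a covering space to handle the case of at most $k$ self-intersections. Both parts rest on the three ingredients listed in the introduction: the curvature gap of Section~\ref{sec:curvature}, the geometry of flat annuli of Section~\ref{sec:annuli}, and the Delaunay-type triangulations coming from immersed disks of Section~\ref{sec:Delaunay}.

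\emph{Simple saddle connections.} The key observation is that every simple saddle connection $\gamma$ arises as an edge of some Delaunay triangulation of the flat sphere. Using the locally isometric immersion of a disk attached to $\gamma$ from Section~\ref{sec:Delaunay}, one extends $\gamma$ to a Delaunay cell whose boundary reads off a subset of the cone singularities. The curvature gap hypothesis is the essential input: it prevents the accumulated angle defect along the boundary of such an immersed disk from ever equalling $2\pi$ at an intermediate stage, and combined with the diameter estimates for flat annuli (Section~\ref{sec:annuli}) this forces an explicit linear length bound of the form $L(\gamma)\leq 5n/\delta$ on any simple saddle connection, after the area is normalised. Since any Delaunay triangulation of the flat sphere has $3n-6$ edges, a simplex-volume estimate on the admissible tuples of integer lengths summing to at most $L+3n-7$ yields the main term $\frac{1}{(3n-7)!}\bigl(\tfrac{5n}{\delta}+3n-7\bigr)^{3n-6}$; the additive $3n-6$ accounts for the edges of a fixed reference Delaunay triangulation that are present independently of the length bound.

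\emph{Saddle connections with at most $k$ self-intersections.} For $\gamma$ carrying at most $k$ transverse self-intersections, I would unfold $\gamma$ into a simple arc on a ramified cover $\widetilde X\to X$ of degree $d=O(\sqrt{k})$, using the standard observation that an immersed arc of combinatorial length $\ell$ supports at most $\binom{\ell}{2}$ self-intersections. The cover $\widetilde X$ remains a flat sphere-like object with at most $n(n-1)\sqrt{k}$ cone points and with curvature gap at least $\delta$ inherited from the base. Applying the simple-case length bound on $\widetilde X$ gives $L(\gamma)\leq C\,n(n-1)\sqrt{k}/\delta$, and each such lift can occur as an edge in exponentially many Delaunay triangulations, controlled by a sequence of edge flips of length linear in $L$; this produces the factor $2^{s}$ with $s=\frac{20n(n-1)\sqrt{k}+20n}{\delta}$. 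The outer prefactor $(3n-6)$ is again the number of Delaunay edges in the base surface.

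\emph{Main obstacle.} The geometric crux is the explicit linear length bound $L(\gamma)\leq 5n/\delta$ in the simple case. Extracting the factor $5n/\delta$ from the curvature gap requires a careful analysis of how partial sums of angle defects accumulate along the boundary of the immersed disk, with the curvature gap $\delta$ being precisely what ensures that no intermediate partial sum can land at $2\pi$, so that the immersion must close up within a controlled diameter. The covering-space lifting and the simplex-volume counting are both comparatively routine once this quantitative length bound is in hand.
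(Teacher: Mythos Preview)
Your plan diverges from the paper at essentially every structural step, and several of the proposed moves fail.

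\textbf{The role of $5n/\delta$.} In the paper this is not a metric length bound, and simple saddle connections are \emph{not} shown to be Delaunay edges for some triangulation. One fixes a single Delaunay triangulation and defines the \emph{combinatorial length} of a trajectory as the number of its crossings with Delaunay edges. The core estimate (Lemma~\ref{lem:disk}) is local: inside one Delaunay disk, a simple saddle connection can leave at most $5/(2\delta)$ chords crossing the inscribed triangle, proved by controlling the circular arc swept out by consecutive chords via Gauss--Bonnet and the curvature gap. Summing over the $2n-4$ triangles gives combinatorial length $\le 5n/\delta$ (Proposition~\ref{prop:combinatorial}). The simple-case count then follows by encoding a saddle connection by its vector of intersection numbers with the $3n-6$ fixed Delaunay edges (normal coordinates, Lemma~\ref{lem:normal}) and counting weak compositions; this is the source of the polynomial bound in Corollary~\ref{cor:numbersimplepoly}. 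Your picture of ``immersed disks attached to $\gamma$'' and ``partial sums of angle defects along the boundary'' is not the mechanism.

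\textbf{The self-intersecting case.} The covering argument you sketch does not work as stated. A branched cover of the sphere is typically of higher genus, so the curvature-gap framework (which is genus-zero specific, as the paper stresses) is lost; and even if one stayed on a sphere there is no reason the curvature gap should be inherited, since it is defined through partial sums over the cone points of the new surface. The paper instead stays on $X$: a trajectory $t$ is chopped into roughly $s/(2l)$ sub-trajectories each of combinatorial length $>2l$, so each contains a monogonal loop (Section~\ref{sub:monogon}). By Lemma~\ref{lem:innertraj}, two such sub-trajectories either intersect or their monogonal loops are non-isotopic; since a sphere with $n$ punctures admits at most $2n-3$ pairwise non-isotopic essential loops (Lemma~\ref{lem:isoloops}), a pigeonhole on $(2n-2)$-subsets forces at least $\bigl(\frac{m-1}{2n-2}\bigr)^2$ intersections, yielding $s\le 4l(n-1)\sqrt{|t\cap t|}+4l$ (Proposition~\ref{prop:SIcombinatorial}). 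Finally, the factor $(3n-6)2^{s}$ comes from a direct encoding of the isotopy class of an arc by its starting triangle corner and a left/right word of length $s$, not from edge flips.
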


\begin{rmk}
Similar estimates also hold for regular closed geodesics on flat spheres; see Corollary~\ref{cor:countclosedgeodesics}.
\end{rmk}

Theorem~\ref{thm:MAIN} is proved in Section~\ref{sub:Self}. It follows essentially from an estimate on combinatorial lengths of simple trajectories in a flat sphere (see Proposition~\ref{prop:combinatorial}) and some topological considerations about self-intersecting arcs.\newline

According to the counting of simple saddle connections in flat annuli (see Section~\ref{sub:example}), the curvature gap $\delta$ cannot be eliminated from the upper bound in Theorem~\ref{thm:MAIN}.\newline

We emphasize that the bounds in Theorem~\ref{thm:MAIN} are uniform in the following sense: as functions of $k$, their constants depend only on the number of singularities and on the curvature gap. This property will play an important role in subsequent work~\cite{Fu2024,Fu2025}, which we briefly discuss at the end of the introduction.\newline

On flat surfaces, one usually studies counting problems for saddle connections with bounded metric length; in general this is a difficult problem. Partial progress has been made in special settings; for instance, sub-exponential upper bounds are known in certain cases of flat spheres; see~\cite{Kat,Sch}. When all cone angles are rational multiples of~$\pi$, the situation is much better understood: in particular, Masur proved quadratic upper and lower bounds for the number of periodic geodesics of bounded length~\cite{Ma1,Ma2}. We refer to~\cite{AthreyaMasur2024,Filip2024,Wright2015,Zor} for further background and developments in this setting.

Theorem~\ref{thm:MAIN} provides a way to approach counting saddle connections with bounded length. This motivates the study of the relation between metric length and self-intersection number.

\begin{thm}\label{thm:MAIN2}
In a flat sphere of unit area with $n$ conical singularities and a curvature gap $\delta>0$, the metric length of any trajectory with at most $k$ self-intersections is at most 
$
\frac{40n(n-1)\sqrt{k}+40n}{\delta\sqrt{\pi}}
+
\frac{20n(n-1)\sqrt{k}+20n}{\delta^{3/2}\sqrt{2\pi}}
$.
\par
The bound in the case of a simple trajectory can be slightly improved to $\frac{10n}{\delta\sqrt{\pi}}+\frac{5n}{\delta^{3/2}\sqrt{2\pi}}$.
\end{thm}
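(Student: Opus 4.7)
The strategy is to multiply a combinatorial bound on the number of Delaunay triangles that a trajectory crosses by a geometric bound on how large such a triangle can be. The combinatorial ingredient is Proposition~\ref{prop:combinatorial}, and the geometric ingredient is Lemma~\ref{lem:Delaunay} together with the unit-area normalization.

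Concretely, I would fix a Delaunay triangulation $\mathcal{T}$ of the flat sphere. By Lemma~\ref{lem:Delaunay} every edge of $\mathcal{T}$ has length at most
\[
L=\frac{2}{\sqrt{\pi}}+\frac{1}{\sqrt{2\pi\delta}},
\]
where the first summand reflects the naive radius of a Euclidean disk of area at most one, and the second quantifies the additional room allowed by the curvature gap before the circumscribed disk of a Delaunay triangle is forced to contain a conical singularity. For any trajectory $\gamma$, the intersection of $\gamma$ with each triangle of $\mathcal{T}$ it meets is a Euclidean segment whose length is bounded by the diameter of the triangle, hence by its longest edge, hence by $L$.

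It then remains to control the combinatorial length of $\gamma$, namely the number of triangles of $\mathcal{T}$ it crosses. For a simple trajectory this is at most $5n/\delta$ by Proposition~\ref{prop:combinatorial}. For a trajectory with at most $k$ self-intersections, I would reuse the topological decomposition of Section~\ref{sub:Self} that underlies Theorem~\ref{thm:MAIN}: cut $\gamma$ at its self-intersection points into simple subarcs, apply the simple bound to each subarc, and sum using the combinatorial count of subarcs governed by the $\sqrt{k}$-factor. This yields a combinatorial length of at most $(20n(n-1)\sqrt{k}+20n)/\delta$.

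Multiplying the combinatorial length by $L$ and expanding produces
\[
\frac{40n(n-1)\sqrt{k}+40n}{\delta\sqrt{\pi}}+\frac{20n(n-1)\sqrt{k}+20n}{\delta^{3/2}\sqrt{2\pi}}
\]
in the general case, and $\frac{10n}{\delta\sqrt{\pi}}+\frac{5n}{\delta^{3/2}\sqrt{2\pi}}$ in the simple case. The main substantive obstacle sits inside Lemma~\ref{lem:Delaunay}: one must argue that the circumscribed disk of a Delaunay triangle can be immersed isometrically on the sphere without forced wrapping, and that the $\delta$-term precisely captures the admissible enlargement once the naive area bound has been saturated. Once this is granted, the assembly above is essentially a computation, and the bound is independent of the systole because $L$ depends only on the area and on $\delta$.
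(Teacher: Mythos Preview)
Your overall architecture matches the paper exactly: bound each Delaunay chord by $L=\tfrac{2}{\sqrt{\pi}}+\tfrac{1}{\sqrt{2\pi\delta}}$ via Lemma~\ref{lem:Delaunay}, bound the combinatorial length via Proposition~\ref{prop:combinatorial} (simple case) or Proposition~\ref{prop:SIcombinatorial} (self-intersecting case), and multiply. The simple case and the final arithmetic are fine.

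The one inaccuracy is your description of the self-intersecting step. You write that one should ``cut $\gamma$ at its self-intersection points into simple subarcs'' and then apply the simple bound to each. That procedure produces roughly $2k+1$ simple pieces for a trajectory with $k$ self-intersections, hence a combinatorial length of order $k\cdot \tfrac{5n}{\delta}$, which is linear in $k$ and does not recover the $\sqrt{k}$ in the statement. The argument actually used in Section~\ref{sub:Self} (Proposition~\ref{prop:SIcombinatorial}) is different: one chops $t$ into blocks of combinatorial length exceeding $2l$, so each block is forced to contain a monogonal loop; Lemma~\ref{lem:innertraj} and Lemma~\ref{lem:isoloops} then say that among any $2n-2$ such blocks two must intersect, and a pigeonhole count converts this into $m\le 2(n-1)\sqrt{|t\cap t|}+1$ blocks. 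That is where the $\sqrt{k}$ comes from. Since you cite the correct section and land on the correct bound $(20n(n-1)\sqrt{k}+20n)/\delta$, this is a misdescription rather than a wrong strategy, but as written your sketch does not justify the $\sqrt{k}$.
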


The proof of Theorem~\ref{thm:MAIN2} is given in Section~\ref{sub:length}. The bound relies on an estimate for the lengths of edges in Delaunay triangulations of flat spheres (see Lemma~\ref{lem:Delaunay}). In particular, it does not depend on the systole\footnote{The systole is the length of the shortest simple saddle connection.} of the surface. Delaunay triangulations have previously been used to study the distribution of saddle connections on flat surfaces; see~\cite{ACM} for example.

Lower bounds on the metric length in terms of the self-intersection number on flat spheres will be studied further in~\cite{Fu2024}.\newline

For comparison, on hyperbolic surfaces one has lower bounds on the metric length in terms of the self-intersection number; see~\cite{Ba,BaPV}. In contrast, Theorem~\ref{thm:MAIN2} provides upper bounds in the setting of flat spheres. Such upper bounds do not hold in general for hyperbolic surfaces. Indeed, when $3g-3+n>0$, there exist simple closed geodesics of arbitrarily large length on hyperbolic surfaces.\newline

From a broader perspective, our work fits into a comparison between spherical, flat, and hyperbolic geometries. On spherical surfaces, geodesics necessarily self-intersect, since any two geodesics intersect on $\mathbb{S}^2$. On hyperbolic surfaces, self-intersections arise from the action of the fundamental group on $\mathbb{H}^2$. Flat surfaces without singularities admit no self-intersecting geodesics. In contrast, in the flat setting studied here, conical singularities together with curvature constraints force geodesics to self-intersect. Given the extensive study of flat surfaces, it is natural to study self-intersections of geodesics.\newline

As a further consequence of the above length estimate on Delaunay edges, one also obtains an essentially sharp upper bound on the diameter of flat spheres.

\begin{cor}\label{cor:diameter}
In a flat sphere $X$ of unit area with $n$ conical singularities and a curvature gap $\delta>0$, the diameter of $X$ as a metric space is at most $(n+1)(\frac{2}{\sqrt{\pi}}+\frac{1}{\sqrt{2\pi\delta}})$.
\end{cor}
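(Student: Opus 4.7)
The plan is to deduce the corollary from Lemma~\ref{lem:Delaunay} together with a short combinatorial argument on the $1$-skeleton of a Delaunay triangulation. I would fix a Delaunay triangulation $T$ of $X$; by Euler's formula for the sphere, $T$ has $n$ vertices (the conical singularities), $3n-6$ edges, and $2n-4$ triangles. By Lemma~\ref{lem:Delaunay}, each edge of $T$ has length at most
\[
\ell:=\frac{2}{\sqrt{\pi}}+\frac{1}{\sqrt{2\pi\delta}}
\]
under the unit-area normalisation.

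Given two points $x,y\in X$, I would bound $\dist(x,y)$ by exhibiting a concrete piecewise-geodesic path built out of $T$. First, pick a triangle of $T$ containing $x$ and join $x$ to one of its vertices by a segment inside the triangle; its length is at most the diameter of that Euclidean triangle, which equals the length of its longest edge and hence is at most $\ell$. I would do the same on the $y$ side. Then I connect the two chosen vertices by a shortest path in the $1$-skeleton of $T$, which uses at most $n-1$ edges because the $1$-skeleton is a connected graph on $n$ vertices. Summing lengths yields a path from $x$ to $y$ of total length at most $\ell + (n-1)\ell + \ell = (n+1)\ell$, which is precisely the announced bound.

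The only nontrivial input is the per-edge estimate of Lemma~\ref{lem:Delaunay}, which I would use as a black box; the combinatorial step that follows is immediate, since the graph diameter of any connected $n$-vertex graph is at most $n-1$ and Euclidean triangles are geodesically convex with diameter equal to their longest edge. No real obstacle arises here: the only conceptual point is to notice that the two extra $\ell$ contributions coming from walking from $x$ and from $y$ into the $1$-skeleton are precisely what turns the factor $n-1$ into the factor $n+1$ of the statement. I would therefore expect the proof to be short, with essentially all of the analytic work already encapsulated in Lemma~\ref{lem:Delaunay}.
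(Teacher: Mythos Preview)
Your argument is correct and essentially matches the paper's: both reduce to Lemma~\ref{lem:Delaunay} plus a bound on path length in the $1$-skeleton of a Delaunay triangulation. The only cosmetic difference is that the paper marks $x$ and $y$ as two additional (angle-$2\pi$) singularities and retriangulates, so that $x$ and $y$ are already vertices and one walks along at most $n+1$ Delaunay edges; you instead keep the original triangulation, spend one segment of length $\le \ell$ inside a triangle at each end, and walk along at most $n-1$ edges in between, arriving at the same total $(n+1)\ell$.
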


\bigskip

We do not address the sharpness of the upper bounds in Theorem~\ref{thm:MAIN} and Theorem~\ref{thm:MAIN2}, but briefly comment on two aspects of these bounds. In Theorem~\ref{thm:MAIN}, the upper bounds grow exponentially in $\sqrt{k}$, where $k$ denotes the self-intersection number. In view of the sub-exponential upper bounds known for counting saddle connections with bounded metric length, one may expect that this growth in $k$ is not optimal. For the length bound in Theorem~\ref{thm:MAIN2}, Corollary~\ref{cor:diameter} suggests that the dependence on the curvature gap $\delta$ could potentially be of order $\delta^{-1/2}$. Clarifying the sharpness of these bounds remains an interesting question.\newline

As an application, these estimates apply in particular to polygonal billiards. A \textit{billiard path} in a polygonal billiard is a continuous path consisting of straight segments inside the polygon and reflections on the boundary, where the angles of incidence and reflection are equal. We assume that a billiard path stops when it hits a vertex. A billiard path is called a \textit{generalized diagonal} if it joins two vertices of the polygon. It is called \textit{periodic} if it returns to its starting point with the same direction. We also define the notion of a \textit{curvature gap} for polygonal billiards; see Section~\ref{sec:billiard}.

A polygonal billiard is called \textit{rational} if all angles of the polygon are rational multiples of~$\pi$; otherwise, it is called \textit{irrational}. Counting generalized diagonals and periodic billiard paths is a classical problem. The case of rational polygons is well studied; for an overview, see the surveys and books~\cite{AthreyaMasur2024,Filip2024, MasurTabachnikov2002,McMullen2023,Wright2015,Zor}.

Very little is known about irrational billiards. Partial results show, for instance, that every triangle with angles at most $112.3^\circ$ admits a periodic billiard path; see~\cite{RSch06, RSch08, tokarsky2018point}. The existence of a regular periodic billiard path for general polygons remains an open problem.

For generalized diagonals, Katok proved in~\cite{Kat} that the number of generalized diagonals grows sub-exponentially. Scheglov later provided in~\cite{Sch} an explicit sub-exponential upper bound for almost every triangle, but no general explicit bound is known for arbitrary polygons. For the lower bound, Hooper showed in~\cite{HooperLowerBounds} that for any $k \in \mathbb{N}$, there exists an irrational polygon for which the number of periodic billiard paths of length at most $R$ grows at least like $R \log^k R$.

From Theorem~\ref{thm:MAIN}, we deduce an explicit upper bound for a closely related counting problem involving generalized diagonals and periodic billiard paths; see Section~\ref{sec:billiard}. Care is needed when defining the self-intersection number for billiard paths. A precise definition is provided in Section~\ref{sec:intersectionnumberbilliardpaths}.

\begin{thm}\label{thm:MAIN3}
In a (possibly irrational) polygonal billiard with $n$ vertices and a curvature gap $\delta > 0$, the number of generalized diagonals with at most $k$ self-intersections is bounded from above by $(3n-6)2^{s}$ where $s=\frac{20n(n-1)\sqrt{k} + 20n}{\delta}$. The number of maximal families of parallel periodic billiard paths with at most $k$ self-intersections satisfies the same bound.
\par
Moreover, if the polygon is of area $1$, the Euclidean length of any generalized diagonal with at most $k$ self-intersections is at most $
\frac{40n(n-1)\sqrt{2k}+40\sqrt{2}n}{\delta\sqrt{\pi}}
+
\frac{20n(n-1)\sqrt{k}+20n}{\delta^{3/2}\sqrt{\pi}}
$.
\end{thm}

Beyond applications to polygonal billiards, the results of this article fit into a broader program; see~\cite{Fu2024,Fu2025}. In~\cite{Fu2024}, the relation between self-intersection numbers and metric lengths of saddle connections on flat spheres is studied further. Combined with Theorem~\ref{thm:MAIN2}, this yields uniform estimates on the metric length of saddle connections. These, in turn, allow Theorem~\ref{thm:MAIN} to provide uniform bounds for counting saddle connections with bounded metric length on flat spheres.

In~\cite{Fu2025}, averaging formulas for counting functions over the moduli space of flat spheres are studied. The uniform bounds provided by Theorem~\ref{thm:MAIN} are a key input for establishing integrability properties of these counting functions. 
More generally, the results of the present article are used in the derivation of averaging formulas.
\newline

\paragraph{\bf Acknowledgements.} The first author would like to thank his advisors, Vincent Delecroix and Elise Goujard, for introducing him to the flat geometry and many valuable discussions. He wants to acknowledge Yitwah Cheung for his invitation and the opportunity to conduct this research at Tsinghua University. The research of the first author is supported by ANR MoDiff (ANR-19-CE40-0003). The research by the second author is supported by the Beijing Natural Science Foundation (Grant IS23005) and the French National Research Agency under the project TIGerS (ANR-24-CE40-3604). The authors would also like to acknowledge Dmitri Panov and the anonymous referee for valuable remarks.

\section{Discrete curvature}\label{sec:curvature}

For a conical singularity $M$ of an angle $\theta_{M}$ in a flat sphere $X$, we define its \textit{discrete curvature} $k_{M}=\frac{2\pi-\theta_{M}}{2\pi}$. In particular we have $k_{M} \in ]-\infty,1[$. The classical Gauss-Bonnet formula writes $\sum\limits_{M \in X} k_{M} =2$.

\subsection{Curvature gap}

The obstruction to realize a partition of the set of conical singularities in two sets of equal total curvature is measured by the \textit{curvature gap}.

\begin{defn}\label{defn:gap}
For a flat sphere $X$ with a finite set $\Lambda$ of conical singularities, we define the \textit{curvature gap} of $X$ as 
$\inf\limits_{I \subset \Lambda} |1-\sum\limits_{i \in I} k_{i}|$.
\end{defn}

A combinatorial analog of the curvature gap has been introduced in Section~6 of \cite{Ta} to characterize strata of quadratic differentials where the corresponding half-translation surfaces can have infinitely many saddle connections.
\par
Indeed, any simple closed geodesic cuts out the sphere into two connected components, each containing a total discrete curvature of $1$ by Gauss-Bonnet formula. It follows that a flat sphere satisfying $\delta>0$ does not contain any simple closed geodesic (and therefore no flat cylinder).

Note that the vanishing of the curvature gap imposes affine conditions of the form
\[\sum_{i \in I} k_i = 1
\]
for some $I \subset \Lambda$, inside the space of curvatures $\{ (k_i)_{i\in\Lambda}\mid \, \sum_{i\in\Lambda} k_i = 2\}.$
In particular, the locus of zero curvature gap has Lebesgue measure zero in the space of curvatures.

\subsection{Upper bound on the curvature gap}

In this Section, we prove a sharp bound on the curvature gap and characterize completely the equality case.

\begin{lem}\label{lem:curvaturegap}
The curvature gap $\delta$ of any flat sphere satisfies $\delta \leq \frac{1}{3}$. The equality holds if and only if all of conical angles belong to $2\pi+\frac{4\pi}{3}\mathbb{Z}$.
\par
Equivalently, the equality case for the curvature gap corresponds to flat metrics of finite area induced by meromorphic cubic differentials with singularities of even order.
\end{lem}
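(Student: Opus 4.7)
The plan is to prove the inequality $\delta \leq 1/3$ by a prefix-sum contradiction, and then derive the discrete structure in the equality case by combining many subset-sum constraints with Gauss--Bonnet. I will sort the singularities so that $k_1 \geq k_2 \geq \cdots \geq k_n$ and use throughout that $\sum_i k_i = 2$ with each $k_i < 1$.

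For the inequality, I would suppose for contradiction $\delta > 1/3$, so every subset sum $S_I$ lies in $(-\infty, 2/3) \cup (4/3, +\infty)$. Applied to singletons with $k_i < 1 < 4/3$, this gives $k_i < 2/3$ for every $i$. Let $p$ be the largest index with $k_p > 0$; the prefix sums $P_j := k_1 + \cdots + k_j$ are strictly increasing on $\{0, 1, \ldots, p\}$, with $P_0 = 0$ and $P_p \geq P_n = 2$. Choosing $j^* := \min\{j : P_j \geq 2/3\}$ yields $P_{j^*-1} < 2/3$ and $P_{j^*} = P_{j^*-1} + k_{j^*} < 2/3 + 2/3 = 4/3$, so $P_{j^*} \in [2/3, 4/3)$ contradicts the assumption.

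The direction $(\Leftarrow)$ of the equality characterization is immediate: if every $k_i \in \tfrac{2}{3}\mathbb{Z}$, all subset sums lie in $\tfrac{2}{3}\mathbb{Z}$, whose closest elements to $1$ are $2/3$ and $4/3$. For $(\Rightarrow)$, I would assume $\delta = 1/3$, so every subset sum lies in $(-\infty, 2/3] \cup [4/3, +\infty)$. The same prefix-sum computation now forces $P_{j^*} = 2/3$. I would then bootstrap: the subset $\{1, \ldots, j^*, j^*+1\}$ has sum $2/3 + k_{j^*+1} \in (2/3, 4/3]$, which must equal $4/3$, giving $k_{j^*+1} = 2/3$; monotonicity yields $k_{j^*} = 2/3$, hence $j^* = 1$ and $k_1 = 2/3$. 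For each remaining positive $k_i$, the pair $\{1, i\}$ sums to $2/3 + k_i \in (2/3, 4/3]$, again forcing $k_i = 2/3$. So every positive curvature equals $2/3$, and Gauss--Bonnet gives $p \geq 3$.

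For each non-positive curvature $k_j$, the subsets $\{1, \ldots, l, j\}$ for $l = 1, \ldots, p$ have sums $\tfrac{2l}{3} + k_j$; intersecting the dichotomies ``$\leq 2/3$ or $\geq 4/3$'' over $l$ yields
\[
k_j \in \left\{0, -\tfrac{2}{3}, -\tfrac{4}{3}, \ldots, -\tfrac{2(p-2)}{3}\right\} \cup \left(-\infty, -\tfrac{2(p-1)}{3}\right].
\]
Gauss--Bonnet gives $\sum_{j > p} k_j = -\tfrac{2(p-3)}{3}$, so a single $k_j$ in the continuum tail would have $|k_j| \geq \tfrac{2(p-1)}{3} > \tfrac{2(p-3)}{3}$, already overshooting the total in absolute value. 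This places every $k_j$ in $\tfrac{2}{3}\mathbb{Z}$. The equivalent reformulation with cubic differentials follows from the formula $\theta = \tfrac{2\pi(m+3)}{3}$ for the conical angle at an order-$m$ singularity, which lies in $2\pi + \tfrac{4\pi}{3}\mathbb{Z}$ iff $m$ is even. The hard part will be this $(\Rightarrow)$ direction, where each subset-sum constraint is only a dichotomy and discretizing the curvatures demands layering all these constraints with the global identity $\sum k_i = 2$.
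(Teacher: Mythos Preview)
Your proof is correct and follows essentially the same strategy as the paper's. Both arguments hinge on the observation that each $k_i \leq 2/3$, so partial sums jump by at most $2/3$ and must land in $[2/3,4/3]$ somewhere; both then show in the equality case that every positive curvature equals $2/3$ and use the family of sums $\tfrac{2l}{3}+k_j$ together with Gauss--Bonnet to force the non-positive curvatures into $\tfrac{2}{3}\mathbb{Z}$. Your packaging via sorting and monotone prefix sums is a minor variant of the paper's minimal-counterexample induction, and your explicit exclusion of the continuum tail $(-\infty,-\tfrac{2(p-1)}{3}]$ is exactly the paper's implicit use of $k_t + \tfrac{2s}{3} \geq 2$.
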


\begin{proof}
In what follows we consider flat spheres with $n$ conical singularities of discrete curvatures $k_{1},\dots,k_{n} \in ]-\infty,1[$. Without loss of generality we assume that $k_{1},\dots,k_{s} >0$ for some rank $s$ while $k_{s+1},\dots,k_{n} \leq 0$. Since $\sum\limits_{i=1}^{n} k_{i} =2$, at least three conical singularities have positive curvature. In other words, we have $s \geq 3$.
\par
We first prove that for any such flat sphere satisfying $\delta \geq \frac{1}{3}$, all the positive discrete curvatures are equal to $\frac{2}{3}$ and the curvature gap is equal to $\frac{1}{3}$.
\par
By hypothesis, the positive discrete curvatures $k_{1},\dots,k_{s}$ are contained in the interval $]0,\frac{2}{3}]$. Moreover, we have  $\sum\limits_{i=1}^{s} k_{i} \geq 2$. Since no partial sum $\sum\limits_{i=1}^{l} k_{i}$ belongs to $]\frac{2}{3},\frac{4}{3}[$, there is a rank $l \leq s$ such that $\sum\limits_{i=1}^{l-1} k_{i} \leq \frac{2}{3}$ and $\sum\limits_{i=1}^{l} k_{i} \geq \frac{4}{3}$. We deduce that $k_{l} \geq \frac{2}{3}$ and therefore $k_{l}=\frac{2}{3}$. By definition, $\delta\leq|1-k_l| = \frac{1}{3}$, so $\delta = \frac{1}{3}$. In particular, there is no flat sphere satisfying $\delta > \frac{1}{3}$.
\par
Now, for any positive discrete curvature $k_{i}$ with $i \neq l$, we have $\frac{2}{3} < k_{l}+k_{i} \leq \frac{4}{3}$. Since $\delta = \frac{1}{3}$, we have $k_{l}+k_{i}=\frac{4}{3}$ and $k_{i}=\frac{2}{3}$. Every positive number among $k_{1},\dots,k_{n}$ is equal to $\frac{2}{3}$. 
\par
It remains to characterize completely the equality case. We already know that conical singularities of positive curvature have a conical angle equal to $\frac{2\pi}{3}$. We will prove that conical singularities of negative curvature have a discrete curvature that is an integer multiple of $\frac{2}{3}$ and therefore a conical angle that belong to $2\pi+\frac{4\pi}{3}\mathbb{N}$.
\par
We have $k_{1}=\dots=k_{s}=\frac{2}{3}$ and $k_{t} \leq 0$ for any $t>s$. For any such $k_{t}\leq 0$, consider the partial sums of the form $k_t + \sum\limits_{0\leq i \leq l} k_{i}=k_{t}+\frac{2l}{3}$ for $l$ satisfying $0 \leq l \leq s$. Since the sequence of these numbers increases by $\frac{2}{3}$ at each step, there exists $l$ such that
$$
k_t + \frac{2l}{3} \in  \left[\frac{2}{3},\frac{4}{3}\right].
$$
However, if $k_t + \frac{2l}{3}$ lies in the open interval $]\frac{2}{3},\frac{4}{3}[$, this would contradict $\delta=\frac{1}{3}$. Therefore, $k_t + \frac{2l}{3}$ must be $\frac{2}{3}$ or $\frac{4}{3}$. It follows that $k_{t}$ is an integer multiple of $\frac{2}{3}$.
\par
Then the corresponding flat metrics have conical angles in $2\pi+\frac{4\pi}{3}\mathbb{Z}$ at all singularities. Thus, the holonomy of the metric for a simple loop around any conical singularity is either trivial or a rotation of order three. In genus zero, the holonomy of the metric is generated by the holonomy of the loops around the conical singularities. Then, for any developing map $f$ of the metric, the cubic differential $(df)^{\otimes 3}$ is globally defined on the flat sphere. At a conical singularity of angle $2\pi(1-k_{i})$, $(df)^{\otimes 3}$ is a singularity of order $-3k_{i}$ (see \cite{BCGGM} for the background on differentials of higher orders). Since every discrete curvature $k_{i}$ is an integer multiple of $\frac{2}{3}$, every singularity of $(df)^{\otimes 3}$ is of even order and strictly larger than $-3$. The total order of the singularities is $-6$ (just because the total discrete curvature is $2$) so at least three conical singularities are double poles of $(df)^{\otimes 3}$ (their conical angle is equal to $\frac{2\pi}{3}$). In particular, $(df)^{\otimes 3}$ is a \textit{primitive} cubic differential (it is not the global power of a meromorphic $1$-form).
\par
Conversely, any primitive cubic differential $\omega$ with zeroes and poles of even order $k \geq -2$ on $\mathbb{CP}^{1}$ defines a flat metric with conical singularities at the zeroes and poles. All the conical angles belong to $\frac{2\pi}{3} + \frac{4\pi}{3}\mathbb{N}$ so the discrete curvatures are integer multiples of $\frac{2}{3}$ and we obtain $\delta \geq \frac{1}{3}$. Since the total order of the singularities of a cubic differential on $\mathbb{CP}^{1}$ is $-6$, $\omega$ has at least three double poles. Each of them is a conical singularity of angle $\frac{2\pi}{3}$. Their discrete curvature is $\frac{2}{3}$ and therefore $\delta=\frac{1}{3}$. In other words, the equality case $\delta=\frac{1}{3}$ exactly coincides with singular flat metrics induced by primitive cubic differentials with singularities of even order on $\mathbb{CP}^{1}$.
\end{proof}

\section{Flat annuli}\label{sec:annuli}

Just like the geometry of a translation surface is largely determined by shapes of its flat cylinders, \textit{flat annuli} play an analogous role for flat surfaces.

\begin{defn}
For any angle $\theta \in ]0,2\pi]$ and radii $0 \leq R < R'$, we define a \textit{flat annulus} $\mathcal{A}_{R,R',\theta}$ by identifying the two radial sides of domain $\lbrace{ z \in \mathbb{C}~\vert~R \le |z| \le R'~;~0\le arg(z)\le \theta \rbrace}$ by a rotation of the angle $\theta$.
\par
We refer to $\theta$ as the \textit{apex angle} of the annulus. The definition extends for any $\theta>0$ by a covering construction.
\end{defn}

The exponential map provides a holomorphic parametrization of the flat annulus $\mathcal{A}_{R,R',\theta}$ by the flat cylinder obtained from a rectangle $\lbrace{ z \in \mathbb{C}~\vert~0 \leq Re(z) \leq ln(R'/R)~;~0 \leq Im(z) \leq \theta \rbrace}$. The \textit{conformal modulus} of the flat annulus $\mathcal{A}_{R,R',\theta}$ is thus $\frac{ln(R'/R)}{\theta}$.
\par
Identifying the center of the flat annulus with $0 \in \mathbb{C}$, the polar coordinate of the complex plane provides a pair of orthogonal foliations on $\mathcal{A}_{R,R',\theta}$:
\begin{itemize}
    \item the \textit{radial foliation}, whose leaves are trajectories contained in lines incident to the center of the annulus;
    \item the \textit{orthoradial foliation}, whose leaves are circular arcs orthogonal to radial leaves.
\end{itemize}
In particular, the curvature radius of orthoradial leaves varies between $R$ and $R'$. We will refer to boundaries of orthoradial leaves of radii $R$ and $R'$ respectively as the \textit{inner boundary arc} and the \textit{outer boundary arc}. If $R=0$, the inner boundary arc is reduced to a conical singularity of the angle $\theta$.

\subsection{Trajectories in a flat annulus}\label{sub:trajannulus}

In any annulus $\mathcal{A}_{R,R',\theta}$, there is an important difference between trajectories starting from the inner boundary arc and trajectories starting from the outer boundary arc. We obtain this description by looking at the lift of the trajectory in the universal cover of the flat annulus.
\par
In any trajectory $t$ starting from the inner boundary arc, the radius (determining a leaf in the orthogonal foliation) along the trajectory is strictly increasing from $R$ to $R'$. It follows that it is simple and leaves the annulus through the outer boundary arc.
\par
For a trajectory $t$ starting from the outer boundary arc, we denote by $A$ the starting point of $t$ and $\alpha$ the angle of $t$ with the radial leaf incident to $A$. There are two different regimes (see Figure~\ref{fig:traj}):

    \begin{figure}[!htbp]
	\centering
	\begin{minipage}{0.5\linewidth}
	\includegraphics[width=\linewidth]{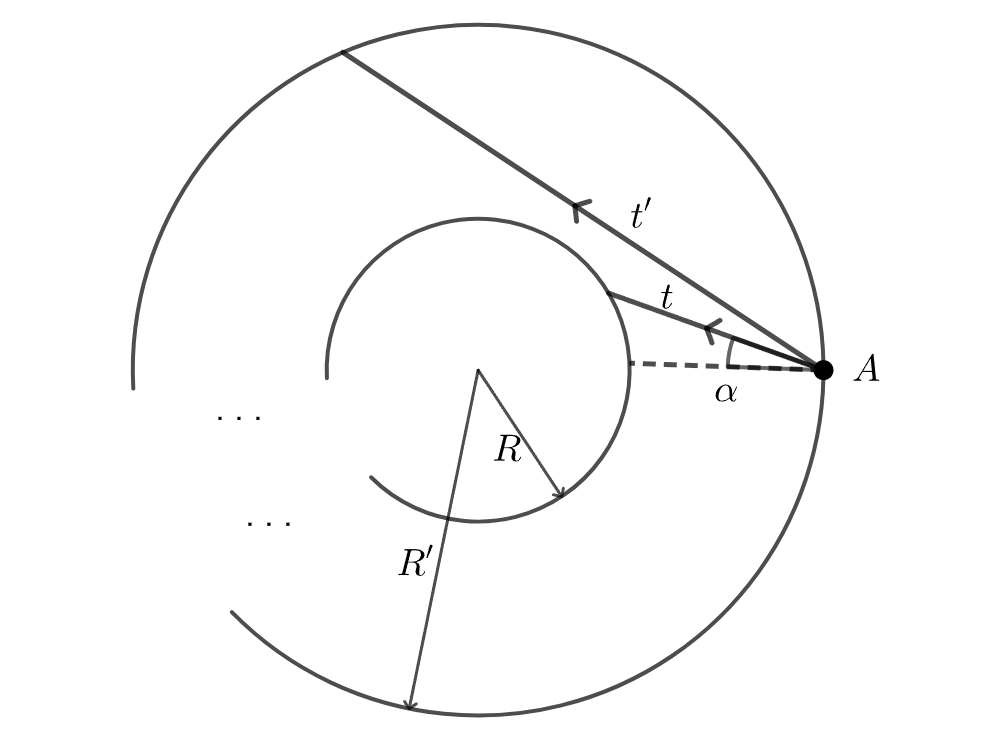}
	\footnotesize
	\end{minipage}
    \caption{Two trajectories $t$ and $t'$ starting at $A$ in the universal cover of a flat annulus.}\label{fig:traj}
    \end{figure}

\begin{itemize}
    \item if $\alpha \leq \arcsin(R'/R)$, the radius is strictly decreasing from $R'$ to $R$ and trajectory $t$ eventually leaves the annulus through the inner boundary arc. The trajectory is automatically simple.
    \item if $\alpha> \arcsin(R'/R)$, the radius is strictly decreasing from $R'$ to $R'\sin(\alpha)$ and then strictly increasing from $R'\sin(\alpha)$ to $R'$. The (possibly self-intersecting) trajectory eventually leaves the annulus through the outer boundary.
\end{itemize}
In the second case, the trajectory is a chord of the outer circle whose central angle is $\pi-2\alpha$. We observe that all the self-intersections of $t'$ belong to two radial leaves (see Figure~\ref{fig:crossing}):
\begin{enumerate}
    \item the radial leaf $\mathcal{L}$ containing the unique point where $t'$ is tangent to the orthoradial foliation and realizes the minimal radius $R'\sin(\alpha)$;
    \item the radial leaf obtained from $\mathcal{L}$ by a rotation of angle $\frac{\theta}{2}$.
\end{enumerate}
A direct counting then proves that the number of self-intersections in trajectory $t'$ is $\lfloor \frac{\pi-2\alpha}{\theta}\rfloor$.

   \begin{figure}[!htbp]
	\centering
	\begin{minipage}{0.5\linewidth}
	\includegraphics[width=\linewidth]{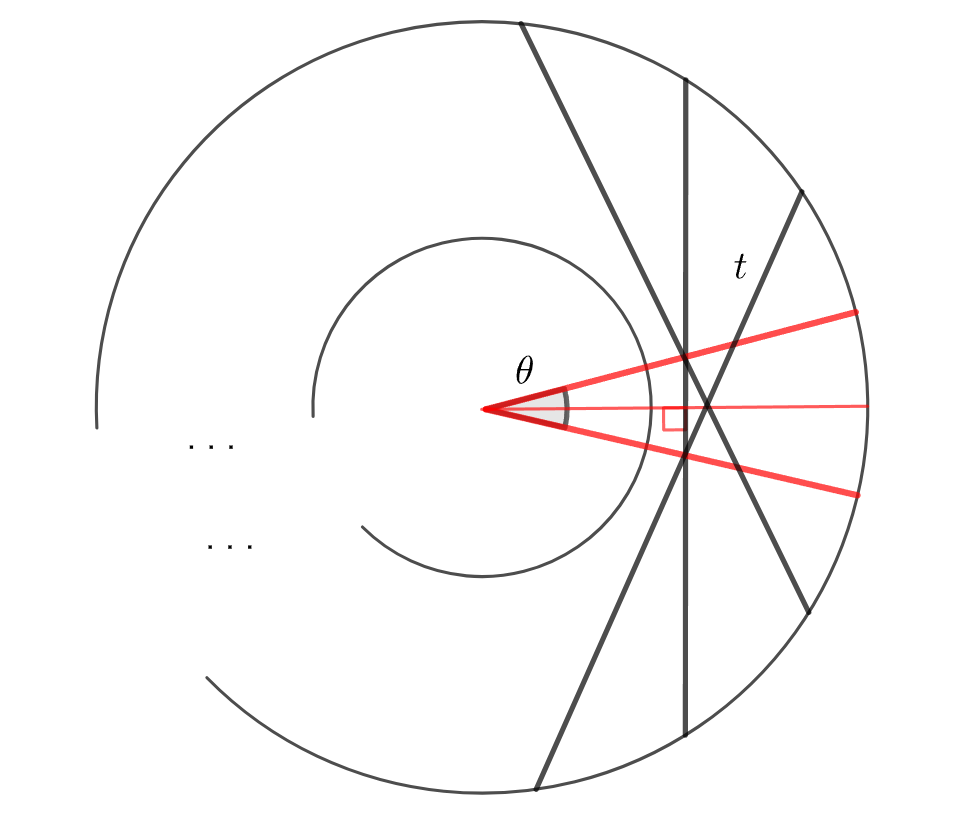}
	\footnotesize
	\end{minipage}
    \caption{The red radial segments are the copies of the leaves that the self-intersections of $t$ belong to.}\label{fig:crossing}
    \end{figure}

\subsection{Monogonal trajectories}\label{sub:monogon}

In a flat sphere, a \textit{monogonal trajectory} is a simple trajectory with the same endpoint and avoids any conical singularity. We call the endpoint the \textit{turning point}. Notice that the trajectory divides the angle at the turning point into two sub-angles. We call the smaller sub-angle the \textit{interior angle}. 
\par
The curvature gap provides constraints on interior angles of monogonal trajectories.

\begin{lem}\label{lem:monogon}
In a flat sphere $X$ with a curvature gap $\delta$, for any monogonal trajectory $\gamma$, the interior angle $\alpha$ of loop $\gamma$ satisfies $\alpha \leq \pi-2\pi\delta$.
\end{lem}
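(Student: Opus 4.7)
The plan is to apply Gauss--Bonnet to one of the two disks into which $\gamma$ separates the sphere, and then invoke the curvature gap hypothesis.

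First, since $\gamma$ is a simple loop on $X$ based at the conical singularity $P$, it cuts $X$ into two closed topological disks $D_1$, $D_2$ meeting along $\gamma$. I would let $D_1$ denote the disk whose corner at $P$ has interior angle $\alpha$, so that the corner of $D_2$ at $P$ has interior angle $\theta_P-\alpha$. Since $\alpha$ is chosen as the smaller sub-angle, $\alpha\le\theta_P/2\le\pi$ in the typical case $\theta_P\le 2\pi$.

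Next, I would apply Gauss--Bonnet to $D_1$. The boundary $\gamma$ is geodesic, contributing no geodesic curvature; the unique corner at $P$ produces a turning term $\pi-\alpha$ (the portion of the cone angle visible from inside $D_1$ is precisely $\alpha$, so the full cone at $P$ does not contribute separately); and each interior conical singularity $M$ contributes $2\pi-\theta_M=2\pi k_M$. With $\chi(D_1)=1$, this gives
\[
\sum_{M\in\mathrm{int}(D_1)\cap\Lambda}2\pi k_M+(\pi-\alpha)=2\pi,
\qquad\text{hence}\qquad
\sum_{M\in\mathrm{int}(D_1)\cap\Lambda}k_M=\tfrac{1}{2}+\tfrac{\alpha}{2\pi}.
\]

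Finally, applying Definition~\ref{defn:gap} to the subset $I:=\mathrm{int}(D_1)\cap\Lambda$ yields
\[
\Bigl|\tfrac{1}{2}-\tfrac{\alpha}{2\pi}\Bigr|=\Bigl|1-\sum_{M\in I}k_M\Bigr|\ge\delta.
\]
Since $\alpha\le\pi$, the expression inside the absolute value is non-negative and the bound collapses to $\tfrac{1}{2}-\tfrac{\alpha}{2\pi}\ge\delta$, i.e.\ $\alpha\le\pi-2\pi\delta$. The only genuine subtlety I foresee is justifying that the smaller sub-angle really satisfies $\alpha\le\pi$: if $\theta_P>2\pi$ both sub-angles at $P$ could a priori exceed $\pi$, and one would need to rule this out, presumably by running the symmetric argument on $D_2$ and observing that the two curvature-gap constraints (one for $I$, one for its complement in $\Lambda\setminus\{P\}$) cannot simultaneously push $\alpha$ above $\pi$. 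Beyond this bookkeeping, the proof reduces to a single application of Gauss--Bonnet.
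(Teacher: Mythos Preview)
Your argument is essentially the paper's own proof: apply Gauss--Bonnet to the disk bounded by $\gamma$ on the side of the smaller sub-angle, then invoke the curvature gap. The computation
\[
\sum_{M\in I}k_M=\tfrac{1}{2}+\tfrac{\alpha}{2\pi},\qquad
\Bigl|1-\sum_{M\in I}k_M\Bigr|\ge\delta
\]
is exactly what the paper has in mind.

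The one point that needs correcting is your identification of the turning point $P$ as a conical singularity. By the paper's definition (Section~\ref{sub:monogon}), a monogonal trajectory ``avoids any conical singularity'': since the interior of a trajectory already avoids singularities by definition, this extra clause is saying that the \emph{endpoint} $P$ is a regular point. Hence the total angle at $P$ is exactly $2\pi$, the smaller sub-angle automatically satisfies $\alpha\le\pi$, and the ``genuine subtlety'' you flag simply does not arise. With that clarified, your proof is complete and matches the paper's.
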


\begin{proof}
Any closed topological loop $\gamma$ decomposes $X$ into connected components and the set of conical singularities into two subsets. If $\gamma$ is a polygonal path with an interior angle, Gauss-Bonnet formula implies that $2\pi\sum k_i + \pi - \alpha = 2\pi$, where the sum runs over the curvatures of the singularities enclosed by $\gamma$. By the definition of the curvature gap, our inequality follows.
\end{proof}

A monogonal trajectory $\gamma$ is characterized by a pair $(\alpha,L)$ where $\alpha \in ]0,\pi[$ is the interior angle while $L$ is the length of the segment.
\par
The holonomy of the flat metric along the loop is a rotation of angle $\pi -\alpha$. Each point of $\gamma$ has a well-defined radius with respect to the fictive center of this rotation. A direct trigonometric computation proves that the maximal radius $\frac{L}{2\cos(\alpha/2)}$ is realized at the turning point while the minimal radius $\frac{L'\tan(\alpha/2)}{2}$ is realized at the midpoint of the unique segment of $\gamma$.
\par

Each monogonal trajectory $\gamma$ can be embedded in a unique way in a $1$-parameter family of disjoint monogonal trajectories $(\gamma_{l})_{L < l < L'}$ such that:
\begin{itemize}
    \item the interior angle of each trajectory of the family is the same;
    \item turning points of monogonal trajectories describe a straight line $\mathcal{L}$ in the flat surface as $l$ changes;
    \item $\mathcal{L}$ is the bisector of the interior angle for each trajectory $\gamma_{l}$.
\end{itemize}
Provided $\frac{L'}{L}> \frac{1}{\sin(\alpha/2)}$, notice that the flat surface which is swept by a family $(\gamma_{l})_{L < l < L'}$
contains an annulus $\mathcal{A}_{\theta,R,R'}$ (see Figure~\ref{fig:monogonal}) where:
\begin{itemize}
    \item $\theta=\pi-\alpha$;
    \item $R= \frac{L}{2\cos(\alpha/2)}$;
    \item $R'=\frac{L'\tan(\alpha/2)}{2}$.
\end{itemize}

\begin{figure}[!htbp]
	\centering
	\begin{minipage}{0.6\linewidth}
	\includegraphics[width=\linewidth]{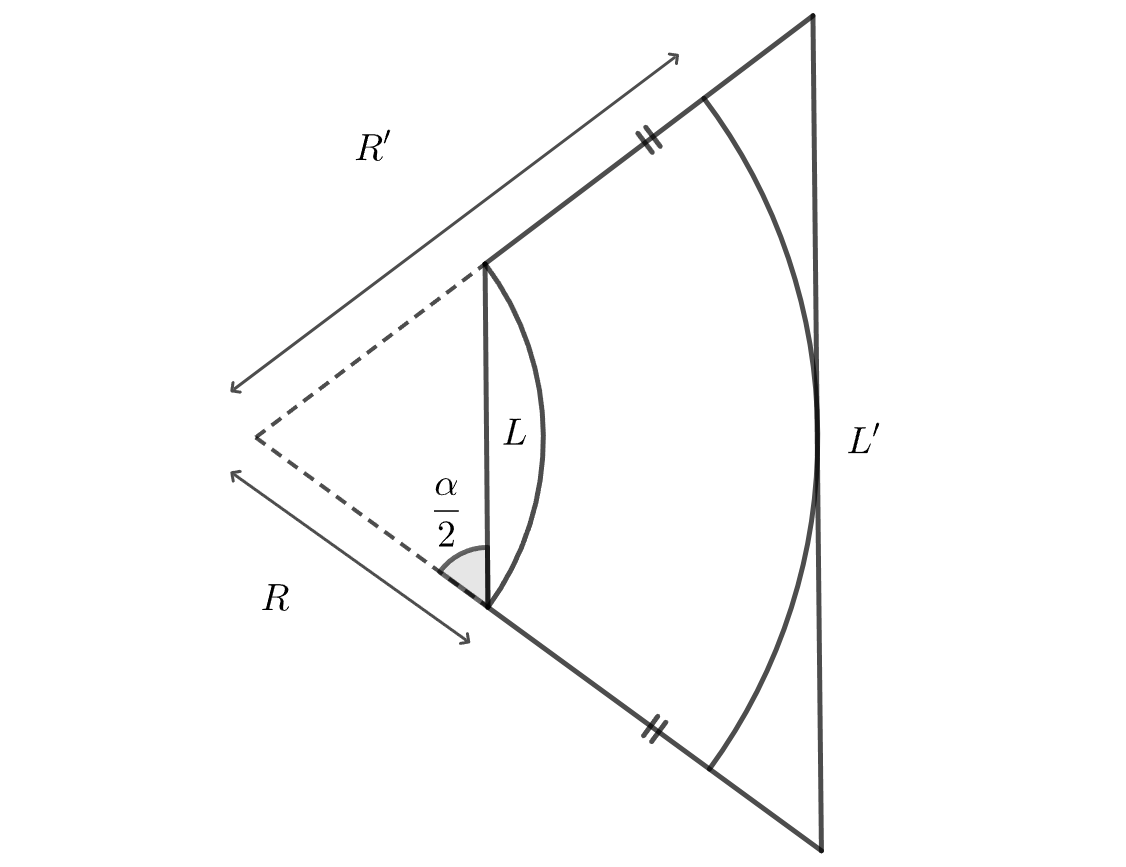}
	\footnotesize
	\end{minipage}
    \caption{The trapezoid in the figure defines a family of monogonal trajectories by identifying its legs. This family contains an annulus.}\label{fig:monogonal}
    \end{figure}

\subsection{Counting saddle connections in a flat annulus}\label{sub:example}

This section is dedicated to explaining why the curvature gap is necessary for our estimates in Theorem~\ref{thm:MAIN} and Theorem~\ref{thm:MAIN2}.
\par
For $0 \leq R < R'$ and $0 < \theta< \pi$, we consider a flat annulus $\mathcal{A}_{R,R',\theta}$ embedded in a flat sphere such that each boundary arc contains exactly one singularity. We will also assume that these singularities $A,B$ belong to the same radial leaf.

\begin{prop}\label{prop:almost}
Flat annulus $\mathcal{A}_{R,R',\theta}$ contains at least $\arccos{(\frac{R}{R'})}\cdot\frac{1}{\theta}$ simple saddle connections.
\end{prop}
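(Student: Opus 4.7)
The plan is to pass to the universal cover of $\mathcal{A}_{R,R',\theta}$, describe candidate saddle connections from $A$ to $B$ as Euclidean segments between lifts, and determine which of them survive inside the annular region.

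I would start by developing the universal cover of the annulus in polar coordinates as $[R, R'] \times \mathbb{R}$ with flat metric $dr^2 + r^2\,d\phi^2$; the deck transformation acts by $\phi \mapsto \phi + \theta$. Because $A$ and $B$ are assumed to lie on the same radial leaf, their lifts form the two families $A_k = (R, k\theta)$ and $B_k = (R', k\theta)$ for $k \in \mathbb{Z}$. Fixing the reference lift $A_0 = (R,0)$, each integer $k$ designates a candidate trajectory, namely the straight Euclidean segment from $A_0$ to $B_k$, read in an isometric embedding into $\mathbb{C}$ of any angular slab of width less than $2\pi$ containing both endpoints. This segment descends to a saddle connection in $\mathcal{A}_{R,R',\theta}$ precisely when it remains inside the annular region $R \le |z| \le R'$.

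The heart of the argument is to determine the admissible values of $k$. Parametrising the segment from $A_0$ to $B_k$ by $t \in [0,1]$, the squared distance $r^2(t)$ from the origin is a convex quadratic in $t$; its maximum on $[0,1]$ is therefore attained at an endpoint and bounded by $R'^2$. The only real obstruction is on the inner side, and a direct computation shows that the derivative of $r^2$ at $t=0$ equals $2R(R'\cos(k\theta) - R)$. It follows that $r(t)$ is non-decreasing on $[0,1]$ if and only if $\cos(k\theta) \ge R/R'$, equivalently $|k\theta| \le \arccos(R/R')$; moreover, in this regime the radius is strictly increasing on $(0,1]$. Since $\arccos(R/R') < \pi/2$, the admissible segments are entirely contained in a single isometric chart of the universal cover into $\mathbb{C}$, so no subtleties arising from the covering construction need to be addressed.

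To conclude, for each admissible $k$ the corresponding trajectory starts on the inner boundary arc with strictly increasing radius. By the dichotomy recalled in Section~\ref{sub:trajannulus}, it is automatically simple and terminates on the outer boundary arc precisely at $B$. Distinct integers $k$ produce paths with different winding numbers around the center of the annulus, hence distinct simple saddle connections. Counting the integers in $[-\arccos(R/R')/\theta,\,\arccos(R/R')/\theta]$ gives at least $2\lfloor \arccos(R/R')/\theta \rfloor + 1 \ge \arccos(R/R')/\theta$ simple saddle connections, as claimed. The main conceptual point is the convexity argument pinning down the threshold $\arccos(R/R')$; once in hand, both the counting and the appeal to Section~\ref{sub:trajannulus} for simplicity are immediate.
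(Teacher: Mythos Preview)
Your proof is correct and follows essentially the same approach as the paper: both develop the annulus in its universal cover, identify the critical angle $\arccos(R/R')$ governing which straight segments between lifts of $A$ and $B$ remain in the annular band, and count the resulting family. The paper fixes $B$ and reaches the threshold geometrically via the tangent line from $B$ to the inner circle, while you fix $A_0$ and reach it analytically via convexity of $r^2(t)$; you also count both signs of $k$, yielding a slightly larger family, but the core argument is the same.
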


For a flat sphere $X$ containing a flat annulus $\mathcal{A}_{R,R',\theta}$, Lemma~\ref{lem:monogon} proves that apex angle $\theta$ satisfies a lower bound $\theta \geq 2\pi\delta$ where $\delta$ is the curvature gap $\delta$ of $X$. 
When the apex angle $\theta$ goes to zero, the curvature gap of the surface goes to zero and the number and the length of simple saddle connections tend to infinity. This indicates why the curvature gap $\delta$ is necessary in the upper bound of Theorem~\ref{thm:MAIN}.
\par
Taking the limit $\theta \rightarrow 0$ while bounding the conformal modulus of the flat annulus, we obtain in the limit a flat cylinder containing infinitely many saddle connections with arbitrarily large lengths. In a flat sphere, this can only happen if $\delta=0$.

\begin{proof}[Proof of Proposition~\ref{prop:almost}]
We develop counterclockwise the flat annulus and denote by $A_{0}, A_{1},\dots,A_{k}$ the copies of $A$ as in Figure~\ref{fig:unfolding}.
\par
We define $I$ as the tangency point for a line $IB$ incident to $B$ and tangent to the circle of radius $R$. Defining $\beta$ as $\angle IOB$, $I$ belongs to the circular arc between two successive copies $A_{m}$ and $A_{m+1}$ of $A$, see Figure~\ref{fig:unfolding}.
\par
Since $\cos(\beta)=\frac{R}{R'}$, we have $m\theta \leq \arccos(\frac{R}{R'}) \leq (m+1)\theta$. It follows that segments $[BA_{0}],\dots,[BA_{m}]$ form a family of $\arccos(\frac{R}{R'})\cdot\frac{1}{\theta}$ simple saddle connections.

    \begin{figure}[!htbp]
	\centering
	\begin{minipage}{1\linewidth}
	\includegraphics[width=\linewidth]{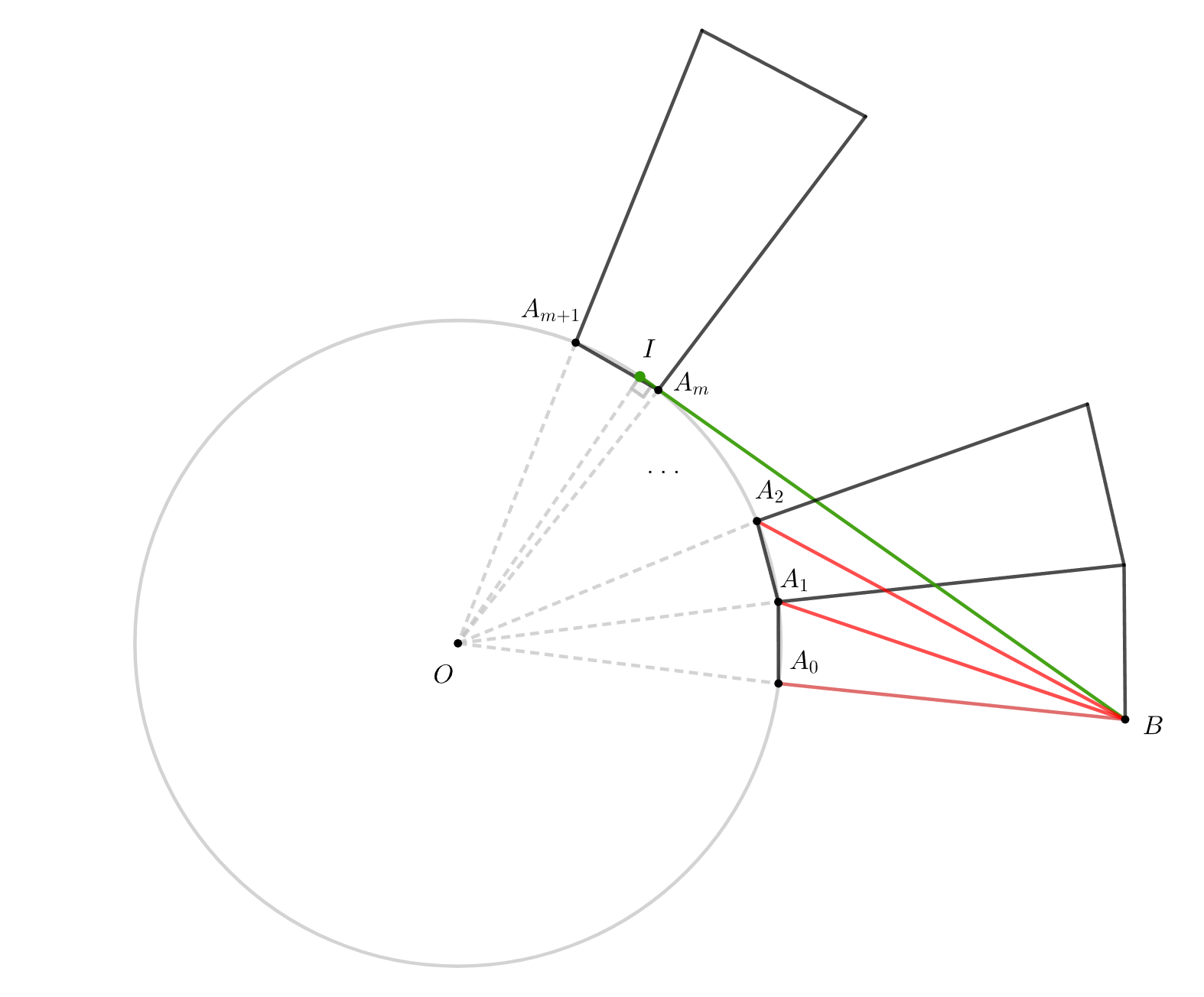}
        \caption{An unfolding of the almost cylinder, together with the sequence of saddle connections $[BA_0], \dots, [BA_m]$. The segment $IB$ is tangent to the circle at the point $I$.}\label{fig:unfolding}
	\footnotesize
	\end{minipage}
    \end{figure}
\end{proof}

\section{Geometry of immersed disks}\label{sec:Delaunay}

\subsection{Locally isometric immersions of disks}\label{sub:isoimmersion}

In the study of a flat surface, a crucial geometric quantity is the size of the largest open disk we can immerse or embed in the surface without hitting the conical singularities.

\begin{prop}\label{prop:immersion}
If there is a locally isometric immersion $f$ of an open disk of radius $\lambda >\frac{1}{\sqrt{\pi}}$ in a flat sphere $X$ of unit area, then $X$ contains an embedded flat annulus $\mathcal{A}_{R,R',\theta}$ where:
\begin{itemize}
    \item the apex angle $\theta$ satisfies $\theta < \frac{\pi}{4\pi\lambda^{2}-4}$;
    \item the conformal modulus $\mu$ satisfies $\mu > \sqrt{\pi\lambda^{2}-1}$.
\end{itemize}
The degenerate case of a translation cylinder corresponds to a flat annulus with angle $\theta=0$.
\end{prop}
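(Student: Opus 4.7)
Since $\pi\lambda^{2} > 1 = \operatorname{area}(X)$, the locally isometric immersion $f \colon D \to X$ of the open Euclidean disk $D$ of radius $\lambda$ cannot be injective. The strategy is to locate the ``first'' self-identification of $f$ and use the rotational structure at it to carve out a flat annulus embedded in $X$. Let $c$ denote the center of $D$ and set
\[
R \;=\; \sup\bigl\{\, r \leq \lambda \;:\; f|_{D_{r}} \text{ is injective} \,\bigr\},
\]
where $D_{r} \subset D$ is the concentric open sub-disk of radius $r$. Since $f|_{D_{R}}$ embeds a Euclidean disk of area $\pi R^{2}$ into $X$, we have $R \leq 1/\sqrt{\pi} < \lambda$. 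A standard compactness argument then furnishes two distinct points $p_{1}, p_{2} \in \partial D_{R}$ with $f(p_{1}) = f(p_{2})$.

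Let $\theta = \angle p_{1}cp_{2} \in \,]0, 2\pi]$ and let $P \subset D$ be the closed angular sector at $c$ of apex angle $\theta$ delimited by the radii through $p_{1}$ and $p_{2}$. Because $f$ is a local isometry sending $p_{1}$ and $p_{2}$ to the same point, it identifies a neighborhood of $p_{1}$ with a neighborhood of $p_{2}$ via the unique orientation-preserving Euclidean rotation (around the common image) matching the radial germ from $c$ through $p_{1}$ to that from $c$ through $p_{2}$; a Euclidean rotation being determined by its action on any single non-fixed point, this identification extends along the whole pair of radial boundaries of $P$. Passing to the quotient, $f|_{P}$ descends to a locally isometric map on the flat sector obtained by gluing the two radial sides of $P$, which is by definition a flat annulus $\mathcal{A}_{0, \lambda, \theta}$. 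Its sub-annulus $\mathcal{A}_{R, \lambda, \theta}$ is then the candidate embedded annulus, with $R' = \lambda$. The minimality built into the definition of $R$ forbids any further self-identification inside this sub-annulus: a pair of identified interior points would, by a short continuity argument, yield identified points in some $D_{r}$ with $r < R$, contradicting the maximality of $R$.

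To extract the bounds, one uses that the area $\tfrac{\theta(\lambda^{2} - R^{2})}{2}$ of the embedded annulus is at most $1$, combined with $R^{2} \leq 1/\pi$, which yields a bound on $\theta$ of the same order $1/(\pi\lambda^{2} - 1)$ as claimed; the lower bound on the conformal modulus $\mu = \ln(\lambda/R)/\theta$ then follows from the corresponding lower bound on $\ln(\lambda/R)$ and the upper bound on $\theta$. Sharpening these estimates into the exact constants in the statement (the factor $\tfrac{1}{4}$ in $\theta$ and the exponent $\tfrac{1}{2}$ in $\mu$) seems to require a more refined area accounting in $X$, accounting jointly for the embedded sub-disk of radius $R$ and the annulus, and for the area they cover in $X$ with controlled overlap. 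The main obstacle is precisely this combined bookkeeping: first, verifying rigorously that the identification of the two radial sides of $P$ propagates consistently from $p_{1}, p_{2}$ all the way to the boundary of $D$ while giving an \emph{embedded} annulus (and not merely an immersed one); and second, squeezing the correct numerical constants out of area estimates that are naturally off by a small multiplicative factor if handled naively.
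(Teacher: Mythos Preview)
Your starting point---the maximal embedded sub-disk $D_{R}$ and a pair of identified boundary points $p_{1},p_{2}$---matches the paper, but the construction of the annulus from the radial sector $P$ is wrong. The identification isometry $g=(f|_{U_{2}})^{-1}\circ f|_{U_{1}}$ sends $p_{1}$ to $p_{2}$, yet it has no reason to carry the radial direction at $p_{1}$ to the radial direction at $p_{2}$. For that to happen, the two geodesic segments $f([c,p_{1}])$ and $f([c,p_{2}])$ in $X$ would have to arrive at $q=f(p_{1})=f(p_{2})$ with the same tangent vector, hence coincide, forcing $p_{1}=p_{2}$ by injectivity of $f|_{D_{R}}$. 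Concretely, if the local picture near $q$ is that of a flat cylinder of waist $2R$, then $p_{1},p_{2}$ are antipodal on $\partial D_{R}$, the isometry $g$ is a pure translation, and the two radial sides of your sector $P$ map to disjoint parallel rays in $X$; no annulus centered at $c$ is produced at all. So the object $\mathcal{A}_{R,\lambda,\theta}$ you describe simply does not exist in $X$ in general.

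What maximality of $R$ does force is that $g$ sends the \emph{tangent line} $T_{p_{1}}$ to $\partial D_{R}$ onto the tangent line $T_{p_{2}}$ (otherwise $g(T_{p_{1}})$ would enter $D_{R}$ and violate injectivity there). Thus $g$ is a rotation about the intersection point $N=T_{p_{1}}\cap T_{p_{2}}$, and $N$ must lie outside the immersed disk $\mathcal{D}$ since $f(\mathcal{D})$ contains no conical singularity. The paper's annulus is centered at $N$, not at $c$: its apex angle is $\theta=\pi-2\alpha$ where $2\alpha=\angle p_{1}cp_{2}$, and its radii are $R'=\rho\tan\alpha+\sqrt{\lambda^{2}-\rho^{2}}$ and $R=\rho\tan\alpha-\sqrt{\lambda^{2}-\rho^{2}}$ (writing $\rho$ for your $R$), so that $R'-R=2\sqrt{\lambda^{2}-\rho^{2}}$ and $R'+R=2\rho\tan\alpha$. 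With these expressions the area bound gives the stated inequality on $\theta$ directly, and the modulus bound follows from an elementary estimate on $\ln(R'/R)/(\pi-2\alpha)$. The obstacle you identify in reaching the constants is not a bookkeeping issue but a symptom of having located the annulus at the wrong center.
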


\begin{proof}
We identify the domain of immersion $f$ with the centered disk $\mathcal{D}$ of radius $\lambda$ in the complex plane and set $M=f(0)$. Since $f$ is locally injective, $M$ is the center of a maximal embedded disk $\mathcal{D}^{o}$ in $X$. The area of $\mathcal{D}^{o}$ is bounded by the area of $X$ so the radius $\rho$ of $\mathcal{D}^{o}$ satisfies $\rho < \frac{1}{\sqrt{\pi}}$. In particular, we have $\lambda>\rho$ and $\mathcal{D}^{o}$ is contained in $\mathcal{D}$.
\par
We deduce from the maximality hypothesis on $\mathcal{D}^{o}$ that two points $A,B$ of $\partial \mathcal{D}^{o}$ have the same image under the immersion. Without loss of generality, we suppose that the mediatrix of the segment $[A,B]$ contains the horizontal diameter of disk $\mathcal{D}$. We locate points $A,B$ with corresponding complex numbers $\rho e^{i\alpha}$ and $\rho e^{-i\alpha}$ respectively for $\alpha \in ]0,\frac{\pi}{2}]$ see Figure~\ref{fig:Immersion}. We first assume that $\alpha \in ]0,\frac{\pi}{2}[$
\par
\begin{figure}[!htbp]
	\centering
	\begin{minipage}{0.75\linewidth}
	\includegraphics[width=\linewidth]{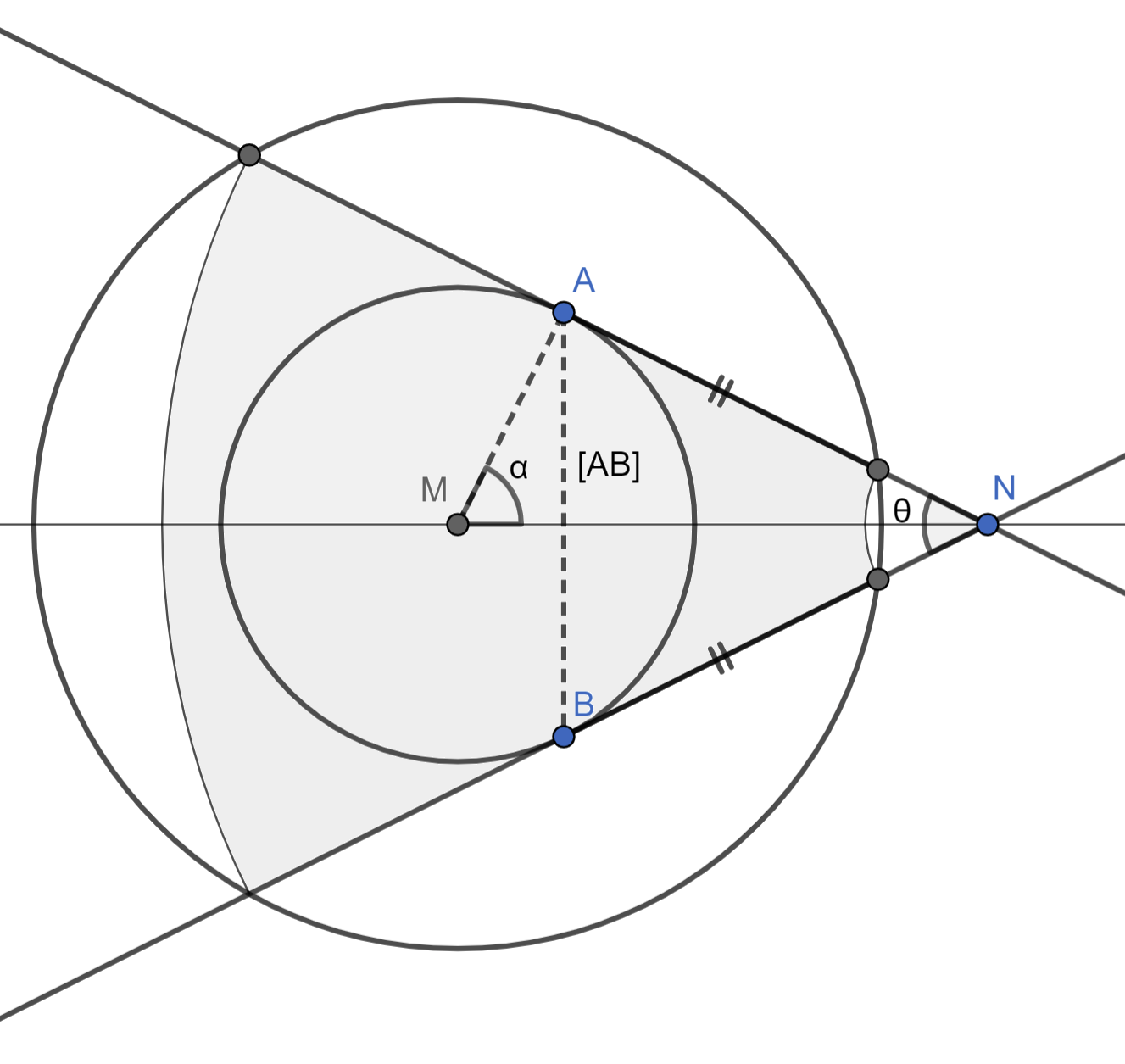}
        \caption{Delaunay disk $\mathcal{D}$, embedded disk $\mathcal{D}^{o}$, monogonal trajectory $[AB]$ and the flat annulus in grey.}\label{fig:Immersion}
	\footnotesize
	\end{minipage}
\end{figure}
A neighborhood of $B$ in the tangent line $T_{B}$ of $\partial \mathcal{D}^{o}$ of $B$ has the image under $f$ as some segment containing $A$. Since $\mathcal{D}^{o}$ is the maximal embedded disk centered at $M$, this segment has also to be contained in the tangent line $T_{A}$ to $\partial \mathcal{D}^{o}$. It follows that segment $[AB]$ belongs to a family of vertical monogonal trajectories (see Section~\ref{sub:monogon}) with turning points located at these identified tangent lines.
\par
This family of curves can be continued on $X$ until reaching a conical singularity. Since $f(\mathcal{D})$ cannot contain any conical singularity, it follows that the intersection point $N$ of these two tangent lines is outside $\mathcal{D}$, see Figure~\ref{fig:Immersion}. Identify point $N$ with the real number $x$ of the complex plane, we have $x \geq \lambda$.
\par
Each of the two identified points between tangent lines $T_{A},T_{B}$ is connected by a circular arc of the center $N$ contained in disk $\mathcal{D}$. We denote by $R<R'$ the radii of these arcs. The images under $f$ of these two circular arcs bound a flat annulus $\mathcal{A}_{R,R',\theta}$ where the apex angle $\theta$ satisfies $\theta = \pi -2\alpha$.
\par
The area $A=\theta(R'+R)(R'-R)$ of this annulus is bounded by the total area of the surface. Since we have $R'-R=2\sqrt{\lambda^{2}-\rho^{2}}$ and $\rho^{2}>\frac{1}{\pi}$, we deduce the inequality $A>4\theta(\lambda^{2}-\frac{1}{\pi})$. It follows from $A<1$ that $\theta < \frac{\pi}{4\pi\lambda^{2}-4}$.
\par
It remains to give a lower bound on the conformal modulus of $\mathcal{A}_{R,R',\theta}$. In fact, the modulus is $\frac{ln(R'/R)}{\theta}$. We know $\theta=\pi-2\alpha$. Since $R'-R=2\sqrt{\lambda^{2}-\rho^{2}}$ and $R'+ R=2\rho\tan{\alpha}$, we have $R'=\rho\tan{\alpha} + \sqrt{\lambda^2-\rho^2}$ and $R=\rho\tan{\alpha} - \sqrt{\lambda^2-\rho^2}$. Hence, the conformal modulus $\mu$ of $\mathcal{A}_{R,R',\theta}$ is given by:

$$\mu = \frac{\ln\left({\frac{\rho\tan{\alpha} + \sqrt{\lambda^2-\rho^2}}{\rho\tan{\alpha} - \sqrt{\lambda^2-\rho^2}}}\right)}{\pi-2\alpha}.$$

According to the inequality $\ln(1+x)\ge\frac{x}{1+x}$ for $x>-1$, we know that:

$$\frac{\ln\left({\frac{\rho\tan{\alpha} + \sqrt{\lambda^2-\rho^2}}{\rho\tan{\alpha} - \sqrt{\lambda^2-\rho^2}}}\right)}{\pi-2\alpha}\ge \frac{2\sqrt{\lambda^2-\rho^2}}{\rho\tan{\alpha}-\sqrt{\lambda^2-\rho^2}}\cdot \frac{1}{\pi-2\alpha}.$$

As $\alpha$ increases, the right side of the inequality decreases. In the limit $\alpha \to \frac{\pi}{2}$, the flat annulus becomes a flat cylinder and we obtain:
$$\frac{2\sqrt{\lambda^2-\rho^2}}{\rho\tan{\alpha}-\sqrt{\lambda^2-\rho^2}} \cdot \frac{1}{\pi-2\alpha} \geq \frac{\sqrt{\lambda^2-\rho^2}}{\rho}.$$
It follows then from inequality $\rho<\frac{1}{\sqrt{\pi}}$ that the conformal modulus $\mu$ of $\mathcal{A}_{R,R',\theta}$ is strictly larger than $\sqrt{\pi\lambda^{2}-1}$.
\par
In the limit case where $\alpha=\frac{\pi}{2}$, $T_{A}$ and $T_{B}$ intersect at infinity. The segment $[AB]$ is a simple closed geodesic that belongs a family of vertical closed geodesics forming a translation cylinder which identifies with a flat annulus with an apex angle $\theta=0$. The family of vertical closed geodesics sweeps out a rectangle of width $2\sqrt{\lambda^2-\rho^2}$ and height $2\rho$ so the conformal modulus of the translation cylinder is larger than $\frac{\sqrt{\lambda^2-\rho^2}}{\rho}$. Inequality $\mu > \sqrt{\pi\lambda^{2}-1}$ also holds in that case.
\end{proof}

\begin{rmk}
An interesting geometric result due to Masur and Smillie (Corollary 5.5 in \cite{MS91}) proves that a translation surface of large diameter contains a long flat cylinder. Proposition~\ref{prop:immersion} provides the analog result for flat spheres (and in fact flat surfaces of arbitrary genus since the proof relies on no topological assumption), see \cite{Thu} for discussion of flat spheres near the boundary of the moduli space.    
\end{rmk}

\subsection{Delaunay triangulations}\label{sub:Delaunay}

In any flat sphere $X$ (and more generally any flat surface with conical singularities), the \textit{Voronoi cell} of a singularity $M$ is the polygonal domain in $X$ consisting of points that are closer (with respect to the flat metric) to $M$ than to any other conical singularity of $X$. This defines the \textit{Voronoi tessellation} of $X$.
\par
The \textit{Delaunay polygonation} is a construction dual to the Voronoi tessellation. We consider locally isometric immersions of disks into $X$. When at least three points of the boundary circle are mapped to conical singularities of $X$, these disks are called \textit{Delaunay disks} and the immersion of the convex hull of these singularities into $X$ is called a \textit{Delaunay polygon}. They form a decomposition of $X$ dual to the Voronoi tessellation, see Proposition~3.1 of \cite{Thu} for details on these constructions.
\par
If more than three conical singularities of $X$ are cocyclic, some Delaunay polygons may be convex polygons with more than three sides. A \textit{Delaunay triangulation} is any subdivision of the Delaunay polygonation into triangles. We refer to edges of such a triangulation as \textit{Delaunay edges}.
\par
The following lemma uses the curvature gap to bound the length of Delaunay edges on any flat sphere.

\begin{lem}\label{lem:Delaunay}
In a flat sphere $X$ of unit area with $n$ conical singularities and a curvature gap $\delta>0$, the length $L$ of any edge of a Delaunay triangulation of $X$ satisfies $L^{2} < \frac{4}{\pi}+\frac{1}{2\pi\delta}$.
\end{lem}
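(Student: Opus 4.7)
The plan is to apply Proposition~\ref{prop:immersion} to the Delaunay disk associated with the edge, and then use the curvature gap lower bound on the apex angle that comes from Lemma~\ref{lem:monogon} to trap the radius of the Delaunay disk.

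First, I would fix a Delaunay edge $e=[AB]$ of metric length $L$ with endpoints $A,B$ conical singularities. By construction, $A$ and $B$ lie on the boundary of an immersed Delaunay disk of some radius $\lambda>0$, so the triangle inequality gives the elementary bound $L\leq 2\lambda$. The strategy is then to bound $\lambda^2$ in terms of $\delta$ and the unit area of $X$.

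I would split into two cases according to whether the Delaunay disk is ``small'' or ``large''. In the easy case $\lambda\leq \frac{1}{\sqrt{\pi}}$, the immersed disk has area at most one (consistent with $\mathrm{Area}(X)=1$) and we directly get $L^2\leq 4\lambda^2\leq \frac{4}{\pi}$, which is already smaller than the claimed bound. In the remaining case $\lambda>\frac{1}{\sqrt{\pi}}$, Proposition~\ref{prop:immersion} applies: the flat sphere $X$ contains an embedded flat annulus $\mathcal{A}_{R,R',\theta}$ with apex angle satisfying
\[
\theta<\frac{\pi}{4\pi\lambda^{2}-4}.
\]

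The key input is then Lemma~\ref{lem:monogon}. The flat annulus supplied by Proposition~\ref{prop:immersion} is swept out by a family of monogonal trajectories whose interior angle $\alpha$ is related to the apex angle by $\theta=\pi-\alpha$. Since the curvature gap of $X$ is $\delta>0$, the lemma forces $\alpha\leq \pi-2\pi\delta$, hence $\theta\geq 2\pi\delta$. Combining this lower bound with the upper bound coming from the area estimate yields
\[
2\pi\delta<\frac{\pi}{4\pi\lambda^{2}-4},
\]
which I would solve for $\lambda^{2}$ to obtain $\lambda^{2}<\frac{1}{\pi}+\frac{1}{8\pi\delta}$. Multiplying by $4$ and using $L\leq 2\lambda$ then gives the desired inequality $L^{2}<\frac{4}{\pi}+\frac{1}{2\pi\delta}$.

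I expect no serious obstacle: both inequalities are essentially handed to me by the preceding results, and the proof reduces to carefully identifying the apex angle of the annulus of Proposition~\ref{prop:immersion} with the complement of the interior angle controlled by Lemma~\ref{lem:monogon}. The only subtlety to check is the case $\lambda\leq \frac{1}{\sqrt{\pi}}$, where Proposition~\ref{prop:immersion} is unavailable but the bound is trivial from $L\leq 2\lambda$ alone.
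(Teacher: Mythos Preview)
Your proof is correct and follows essentially the same route as the paper: bound the Delaunay radius via Proposition~\ref{prop:immersion} and then use the curvature gap to force $\theta\geq 2\pi\delta$. The only cosmetic differences are that the paper argues by contradiction and derives $\theta\geq 2\pi\delta$ directly from Gauss--Bonnet applied to an orthoradial leaf of the embedded annulus rather than by invoking Lemma~\ref{lem:monogon} on the underlying monogonal trajectory.
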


\begin{proof}
Any Delaunay edge of length $L$ is contained in the image of a Delaunay disk $\mathcal{D}$ of radius $\lambda \geq \frac{L}{2}$ by a locally isometric immersion $f$. If $\lambda \leq \frac{1}{\sqrt{\pi}}$, then $L^{2} \leq \frac{4}{\pi} < \frac{4}{\pi}+\frac{1}{2\pi\delta}$.
\par
If, instead, we have $\lambda > \frac{1}{\sqrt{\pi}}$, it follows that from Proposition~\ref{prop:immersion} that the image of $\mathcal{D}$ contains a flat annulus of apex angle $\theta < \frac{\pi}{4\pi\lambda^{2}-4}$. Orthoradial leaves of this flat annulus (see Section~\ref{sec:annuli}) decompose $X$ into two connected components whose total curvatures are respectively $\frac{2\pi-\theta}{2\pi}$ and $\frac{2\pi+\theta}{2\pi}$. It follows then from the definition of the curvature gap that we have $\theta \geq 2\pi\delta$ and thus we get $\lambda^{2} < \frac{1}{\pi} + \frac{1}{8\pi\delta}$. The bound on $L$ follows.
\end{proof}

\subsection{Configurations of chords in Delaunay disks}\label{sub:Configurations}

Locally isometric immersions of disks provide also our main tool to control the complexity of simple trajectories in a flat sphere.
\par
Consider a Delaunay triangle $\mathcal{T}$ in a flat sphere $X$, together with the locally isometric immersion $f: \mathcal{D} \rightarrow X$ of the corresponding Delaunay disk $\mathcal{D}$. Note that for a simple saddle connection $t$, its endpoints do not belong to the image of the interior of a Delaunay disk by a locally isometric immersion $f$. It follows that $f^{-1}(t)$ is formed by chords of the disk $\mathcal{D}$. Since $t$ is simple, these chords do not intersect each other. We study the configuration of these disjoint chords in $\mathcal{D}$.

Let $c_1$ and $c_2$ be two disjoint chords in a disk $\mathcal{D}$. We denote by $I(c_1,c_2)$ the closed domain in $\mathcal{D}$ bounded by $c_1$ and $c_2$. 
We define
$$
\beta(c_1,c_2) : = \frac{|\partial \mathcal{D} \cap I|}{r},
$$
where $|\partial \mathcal{D} \cap I|$ denotes the arc-length and $r$ denotes the radius of $\mathcal{D}$. In other words, $\beta(c_1,c_2)$ is the sum of the central angles corresponding to the arcs in $\partial \mathcal{D} \cap I$.

We say that $c_1$ and $c_2$ are \emph{consecutive in $\mathcal{T}$} if they are the only chords of $f^{-1}(t)$ contained in the domain $I(c_1, c_2)$.

\begin{lem}\label{lem:chords}
In a flat sphere $X$ with positive curvature gap $\delta>0$, we consider a Delaunay triangle $\mathcal{T}$ and the locally isometric immersion $f: \mathcal{D} \rightarrow X$ of its Delaunay disk. Let $t$ be a simple saddle connection, and let $c_1, c_2, c_3 $ be three chords in $f^{-1}(t)$ that intersect $\mathcal{T}$. The parametrization of $t$ induces orientations on $c_i$ for each $i$. Then:
\begin{enumerate}
    \item If $c_1$ and $c_2$ are consecutive in $\mathcal{T}$ with the same orientation, then $\beta(c_1, c_2) \geq 4\pi\delta$.
    \item If $(c_1,c_2)$ and $(c_2,c_3)$ are two consecutive pairs of chords, then $\beta(c_1,c_3) = \beta(c_1, c_2) + \beta(c_2, c_3) \geq 4\pi\delta$.
\end{enumerate}
\end{lem}

\begin{proof}
We define $\theta_i$ as the angle obtained by rotating the rightward-pointing horizontal unit vector counterclockwise to the tangent vector of $c_i$. Thus, $\theta_i$ lies in the interval $[0, 2\pi)$. For each $c_i$, we choose a point $A_i\in c_i\cap\mathcal{T}$.

\noindent\underline{Proof of~(1):}

There exists a permutation $\sigma \in \mathfrak{S}_2$ such that the order of the chords $(c_{\sigma(1)}, c_{\sigma(2)})$ is induced by the parametrization of $t$. Up to reversing the orientation of $t$, we may assume that $\sigma = \mathrm{id}$.

Inside $\mathcal{T}$, we draw an oriented segment $\gamma$ from $A_{2}$ to $A_{1}$. 
Since $I(c_1,c_2)$ contains no other chords, the segment $\gamma$ and the portion of the simple trajectory $t$ from $A_1$ to $A_2$ form a closed polygonal loop with no self-intersection on $X$. This loop divides $X$ into two domains. Denote by $P$ the domain whose interiors angles are $\alpha_1$ and $\alpha_2$, where $\alpha_i$ is defined as in Figure~\ref{fig:twochordsameorientation}. Applying the Gauss-Bonnet formula to the domain $P$, we obtain that
\begin{equation}\label{equ:case1gb}
    2\pi k + \pi-\alpha_1 + \pi-\alpha_2= 2\pi,
\end{equation}
where $k$ is the sum of the discrete curvatures in $P$. 

\begin{figure}[!htbp]
	\centering
	\begin{minipage}{\linewidth}
	\includegraphics[width=\linewidth]{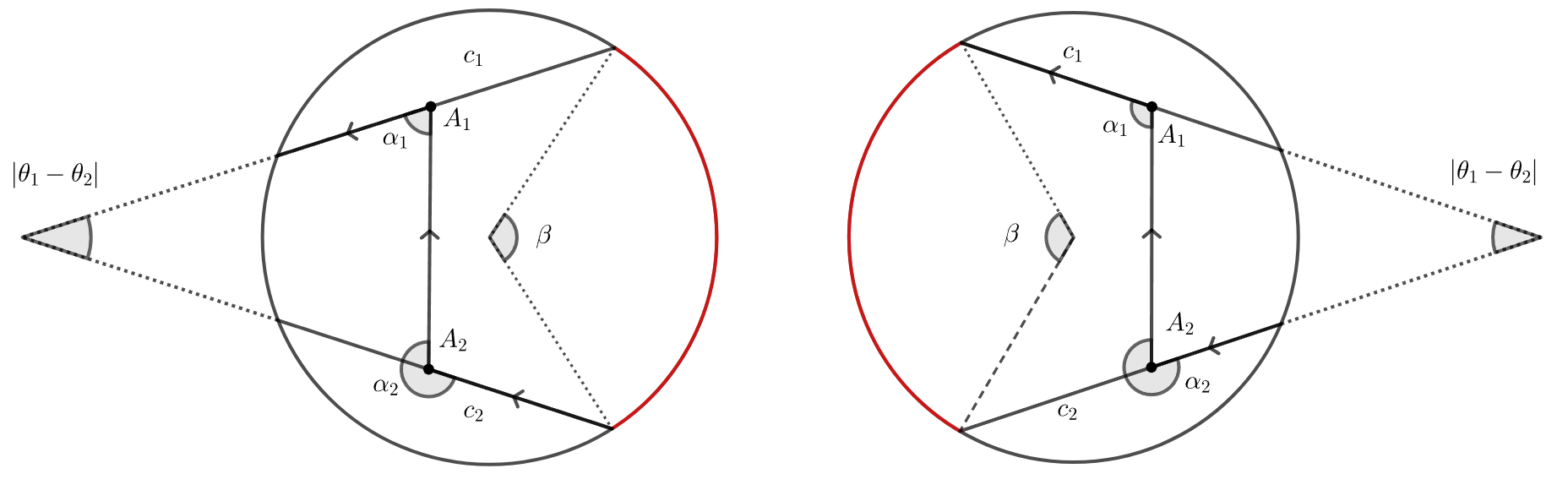}
	\footnotesize
	\end{minipage}
    \caption{Delaunay disk $\mathcal{D}$, two cases of two consecutive chords $c_{1},c_{2}$ with same orientations.}\label{fig:twochordsameorientation}
\end{figure}

As shown in Figure~\ref{fig:twochordsameorientation}, there are two possible cases.
\begin{itemize}
    \item If $|\theta_1-\theta_2|\leq \pi$ (the left picture), then the identity~\eqref{equ:case1gb} simplifies to $|\theta_1 - \theta_2| = 2\pi (1-k) \geq 2\pi\delta$. By the inscribed angle theorem, the angle $\beta$ in the picture is at least $4\pi\delta$. It follows that $\beta(c_1, c_2) \geq 4\pi\delta$.
    \item If $|\theta_1-\theta_2|> \pi$ (the right picture), then identity~\eqref{equ:case1gb} simplifies to $2\pi - |\theta_1 - \theta_2| = 2\pi (k - 1) \geq 2\pi\delta$. Again, by the inscribed angle theorem, $\beta(c_1, c_2) \geq \beta \geq 4\pi\delta$.
\end{itemize}

\noindent\underline{Proof of~(2):}
Before proving the case of three consecutive chords, we first analyze a complementary case to case~(1).\newline
\noindent\textbf{Step~1:} Consecutive $c_1$ and $c_2$ but with different orientations.

As in the proof above, we may assume that the trajectory $t$ passes through $c_1$ before $c_2$.

Inside $\mathcal{T}$, we draw an oriented segment $\gamma$ from $A_{2}$ to $A_{1}$. The segment $\gamma$ and the portion of the simple trajectory $t$ from $A_1$ to $A_2$ form a closed polygonal loop with no self-intersection on $X$. Denote by $P$ the domain whose interior angles $\alpha_1$ and $\alpha_2$ are strictly less than $\pi$, where $\alpha_i$ is defined as in Figure~\ref{fig:twochordoppositeorientation}. Applying the Gauss-Bonnet formula to the domain $P$, we obtain that
\begin{equation}\label{equ:case2gb}
    2\pi k + \pi-\alpha_1 + \pi-\alpha_2 = 2\pi,
\end{equation}
where $k$ is the sum of the discrete curvatures in $P$. 

\begin{figure}[!htbp]
	\centering
	\begin{minipage}{\linewidth}
	\includegraphics[width=\linewidth]{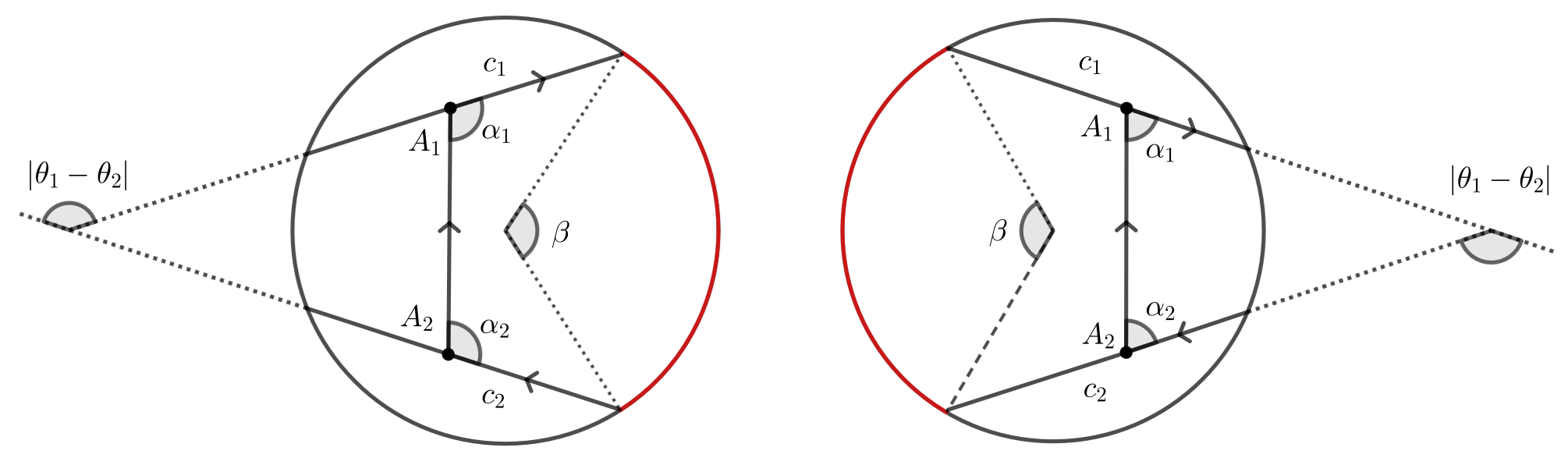}
	\footnotesize
	\end{minipage}
    \caption{Delaunay disk $\mathcal{D}$, two cases of two consecutive chords $c_{1},c_{2}$ with different orientations.}\label{fig:twochordoppositeorientation}
\end{figure}

As shown in Figure~\ref{fig:twochordoppositeorientation}, there are two possible cases.
\begin{itemize}
    \item \textbf{Case~1}: If $|\theta_1-\theta_2|\leq \pi$ (the left picture), then the identity~\eqref{equ:case2gb} simplifies to $|\theta_1 - \theta_2| = 2\pi (1-k)$. By the inscribed angle theorem, we have that $\beta \geq 2\big(\pi - |\theta_1 - \theta_2|\big) = 4\pi\Big(k - \frac{1}{2}\Big)$. In particular, since $\pi - |\theta_1 - \theta_2|\geq0$, it follows that $k-\frac{1}{2}\geq0$.
    \item \textbf{Case~2}: If $|\theta_1-\theta_2|> \pi$ (the right picture), then identity~\eqref{equ:case2gb} simplifies to $|\theta_1 - \theta_2| = 2\pi (1 - k)$. In particular, $1-k > 0$. By the inscribed angle theorem, we have that $\beta \geq 2\big(|\theta_1 - \theta_2| - \pi\big) = 4\pi\Big(\frac{1}{2} - k\Big)$. In particular, since $|\theta_1 - \theta_2| - \pi\geq0$, it follows that $\frac{1}{2} - k\geq0$
\end{itemize}
Therefore, in either case, we conclude that
\begin{equation}\label{equ:case22}
    \beta \geq 4\pi \left| \frac{1}{2} - k \right|.
\end{equation}

\noindent\textbf{Step~2:} Consecutive chords $c_1, c_2, c_3$.

According to the conclusion~(1), it remains to consider the case where three consecutive chords have alternating orientations; see Figure~\ref{fig:threeconsecutive}. Let $\sigma \in \mathfrak{S}_3$ be a permutation such that the order of the chords $(c_{\sigma(1)}, c_{\sigma(2)}, c_{\sigma(3)})$ is induced by the parametrization of $t$. Up to reversing the global orientation of trajectory $t$, there are just two cases to consider:
\begin{itemize}
    \item $\sigma(1)<\sigma(3)<\sigma(2)$;
    \item $\sigma(1)<\sigma(2)<\sigma(3)$.
\end{itemize}

In the first case, by discarding the portion of $t$ following the chord $c_3$, we may apply the previously established assertion~(1) to conclude that $\beta(c_1, c_3) \geq 4\pi\delta$.

In the second case, for each $i = 1, 2$, the segment from $A_{i+1}$ to $A_i$, together with the portion of $t$ from $A_{i+1}$ to $A_i$, forms a simple polygonal loop $l_i$ in $X$. These two loops together form a figure-eight-shaped polygonal path on $X$. As in Step~1, let $P_i$ denote the domain enclosed by the loop $l_i$, which has interior angles strictly less than $\pi$. In particular, the interiors of $P_1$ and $P_2$ are disjoint.

Define the central angles $\beta$ and $\beta'$ as in Figure~\ref{fig:threeconsecutive}. By Step~1, we have
$$
\beta + \beta' \geq 4\pi\left|\tfrac{1}{2} - k\right| + 4\pi\left|\tfrac{1}{2} - k'\right| \geq 4\pi\left|1 - k - k'\right|,
$$
where $k$ and $k'$ denote the sums of the discrete curvatures in $P_1$ and $P_2$, respectively. Since $P_1$ and $P_2$ have disjoint interiors, the curvatures in $P_1$ and $P_2$ are at different singularities. The assumption of a curvature gap implies that $|1 - k - k'| \geq \delta$. Therefore, we obtain
$$
\beta(c_1, c_2) + \beta(c_2, c_3) \geq \beta + \beta' \geq 4\pi\delta.
$$
\end{proof}

\begin{figure}[!htbp]
	\centering
	\begin{minipage}{0.6\linewidth}
	\includegraphics[width=\linewidth]{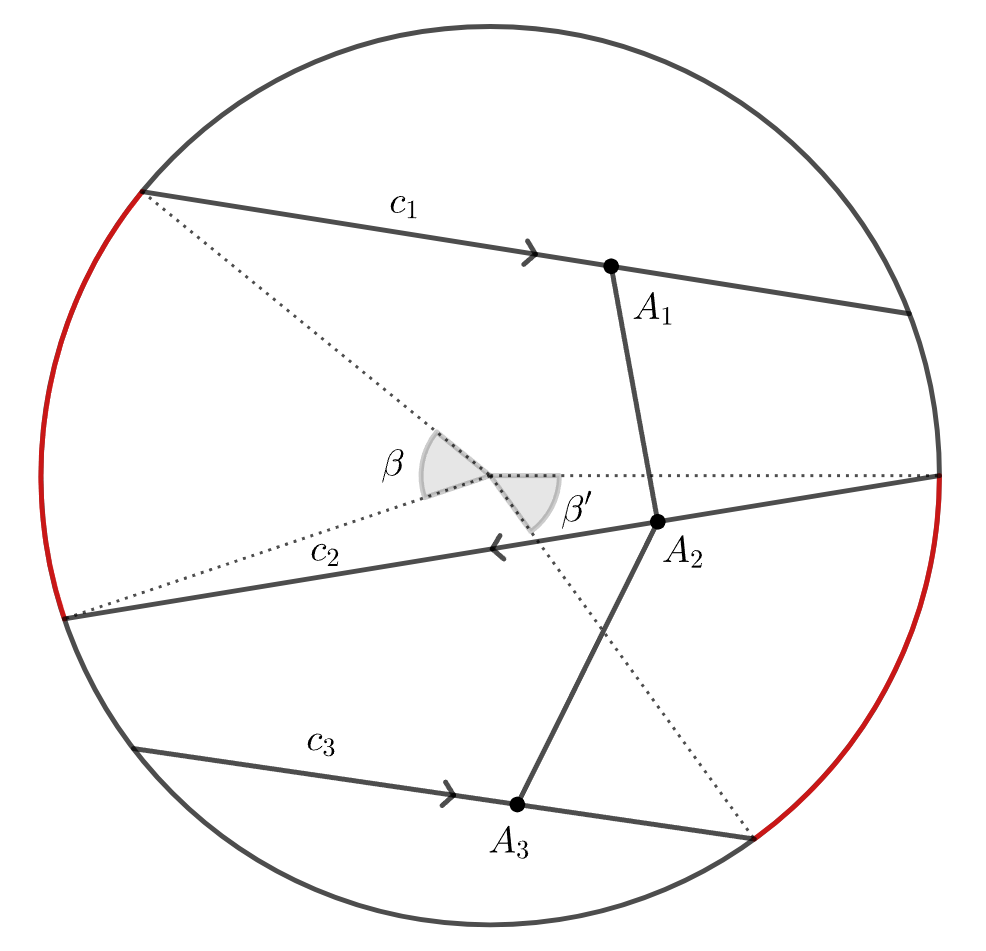}
	\footnotesize
	\end{minipage}
    \caption{Delaunay disk $\mathcal{D}$, three consecutive chords $c_{1},c_{2}, c_3$ with alternative orientations.}\label{fig:threeconsecutive}
\end{figure}

We deduce from Lemma~\ref{lem:chords} an upper bound on the number of chords in the intersection of a simple saddle connection and the Delaunay disk.

\begin{lem}\label{lem:disk}
In a flat sphere $X$, we consider a Delaunay triangle $\mathcal{T}$ and its Delaunay disk locally isometric immersion $f: \mathcal{D} \rightarrow X$.
\par
Assuming that the curvature gap $\delta$ of $X$ satisfies $\delta>0$, for any simple saddle connection $t$, the intersection $f^{-1}(t) \cap \mathcal{T}$ consists of at most $\frac{5}{2\delta}$ chords of disk $\mathcal{D}$.
\end{lem}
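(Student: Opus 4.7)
The plan is to combine the topological constraints imposed by the simplicity of $t$ with an angular-budget argument drawn from the curvature gap $\delta$.

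First, since $\mathcal{D}$ is simply connected and $f$ is a local isometry, every connected component of $f^{-1}(t)$ is a straight line segment that can only terminate on $\partial\mathcal{D}$, hence is a chord of $\mathcal{D}$. If two such chords met transversally at an interior point $p$, then $df|_{p}$, being an isomorphism of tangent spaces, would send their two distinct tangent directions at $p$ to two distinct tangent directions of $t$ at $f(p)$, contradicting the simplicity of $t$. Hence the chords of $f^{-1}(t)$ are pairwise interior-disjoint. Moreover, Delaunay triangles embed isometrically in $X$ via $f$ and $f(\mathcal{T})$ is simply connected, so all geodesic sub-segments of $t$ inside $f(\mathcal{T})$ share a common direction. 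Pulling back, the $N$ pieces of these chords lying inside $\mathcal{T}$ form a family of pairwise disjoint parallel straight segments, all crossing the same pair of sides of $\mathcal{T}$, say $AB$ and $AC$.

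Next, I would order the segments $\sigma_{\pi(1)},\dots,\sigma_{\pi(N)}$ along the parametrization of $t$. Between $\sigma_{\pi(j)}$ and $\sigma_{\pi(j+1)}$ the trajectory performs an excursion in $X\setminus f(\mathcal{T})$ joining an endpoint of $\sigma_{\pi(j)}$ to an endpoint of $\sigma_{\pi(j+1)}$. Closed up by an appropriate sub-arc of $\partial f(\mathcal{T})$, the excursion yields a simple closed loop $\gamma_{j}$ in $X$; its enclosed region contains a subset of the conical singularities of total discrete curvature $K_{j}$ satisfying $|K_{j}-1|\ge\delta$ by the definition of the curvature gap (the degenerate case $K_{j}=0$ is excluded because the preimage chords are pairwise disjoint in $\mathcal{D}$). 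Applying Gauss--Bonnet to $\gamma_{j}$ translates this into a lower bound on the turning of $t$ at the two corners of $\gamma_{j}$; concretely, each excursion contains, near one of the vertices $A$, $B$, $C$ of $\mathcal{T}$, a portion of a monogonal trajectory whose interior angle is at most $\pi-2\pi\delta$ by Lemma~\ref{lem:monogon}.

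Each excursion therefore consumes at least $2\pi\delta$ of an angular budget concentrated at the three vertices of $\mathcal{T}$. Since this total budget is a fixed multiple of $\pi$ (controlled by the external angles along $\partial f(\mathcal{T})$), summing the contributions of the $N-1$ excursions yields $N\le\frac{5}{2\delta}$. The main obstacle is precisely this last step: the combinatorial/angular accounting that distributes the $N-1$ turning contributions across the three vertices of $\mathcal{T}$, correctly handles the degenerate zero-curvature excursions, and yields the sharp constant $\tfrac{5}{2}$.
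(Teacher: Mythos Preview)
There is a fatal error early on: your claim that all the chords inside $\mathcal{T}$ are parallel is false. The fact that $f(\mathcal{T})$ is isometrically embedded and simply connected only tells you that each \emph{individual} visit of $t$ to $f(\mathcal{T})$ is a straight segment; it says nothing about how different visits compare. Between two visits, $t$ winds around conical singularities outside $f(\mathcal{T})$, and the holonomy of the flat structure along that excursion is a nontrivial rotation (its angle is $2\pi$ times the enclosed discrete curvature). Distinct chords therefore have genuinely different directions in $\mathcal{D}$ --- and this variation is precisely what the argument must exploit, not assume away.

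Once parallelism fails, the remainder of your outline collapses. The chords need not all cross the same pair of sides of $\mathcal{T}$, so there is no single ``angular budget at the three vertices'' to draw down; also, ordering the chords along $t$ rather than geometrically means that the loop you close up with a sub-arc of $\partial f(\mathcal{T})$ need not be simple, so Lemma~\ref{lem:monogon} does not apply directly. The paper's proof addresses both issues: it observes that the dual graph of the chord decomposition of $\mathcal{T}$ is a tree with at most three branches, extracts a linear sub-path carrying at least $\tfrac{2m}{3}$ of the $m$ chords, and then --- comparing the \emph{geometric} order on that path with the order along $t$ via a permutation --- runs a case analysis (same vs.\ alternating orientations, with a separate eight-shaped-curve argument in the hardest sub-case) to show that consecutive chords are separated by at least $4\pi\delta$ of arc-length on $\partial\mathcal{D}$, via the inscribed angle theorem. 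The total arc-length $2\pi$ then gives $k\le 2+\tfrac{1}{\delta}$, hence $m\le\tfrac{5}{2\delta}$ after using $\delta\le\tfrac{1}{3}$. Your instinct to use Gauss--Bonnet on excursions and Lemma~\ref{lem:monogon} is in the right spirit, but the tree reduction, the geometric ordering of chords, and the orientation case analysis are the missing substance.
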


\begin{proof}
We denote by $\mathcal{C}h$ the subset of the (disjoint) chords in $f^{-1}(t)$ that cross the Delaunay triangle $\mathcal{T}$. We aim to show that $\mathcal{C}h$ is finite and that its cardinality is bounded above by $\frac{5}{2\delta}$.

\noindent\textbf{Step 1. $\mathcal{C}h$ is finite.}

We assume by contradiction that $f^{-1}(t)$ contains infinitely many chords in $\mathcal{D}$. Then there is a point $x \in X$ (possibly singular) and a radius $r>0$ smaller than the length of any saddle connection of $X$ such that the metric disk of radius $r$ centered on $x$ intersects $t$ in infinitely many disjoint chords accumulating on $x$. As they accumulate on $x$, the lengths of these chords tend to $2r$. This contradicts the fact that the length of $t$ is finite.

To construct an upper bound on the number of chords in $\mathcal{C}h$, we introduce the following combinatorial object.

\noindent\textbf{Step 2. Dual tree $\Gamma$ associated to $\mathcal{C}h$.}

The chords in $\mathcal{C}h$ decompose the triangle $\mathcal{T}$ into several connected components. Denote by $\Gamma$ the dual graph of this decomposition, where each vertex corresponds to a connected component and edges correspond to chords in $\mathcal{C}h$. Note that the number of edges of $\Gamma$ is equal to the cardinality of $\mathcal{C}h$.

Since the chords in $\mathcal{C}h$ are pairwise disjoint within $\mathcal{T}$, the dual graph $\Gamma$ satisfies the following properties:
\begin{itemize}
    \item $\Gamma$ is a tree;
    \item At most one vertex has valency three;
    \item All other vertices have valency at most two.
\end{itemize}
An example of such a tree $\Gamma$ is illustrated in Figure~\ref{fig:dualtree}.

\begin{figure}[!htbp]
	\centering
	\begin{minipage}{0.6\linewidth}
	\includegraphics[width=\linewidth]{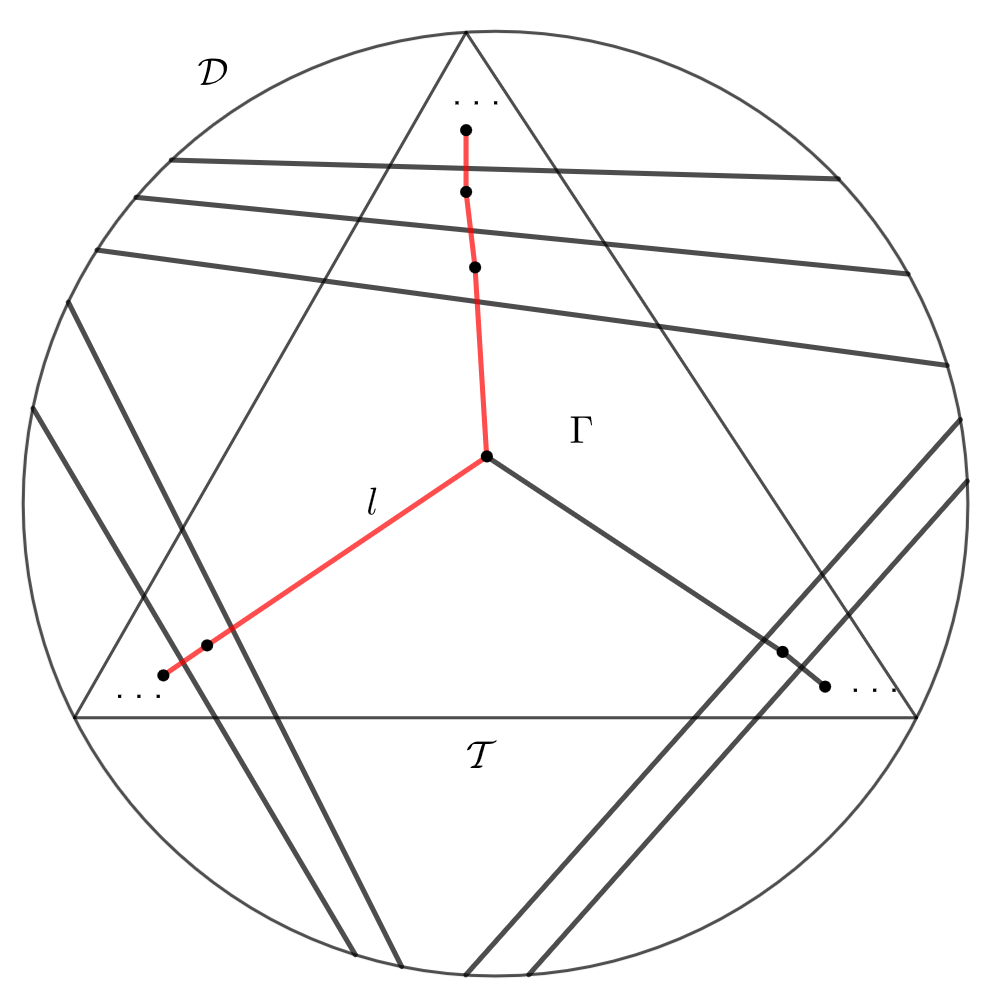}
	\footnotesize
	\end{minipage}
    \caption{The tree $\Gamma$ is the dual graph of the connected components of the triangle $\mathcal{T}$ subdivided by the chords. The red path $l$ is a linear path.}\label{fig:dualtree}
\end{figure}

A \emph{linear path} in $\Gamma$ is a subgraph in which every vertex has valency at most $2$, and which is homeomorphic to a (possibly closed) interval; see Figure~\ref{fig:dualtree}. We say that a linear path is \emph{maximal} if it is not properly contained in any other linear path of $\Gamma$. Since at most one vertex of $\Gamma$ has valency $3$ and all others have valency at most $2$, it follows that $\Gamma$ contains at most three maximal linear paths.

In the next step, we construct an upper bound for the number of edges in a linear path.

\noindent\textbf{Step 3. Lengths of maximal linear paths of $\Gamma$.}

Let $\mathcal{L}$ be a maximal linear path in $\Gamma$. Note that the edges of $\Gamma$ are dual to the chords in $\mathcal{C}h$. Denote by $(c_1, \ldots, c_k)$ the sequence of chords dual to the edges in $\mathcal{L}$, indexed so that $c_i$ and $c_{i+1}$ are consecutive chords in $\mathcal{T}$ (equivalently, dual to adjacent edges in $\mathcal{L}$).
\par
To estimate the number $k$ of edges in $\mathcal{L}$, we consider the sum $\sum\limits_{i=1}^{k-1} \beta(c_i, c_{i+1})$. By Lemma~\ref{lem:disk}, we know that for any three consecutive chords $c_i, c_{i+1}, c_{i+2}$, the following inequality holds:
\begin{equation}\label{equ:linearpath}
    \beta(c_i, c_{i+1}) + \beta(c_{i+1}, c_{i+2}) \ge 4\pi\delta.
\end{equation}

On the other hand, by definition, the sum $\sum\limits_{i=1}^{k-1} \beta(c_i, c_{i+1})$ is equal to the total central angles of a collection of disjoint arcs on the Delaunay disk $\mathcal{D}$. In particular, this sum is bounded above by $2\pi$. By grouping the terms in the sum $\sum\limits_{i=1}^{k-1} \beta(c_i, c_{i+1})$ into pairs, we obtain that when $k$ is even,
$$
2\pi \geq \sum\limits_{i=1}^{k-1} \beta(c_i, c_{i+1}) \geq 4\pi\delta \cdot \frac{k-2}{2},
$$
and when $k$ is odd,
$$
2\pi \geq \sum\limits_{i=1}^{k-1} \beta(c_i, c_{i+1}) \geq 4\pi\delta \cdot \frac{k-1}{2}.
$$
It follows that $k \leq 2 + \frac{1}{\delta}$.

Finally, we are ready to construct the upper bound on the number of chords in $\mathcal{C}h$.

\noindent\textbf{Step 4. Cardinality of $\mathcal{C}h$.}

Estimating the cardinality of $\mathcal{C}h$ is equivalent to estimating the number of edges in the dual tree $\Gamma$. As observed in Step~2, the graph $\Gamma$ has at most three maximal linear paths. Summing the number of edges along these paths counts each edge at most twice. Combining this with the upper bound on the number of edges in each maximal linear path obtained in Step~3, we find that the number of edges in $\Gamma$ is at most
$$
\frac{3}{2} \cdot \left(2 + \frac{1}{\delta} \right) = \frac{6\delta + 3}{2\delta} \leq \frac{5}{2\delta},
$$
where the last inequality follows from Lemma~\ref{lem:curvaturegap}.
\end{proof}

\begin{rmk}
Since it is a local construction, Lemma~\ref{lem:disk} holds in fact in a much more general settings. We just have to assume that the disk immersion preserves a complex affine structure. Proposition~\ref{prop:combinatorial} and Theorem~\ref{thm:MAIN} will also extend to complex affine surfaces of genus zero that admit a Delaunay triangulation.
\end{rmk}

\subsection{Essentially sharp upper bound on diameter}\label{sub:essential}
We deduce from Lemma~\ref{lem:Delaunay} an upper bound on the \textit{diameter} of a flat sphere.

\begin{proof}[Proof of Corollary~\ref{cor:diameter}]
For any pair of points $M,N$ in the flat sphere, we are going to prove that the distance between $M$ and $N$ induced by the flat metric is at most $(n+1)(\frac{2}{\sqrt{\pi}}+\frac{1}{\sqrt{2\pi\delta}})$.
\par
Unless $M$ and $N$ are already conical singularities, we mark them (the number of singularities is then at most $n+2$) and consider a Delaunay triangulation compatible with this new set of singularities.
\par
The Delaunay edges form a metric graph embedded in $X$. The shortest path between $M$ and $N$ in the graph is formed by at most $n+1$ edges. Lemma~\ref{lem:Delaunay} proves that each Delaunay edge is of length strictly less than $\frac{2}{\sqrt{\pi}}+\frac{1}{\sqrt{2\pi\delta}}$. It follows that the distance between $M$ and $N$ is smaller than $(n+1)(\frac{2}{\sqrt{\pi}}+\frac{1}{\sqrt{2\pi\delta}})$. A bound on the diameter follows.
\end{proof}

\begin{rmk}
Our proof plays the role of an effective version of Theorem 1 of \cite{Chew}. It was used in Section~2.2 of \cite{ACM} to prove that any saddle connection is homotopic to a path in the edges of the Delaunay triangulation of length at most a fixed multiple of the length of the saddle connection.
\end{rmk}

Our upper bound on the diameter is of order $\frac{1}{\sqrt{\delta}}$. In fact, this upper bound is essentially sharp which means that there is a surface whose diameter has a lower bound of order $\frac{1}{\sqrt{\delta}}$. We will construct such a surface in the following example. It also implies that an upper bound on the diameter has to depend on the number of singularities. 

\begin{ex}
Consider a flat sphere $X$ constructed as in Figure~\ref{fig:Sharp}. $l'$ is an arbitrary positive number and $l\in (0, l')$. $X$ has $m$ singularities of curvatures $\epsilon$, $m-1$ singularities of curvatures $-\epsilon$ and two singularities of curvatures $1-\frac{\epsilon}{2}$. Notice that when $\epsilon<\frac{2}{2m+1}$, the curvature gap $\delta$ of $X_{\epsilon}$ is $\frac{\epsilon}{2}$.
    
    \begin{figure}[!htbp]
	\centering
	\begin{minipage}{1\linewidth}
	\includegraphics[width=\linewidth]{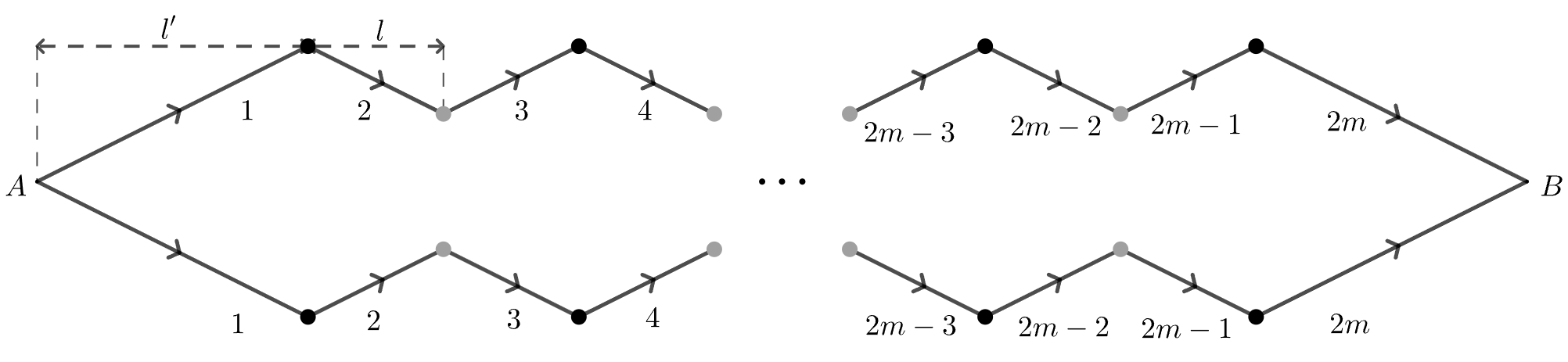}
	\footnotesize
	\end{minipage}
    \caption{Edges of the polygon are glued with respect to numbers and arrows. Curvatures of $A$ and $B$ are both $1-\frac{\epsilon}{2}$, black vertices are singularities of the curvature $\epsilon$ and grey vertices are singularities of the curvature $-\epsilon$.}\label{fig:Sharp}
    \end{figure}
    
Notice that the distance between two singularities of curvatures $1-\frac{\epsilon}{2}$ is realized by the diagonal between them in the polygon. The length of this diagonal is $2l' + 2(m-1)l > 2ml$. The area of the surface is $2[l'^2 + (m-1) (2ll'-l^2)]\tan\frac{\pi\epsilon}{2}<4ml'^2 \epsilon$. 
\par
Hence, the normalized diameter of $X$ is at least $\frac{l}{l'}\sqrt{\frac{m}{\epsilon}}=\frac{l}{l'}\sqrt{\frac{m}{2\delta}}$. In particular, this implies that we cannot hope for a uniform upper bound that does not depend on the number of singularities.    
\end{ex}

\section{Upper bounds}

\subsection{Bounding combinatorial lengths of trajectories}\label{sub:combinatorial}

\begin{defn}
In a flat sphere endowed with a fixed Delaunay triangulation, for any non-periodic trajectory $t$ in $X$, we define the \textit{combinatorial length} of $t$ as the number of segments of $t$ in the complement of $X$ of the union of Delaunay edges.
\end{defn}

In the following proposition, we use the curvature gap to give an upper bound on the combinatorial length of any simple trajectory.

\begin{prop}\label{prop:combinatorial}
We consider a flat sphere $X$ with $n$ conical singularities, a curvature gap $\delta>0$ and a fixed Delaunay triangulation. For any simple trajectory $t$ in $X$, unless $t$ is contained in a Delaunay edge, the combinatorial length of $t$ is at most $\frac{5n}{\delta}$.
\end{prop}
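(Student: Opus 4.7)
The strategy is to decompose $t$ by its crossings with Delaunay edges, bound each per-triangle contribution using Lemma~\ref{lem:disk}, and then sum over all Delaunay triangles. First I would recall that Euler's formula on the sphere forces any triangulation on $n$ vertices to have exactly $3n-6$ edges and $2n-4$ triangular faces, so the fixed Delaunay triangulation has precisely $2n-4$ triangles.

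Next, by definition the combinatorial length of $t$ equals $\sum_{\Delta} N_{\Delta}$, where $\Delta$ ranges over the Delaunay triangles of $X$ and $N_{\Delta}$ is the number of connected components of $t \cap \Delta$. For such a $\Delta$, let $f\colon \mathcal{D} \to X$ be the associated Delaunay disk immersion and let $\mathcal{T} \subset \mathcal{D}$ be the Euclidean triangle which $f$ maps homeomorphically onto $\Delta$. Since $f|_{\mathcal{T}}$ is a bijection, the components of $t \cap \Delta$ correspond bijectively to the components of $f^{-1}(t) \cap \mathcal{T}$, which by Lemma~\ref{lem:disk} are at most $\frac{5}{2\delta}$ in number. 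Summing over all $2n-4$ triangles would yield
\[
\text{combinatorial length of } t \;\leq\; (2n-4)\cdot\frac{5}{2\delta} \;=\; \frac{5(n-2)}{\delta} \;<\; \frac{5n}{\delta}.
\]

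The main obstacle is that Lemma~\ref{lem:disk} is stated for simple saddle connections, whereas the proposition concerns arbitrary simple trajectories, so one should verify that the chord-counting argument carries over without change. The essential inputs in its proof are the pairwise disjointness of the chords of $f^{-1}(t)$ (a consequence of simplicity alone), Lemma~\ref{lem:monogon} applied to polygonal loops built from chord portions together with auxiliary paths inside a triangle, and the Gauss-Bonnet analysis of eight-shaped loops; none of these genuinely uses that the endpoints of $t$ are conical singularities. If an endpoint of $t$ happens to lie in the interior of a Delaunay disk, the corresponding component of $f^{-1}(t) \cap \mathcal{T}$ becomes a truncated chord, but the same angle and orientation bookkeeping still caps $N_{\Delta}$ by $\frac{5}{2\delta}$. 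The exclusion of trajectories contained in a Delaunay edge is needed precisely so that $f^{-1}(t) \cap \mathcal{T}$ consists of honest interior chords rather than coinciding with a side of $\mathcal{T}$, in which case the per-triangle counting would degenerate.
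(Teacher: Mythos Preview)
Your argument for the saddle--connection case is exactly the paper's: apply Lemma~\ref{lem:disk} to each of the $2n-4$ Delaunay triangles and sum to obtain $(2n-4)\cdot\frac{5}{2\delta}=\frac{5(n-2)}{\delta}$.

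The divergence is in how you treat simple trajectories that are \emph{not} saddle connections. The paper does not attempt to push Lemma~\ref{lem:disk} through for truncated chords. Instead it marks the (at most two) non-singular endpoints of $t$ as additional conical singularities of angle $2\pi$; this leaves $\delta$ unchanged, turns $t$ into a genuine saddle connection on a sphere with at most $n+2$ singularities, and the saddle--connection case applied to the new Delaunay triangulation yields the bound $\frac{5n}{\delta}$. Your alternative---asserting that ``the same angle and orientation bookkeeping still caps $N_{\Delta}$ by $\frac{5}{2\delta}$'' when chords are truncated---is under-justified. The step that genuinely uses full chords is the summation $\sum_j \beta_j \le 2\pi$ via the inscribed angle theorem: it requires the components of $f^{-1}(t)$ to reach $\partial\mathcal{D}$ and to be pairwise disjoint \emph{in the whole disk}, not merely inside $\mathcal{T}$. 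If an endpoint of $t$ lies in $f(\mathcal{D})$ (which can happen even when it lies outside $f(\mathcal{T})$, and can happen for several preimages at once since $f$ is only an immersion), the corresponding segments are truncated; extending them to full chords may produce crossings with the other chords outside $\mathcal{T}$, and then the nested-arc argument that bounds $\sum_j\beta_j$ by the total circumference breaks down. The paper's marking trick avoids this entirely by forcing the endpoints of $t$ to become vertices, hence excluded from the interior of every Delaunay disk.
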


\begin{proof}
We first consider the case where $t$ is a saddle connection. Any Delaunay triangle $\mathcal{T}$ is contained in the image of some locally isometric immersion $f$ of a disk $\mathcal{D}$. Following Lemma~\ref{lem:disk}, the pullback of $t$ by this immersion is formed by at most $\frac{5}{2\delta}$ chords crossing the triangle. 
\par
Since there are $2n-4$ triangles in the Delaunay triangulation, the intersection of $t$ with the complement of Delaunay edges is formed by at most $(2n-4)\frac{5}{2\delta}$ distinct segments. This implies that combinatorial length of $t$ is bounded above by $(2n-4)\frac{5}{2\delta}$.
\par
If $t$ is not a saddle connection, we can mark its endpoints to create conical singularities of angle $2\pi$. This does not change the value of $\delta$ but may increase the number of triangles in the Delaunay triangulation by $2$. The upper bound obtained is $\frac{5n}{\delta}$ instead.
\par
Note that the trajectory cannot be periodic because it would cut out the flat sphere into two components each having a total discrete curvature equal to $1$ (Gauss-Bonnet formula). The curvature gap would therefore be zero.
\end{proof}

We next prove an upper bound on the number of simple saddle connections using Proposition~\ref{prop:combinatorial}.
\par
Kneser introduced in \cite{kne} normal coordinates to represent curves. We define a similar coordinate in our setting. A \textit{simple path} refers to a topological path which connects conical singularities and has no self-intersections or conical singularities in the interior. Given a triangulation $T$, we call a simple path \textit{normal} with respect to $T$ if it intersects with edges of $T$ transversely except at its endpoints and it enters and leaves a triangle of $T$ via different edges. In particular, if a simple saddle connection is not an edge of a Delaunay triangulation, it is normal with respect to the Delaunay triangulation.
\par
Denote by $p$ a normal simple path. For every edge $e$ of $T$, we define a non-negative integer to be the number of intersections of $p$ with the interior of the edge $e$, denoted by $\gamma(e)$. Since there are $3(n-2)$ many edges in $T$, we obtain a vector of $3(n-2)$ non-negative integers in $\mathbb{Z}^{3(n-2)}$ and we call it the \textit{normal coordinate} of $p$.

\begin{lem}\label{lem:normal}
    Given a triangulation $T$, a normal coordinate determines a normal simple path with respect to $T$, which is unique up to isotopy.
\end{lem}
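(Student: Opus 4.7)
The plan is to reconstruct $p$, up to isotopy, by analysing $p \cap \tau$ for each triangle $\tau$ of $T$ and then gluing across shared edges. Throughout, the two endpoints of $p$ (prescribed conical singularities) are regarded as given data along with the vector $(\gamma(e))_{e \in T}$.

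Inside a triangle $\tau$ with edges $e_1,e_2,e_3$ that does not contain any endpoint of $p$, the normality condition forces $p \cap \tau$ to be a disjoint family of properly embedded arcs, each connecting two \emph{different} edges of $\tau$. Up to isotopy rel $\partial\tau$ there are exactly three types of such arcs, labelled by the pairs $(e_i,e_j)$. Writing $x_{ij}$ for the multiplicity of the type $(e_i,e_j)$ arcs, the equations $\gamma(e_i) = x_{ij}+x_{ik}$ have the unique non-negative integer solution
\[
x_{ij} \;=\; \tfrac{1}{2}\bigl(\gamma(e_i)+\gamma(e_j)-\gamma(e_k)\bigr).
\]
Hence the isotopy class of $p \cap \tau$ rel $\partial\tau$ is recovered from $\gamma$ alone.

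For the (at most two) triangles $\tau$ containing an endpoint $v$ of $p$, one additional arc of $p\cap\tau$ issues from $v$ and terminates at an interior point of some edge $e\subset\tau$. This terminal arc contributes $1$ to $\gamma(e)$ and $0$ to the other two edges, so the linear system becomes $x_{ij} = \tfrac{1}{2}\bigl(\gamma(e_i)+\gamma(e_j)-\gamma(e_k)\pm 1\bigr)$, with the $\pm 1$ correction appearing exactly on the terminal edge. Once $v$ is fixed, the terminal edge $e$ is the unique choice making the resulting triple $(x_{ij})$ a non-negative integer vector; the existence and uniqueness of this choice rests on the parity condition that $\gamma(e_1)+\gamma(e_2)+\gamma(e_3)$ is odd at an endpoint triangle (and even elsewhere) together with the triangle inequalities satisfied by the $x_{ij}$. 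This pins down $p\cap\tau$ inside the endpoint triangles up to isotopy rel $\partial\tau$ with $v$ marked.

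To assemble the global picture, observe that along any edge $e$ of $T$, the $\gamma(e)$ intersection points of $p \cap e$ are linearly ordered along $e$. The only pairing of arc-endpoints produced by the two adjacent triangles that yields a simple $1$-manifold, rather than crossings inside a triangle or extraneous closed components, is the order-preserving one: the $j$-th endpoint on one side of $e$ glues to the $j$-th endpoint on the other. The assembled arcs therefore form a simple path isotopic to $p$, and any normal simple path with the same coordinate must agree with this assembly triangle by triangle, hence be isotopic to $p$. The main obstacle is the endpoint-triangle bookkeeping: identifying the terminal edge from the coordinates alone requires carefully combining the parity of $\gamma(e_1)+\gamma(e_2)+\gamma(e_3)$ with the non-negativity of the $x_{ij}$, essentially reproducing Kneser's normal-curve construction in the setting of paths with prescribed vertex endpoints.
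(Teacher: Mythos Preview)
Your approach matches the paper's: reconstruct $p$ triangle by triangle from the intersection numbers, then glue along edges by the order-preserving pairing. There is, however, a gap. You take the endpoints of $p$ as given data; the lemma as stated provides only the vector $(\gamma(e))_{e}$, and the application in Corollary~\ref{cor:numbersimplepoly} counts saddle connections purely by their normal coordinates with no endpoints prescribed. The paper closes this by a sharper local observation: in an endpoint triangle the arc issuing from the vertex $v$ exits through the \emph{opposite} edge $e_{3}$, which forces $x_{12}=0$ and hence the exact relation $\gamma(e_{3})=\gamma(e_{1})+\gamma(e_{2})+1$ (respectively $+2$ when both endpoint arcs lie in the same triangle). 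That relation singles out the endpoint triangle, the endpoint vertex, and the local arc pattern from $\gamma$ alone.

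Your alternative---fixing $v$ and then selecting the terminal edge as the unique one making $(x_{ij})$ a non-negative integer triple---does not in fact give uniqueness: for a purely topological normal path the first arc from $v$ may exit through an edge \emph{incident} to $v$, and then a coordinate such as $(\gamma(e_{1}),\gamma(e_{2}),\gamma(e_{3}))=(3,3,3)$ admits more than one compatible local picture. The reason the paper's relation holds is that a saddle connection is straight near $v$ and therefore meets the opposite side first. To repair your argument, either restrict explicitly to this geodesic setting, or show that any normal simple path is isotopic to one whose initial arc meets the opposite edge; in either case you recover $\gamma(e_{3})=\gamma(e_{1})+\gamma(e_{2})+1$, and with it the endpoints and the full local configuration, from $\gamma$ alone.
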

\begin{proof}
    Notice that triangles of $T$ cut the normal path $p$ into sub-paths $p_1,\dots, p_k$. The order of these sub-paths is according to the orientation of $p$. $p_1$ and $p_2$ are special because they start or end in a vertex of $T$. 
    \par
    For a triangle of $T$ that does not contain $p_1$ or $p_k$, we denote edges of the triangle by $e_1,e_2,e_3$. Notice that $p(e_1) + p(e_2) + p(e_3)$ is even and $p(e_i)+p(e_j)\ge p(e_k)$ for $\{i,j,k\} = \{1,2,3\}$. As normal coordinates in \cite{kne}, for fixed $p(e_1),p(e_2),p(e_3)$, there is only one way to pair intersections on different edges by arcs in the triangle such that these arcs do not intersect in the triangle (see Figure~\ref{fig:triangle}).

    \begin{figure}[!htbp]
	\centering
	\begin{minipage}{0.5\linewidth}
	\includegraphics[width=\linewidth]{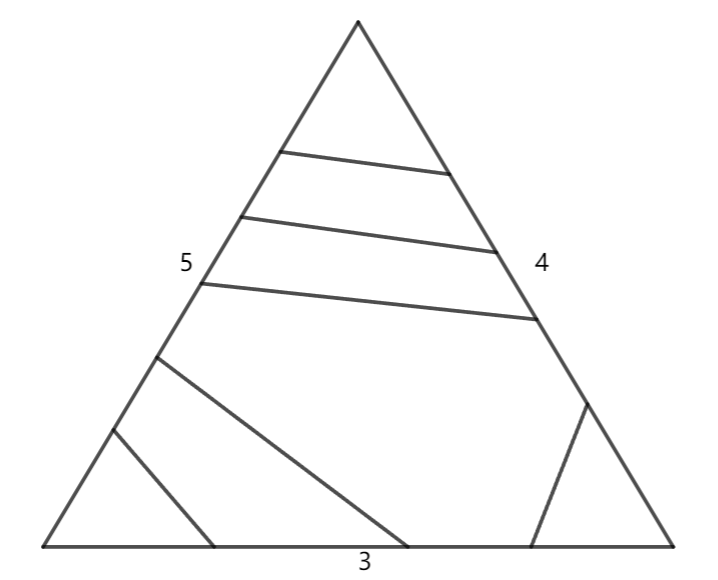}
	\footnotesize
	\end{minipage}
    \caption{The numbers $\{3,4,5\}$ determine the disjoint arcs in the triangle up to isotopy.}\label{fig:triangle}
    \end{figure}
    
    For $p_1$ and $p_k$, there are two possibilities. If they are in two different triangles, for each of the triangles, we can find a way to name edges of triangles by $e_1,e_2,e_3$ such that $p(e_3) = p(e_1) + p(e_2) +1$. If they are in the same triangle, we can name edges of the triangle by $e_1,e_2,e_3$ such that $p(e_3) = p(e_1) + p(e_2) +2$. In either case, once $p(e_1),p(e_2),p(e_3)$ are fixed, there is only one way to pair intersections on different edges by arcs such that these arcs do not intersect in the triangle (see Figure~\ref{fig:specialtriangle}). Hence, a normal coordinate corresponds to an unique way to reconstruct a normal path up to isotopy.

    \begin{figure}[!htbp]
	\centering
	\begin{minipage}{1\linewidth}
	\includegraphics[width=\linewidth]{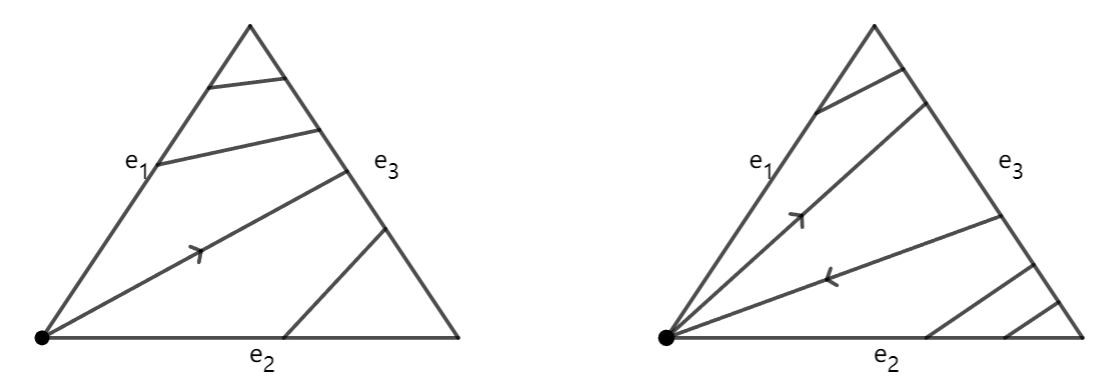}
        \caption{The left figure shows the case when a triangle contains only the first sub-path. The right figure shows the case when a triangle contains the first and the last sub-paths.}\label{fig:specialtriangle}
	\footnotesize
	\end{minipage}
    \end{figure}
\end{proof}

\begin{cor}\label{cor:numbersimplepoly}
    In a flat sphere with $n$ conical singularities and a curvature gap of $\delta>0$, the number of simple saddle connections is at most $\frac{(\frac{5n}{\delta}+3n-7)^{3n-6}}{(3n-7)!} + 3n-6$.
\end{cor}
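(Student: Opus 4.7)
My plan is to split simple saddle connections into those that coincide with an edge of a Delaunay triangulation of $X$ and those that do not, and to bound the two classes separately.

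Fix any Delaunay triangulation $T$ of $X$. Since $X$ is topologically a sphere with $n$ marked vertices, $T$ has $3n-6$ edges, so the first class contributes at most $3n-6$, yielding the additive term in the target bound.

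For the second class, a simple saddle connection $t$ not supported on a Delaunay edge is normal with respect to $T$: it meets each Delaunay edge only in transverse crossings (otherwise the geodesic $t$ would share an open subarc with the straight Delaunay edge and hence coincide with it), and inside each Delaunay triangle $t$ is a union of straight chords each entering and leaving through two distinct edges. By Lemma~\ref{lem:normal}, the isotopy class of $t$ rel endpoints is recorded injectively by its normal coordinate $(\gamma_e)_{e \in T} \in \mathbb{Z}_{\geq 0}^{3n-6}$. The combinatorial length of $t$ equals $1+\sum_e \gamma_e$, and is bounded by $\frac{5n}{\delta}$ thanks to Proposition~\ref{prop:combinatorial}, so $\sum_e \gamma_e \leq \frac{5n}{\delta}-1$. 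A stars-and-bars count then gives at most $\binom{\frac{5n}{\delta}+3n-7}{3n-6}$ admissible normal coordinates, which I upper-bound via $\binom{m}{k} \leq m^k/k! \leq m^k/(k-1)!$ to obtain $\frac{(\frac{5n}{\delta}+3n-7)^{3n-6}}{(3n-7)!}$, matching the main term.

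The delicate step, which I expect to be the main obstacle, is that the count above is a priori over isotopy classes of normal simple paths; to deduce the stated bound on the number of actual geometric simple saddle connections, I must verify that the map $t \mapsto (\gamma_e(t))$ is injective on simple saddle connections, equivalently that every relative isotopy class of simple arcs between two singularities contains at most one simple geodesic representative. I would argue this by contradiction: two isotopic simple geodesic saddle connections $t_1, t_2$ from $A$ to $B$ bound a topological bigon, and Gauss--Bonnet applied to this bigon gives $\sum_{i \in \mathrm{Int}} k_i = (\alpha_A + \alpha_B)/(2\pi)$ for the two interior angles $\alpha_A, \alpha_B$. Combining this identity with the curvature-gap inequality $|\sum k_i - 1| \geq \delta$ and applying Lemma~\ref{lem:monogon} to the monogonal loops obtained by doubling short portions of $t_1$ or $t_2$ near either endpoint should force one of the interior angles to vanish, yielding $t_1 = t_2$. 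The hypothesis $\delta > 0$ is essential here, paralleling its role in excluding flat cylinders and parallel families of geodesics.
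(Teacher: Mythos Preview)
Your approach is essentially the same as the paper's: split into Delaunay edges versus non-edges, encode the latter by normal coordinates via Lemma~\ref{lem:normal}, bound the total number of crossings by Proposition~\ref{prop:combinatorial}, and count via stars-and-bars. Your organization is slightly more streamlined (counting coordinate vectors with bounded sum in one shot, rather than summing over each exact crossing number as the paper does), but the two computations are equivalent and land on the same expression.

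Your concern about the injectivity of $t\mapsto(\gamma_e(t))$ is legitimate --- the paper simply asserts, in the proof of Theorem~\ref{thm:MAIN}, that ``there is at most one saddle connection in each homotopy class of topological arcs'' --- but your proposed argument is over-engineered and partly misdirected. The isotopy in Lemma~\ref{lem:normal} is relative to \emph{all} vertices of the triangulation, i.e., it takes place in the sphere punctured at every singularity. Hence if $t_1,t_2$ are isotopic simple saddle connections, any geodesic bigon they (or subarcs of them, in case they cross) bound contains \emph{no} singularity in its interior. Your own Gauss--Bonnet identity then reads $\alpha_A+\alpha_B=2\pi\sum_{i\in\mathrm{Int}}k_i=0$, so both interior angles vanish and the arcs coincide. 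The hypothesis $\delta>0$ plays no role in this step, and neither the curvature-gap inequality nor Lemma~\ref{lem:monogon} is needed; invoking them only obscures a one-line argument.
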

\begin{proof}
Given an integer $k$, we first prove an upper bound on the number of saddle connections crossing exactly $k$ Delaunay edges. Denote by $\gamma$ such a saddle connection. If $\gamma$ is an edge of the Delaunay triangulation, then it is one of the $3(n-2)$ Delaunay edges. 
\par
If $\gamma$ is not an edge of the Delaunay triangulation, then it is normal with respect to the Delaunay triangulation. It follows that the normal coordinate of $\gamma$ is well-defined. Since $\gamma$ crosses exactly $k$ Delaunay edges, the sum of the integer $\gamma(e)$ over $3(n-2)$ Delaunay edges is equal to $k$. According to Lemma~\ref{lem:normal}, the number of saddle connections that cross exactly $k$ Delaunay edges is equal to the number of ordered $3(n - 2)$-tuples of non-negative integers summing to $k$. This number is given by
$$
\binom{k + 3(n - 2) - 1}{3(n - 2) - 1} < \frac{(k + 3n - 7)^{3n - 7}}{(3n - 7)!}.
$$

Therefore, the total number of saddle connections crossing at most $k$ Delaunay edges is at most 
$$3(n-2) + \sum\limits_{1\le i \le k} \frac{(k+3n-7)^{3n-7}}{(3n-7)!} < 3n-6 + \frac{(k+3n-7)^{3n-6}}{(3n-7)!}.$$ 
Since Proposition~\ref{prop:combinatorial} tells us that $k< \frac{5n}{\delta}$, we obtain the upper bound $\frac{(\frac{5n}{\delta}+3n-7)^{3n-6}}{(3n-7)!} + 3n-6$. 
\end{proof}

\subsection{Trajectories with self-intersections}\label{sub:Self}

In order to bound the combinatorial length of non-simple trajectories, we combine geometric estimates with purely topological bounds on isotopically disjoint loops. We start by giving a rigorous definition of the number of self-intersections of a trajectory (see \cite{Ba} for the notion of self-intersections for geodesics on hyperbolic surfaces). 

\begin{defn}\label{defn:transverse}
At any self-intersection point $p$ of a trajectory $t$ in a flat sphere $X$, we say that two tangent vectors to $t$ (for the orientation of $t$) form a \textit{transverse pair} if they span the tangent plane at $p$. The total number $\iota(t, t)$ of transverse pairs (summed on the set of self-intersection points of $t$) is the \textit{self-intersection number} of $t$.
\end{defn}

The following topological lemma follows directly from the pairs-of-pants decomposition.

\begin{lem}\label{lem:isoloops}
In a topological sphere with $n \geq 3$ punctures, the maximal number of  isotopically disjoint homotopically nontrivial loops is $2n-3$.
\end{lem}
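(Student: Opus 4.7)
The plan is to use a standard pants-decomposition argument: I would split any maximal family of pairwise disjoint, pairwise non-isotopic essential loops on the $n$-punctured sphere into peripheral ones (those bounding a once-punctured disk) and the remaining ones, then bound each type separately.

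For achievability, I would take one small peripheral loop around each of the $n$ punctures, giving $n$ pairwise disjoint, pairwise non-isotopic essential loops. The complement of this family is a sphere with $n$ boundary circles and no punctures, which admits a pants decomposition using $n - 3$ additional disjoint, pairwise non-isotopic essential loops (valid for $n \geq 3$), for a total of $n + (n - 3) = 2n - 3$.

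For the upper bound, consider any family $\{\gamma_1, \dots, \gamma_k\}$ of pairwise disjoint essential loops in distinct isotopy classes on the $n$-punctured sphere $S$. Since every simple closed curve on $S^2$ separates, cutting along all the $\gamma_i$ produces exactly $k + 1$ connected pieces. Each piece is a sphere with $n_i$ marked punctures and $b_i$ boundary circles, satisfying $\sum n_i = n$ and $\sum b_i = 2k$. The essentiality of each $\gamma_i$ forbids pure disk pieces $(n_i, b_i) = (0, 1)$, and the pairwise non-isotopy forbids annulus pieces $(n_i, b_i) = (0, 2)$. The only remaining configuration with $n_i + b_i < 3$ is the once-punctured disk $(n_i, b_i) = (1, 1)$, whose unique boundary is peripheral to its puncture; since each puncture lies in a single piece, the number $p$ of such pieces satisfies $p \leq n$. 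Summing $n_i + b_i \geq 3$ over the remaining $k + 1 - p$ pieces yields
\[
n + 2k - 2p \;\geq\; 3(k + 1 - p),
\]
which rearranges to $k \leq n + p - 3 \leq 2n - 3$.

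The main obstacle I foresee is the topological bookkeeping: carefully checking that each of the three ``small'' piece types is correctly ruled out or accounted for, and in particular that $(1,1)$-pieces correspond exactly to peripheral loops sitting at distinct punctures. Once this classification is established, the final bound follows from a one-line counting of punctures and boundary circles.
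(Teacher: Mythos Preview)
Your argument is correct and is exactly the pants-decomposition counting that the paper invokes; the paper itself gives no proof beyond the sentence ``follows directly from the pairs-of-pants decomposition,'' and your proposal simply unpacks that sentence via the standard Euler-characteristic bookkeeping on the complementary pieces. The classification of small pieces and the inequality $k \le n + p - 3 \le 2n-3$ are handled correctly.
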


A monogonal trajectory $\gamma$ (see Section~\ref{sub:monogon}) decomposes a flat sphere $X$ into two connected components:
\begin{itemize}
    \item the \textit{inner component} $X_{\gamma}^{-}$ where the total curvature is strictly less than $1$;
    \item the \textit{outer component} $X_{\gamma}^{+}$ where the total curvature is strictly greater than $1$.
\end{itemize}

The following lemma give bounds for a trajectory entirely contained in the inner component relative to a monogonal trajectory. Notice that the result holds as long as the bound on combinatorial lengths of simple trajectories on $X$ (possibly zero curvature gap) is finite.

\begin{lem}\label{lem:innertraj}
We consider a flat sphere $X$ with a monogonal trajectory $\gamma$ of interior angle $\alpha$ and length $L$. Assuming that the bound $l$ on combinatorial lengths of simple trajectories is finite, any trajectory $t$ contained in $X_{\gamma}^{-}$ satisfies one of the following statements:
\begin{enumerate}
    \item $t$ does not contain any monogonal trajectory that would be isotopic to $\gamma$;
    \item the combinatorial length of $t$ is at most $2l$. Besides, its metric length is at most $\frac{L}{\cos(\alpha/2)}$.
\end{enumerate}
\end{lem}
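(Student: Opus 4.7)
Assume $t$ contains a monogonal sub-trajectory $\gamma'$ isotopic to $\gamma$; the task is to establish the two bounds in statement (2).

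I would first show that $\gamma'$ has the same interior angle $\alpha$ as $\gamma$. Since $\gamma$ and $\gamma'$ are isotopic in the punctured sphere, they enclose the same set of conical singularities, so by the Gauss-Bonnet argument of Lemma~\ref{lem:monogon} the interior angle of $\gamma'$ equals $\alpha$. By the classification in Section~\ref{sub:monogon}, the loops $\gamma$ and $\gamma'$ then belong to a common 1-parameter family of disjoint monogonal trajectories, and the annulus they bound is flat and contains no conical singularities. Writing $L'$ for the length of $\gamma'$, set $R = L/(2\cos(\alpha/2))$ and $R' = L'/(2\cos(\alpha/2))$, the distances from the lifts of the turning points of $\gamma$ and $\gamma'$ to the common fictive center $O$ of the rotation monodromy of the family.

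For the metric length bound, I would develop $t$ in the plane around $O$. Since $t$ is a geodesic containing $\gamma'$, it lifts to a straight line $\tilde t$ passing through two points $P$ and $P'$ at distance $R'$ from $O$ that are related by the rotation of angle $\pi-\alpha$ about $O$. The assumption $t \subset X_\gamma^-$ forces $\tilde t$ not to cross any developed lift of $\gamma$, each of which is a chord with endpoints on the circle of radius $R$ around $O$ and perpendicular distance $R\sin(\alpha/2)$ from $O$. The intersection of the $O$-side half-planes of all these chords is a convex region contained in the closed disk of radius $R$. Hence $\tilde t$ is a chord of this disk, and its length is bounded by the diameter $2R = L/\cos(\alpha/2)$.

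For the combinatorial length bound, the turning point $p$ of $\gamma'$ is a self-intersection of $t$, so I would cut $t$ at $p$ into two pieces: the loop $\gamma'$ and the complementary broken geodesic $t \setminus \gamma'$ joining the two endpoints of $t$ through $p$. The loop $\gamma'$ is a simple trajectory, so its combinatorial length is at most $l$. Choosing $\gamma'$ to be the outermost monogonal sub-loop of $t$ isotopic to $\gamma$ ensures that the complement contains no such sub-loop, and the inscribed-disk constraint from the previous step prevents any extra self-intersection (which would correspond to an additional pair of points on $\tilde t$ identified by a nontrivial rotation $R_{O,k(\pi-\alpha)}$, incompatible with the bounded convex geometry). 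Thus $t \setminus \gamma'$ is also simple, with combinatorial length at most $l$ by the same chord-counting argument as in Lemma~\ref{lem:disk}; summing gives $2l$.

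The main obstacle is the careful inscribed-disk analysis underlying the metric length bound, especially when $(\pi-\alpha)/\pi$ is irrational and the lifts of $\gamma$ do not close up into a finite polygon around $O$; one then checks that the intersection of the dense family of $O$-side half-planes is still contained in the closed disk of radius $R$. The combinatorial step subsequently rests on translating this geometric constraint into the nonexistence of additional self-intersections of $t$ once the outermost isotopic sub-loop $\gamma'$ has been removed.
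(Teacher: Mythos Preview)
Your metric length argument is essentially correct: developing the flat cylinder $\mathcal{C}$ between $\gamma$ and $\gamma'$ around the common holonomy center $O$, and observing that $\tilde t$ must stay in the polygonal region inscribed in the circle of radius $R = L/(2\cos(\alpha/2))$, does yield the bound $2R$. One caveat: your appeal to Section~\ref{sub:monogon} for a ``common $1$-parameter family'' is not justified---two isotopic monogonal trajectories of equal interior angle need not have parallel developed chords, so they need not lie in the same family. This does no harm, since what you actually use is only that they share the holonomy center $O$, which follows from isotopy.

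The combinatorial bound is where the argument has a genuine gap. Your decomposition $t=\gamma'\cup(t\setminus\gamma')$ produces in the second piece a \emph{broken} geodesic with a corner at the turning point $p$: it is not a trajectory, so the hypothesized bound $l$ (stated for simple \emph{trajectories}) does not apply to it, nor does the chord-counting of Lemma~\ref{lem:disk}, whose Gauss--Bonnet estimates between consecutive chords rely on a genuine geodesic arc joining them. Treating the two tails separately would only give $3l$. Moreover, your assertion that the ``outermost'' choice of $\gamma'$ together with the inscribed-disk constraint rules out self-intersections coming from rotations $R_{O,k(\pi-\alpha)}$ with $k\ge 2$ is not substantiated: a chord of the disk of radius $R$ can certainly carry pairs of equidistant points with angular separation $2\theta,3\theta,\dots$ from $O$. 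The paper avoids all of this by cutting $t$ at the point of minimal radius on $\tilde t$ (the midpoint of the developed chord $\gamma'$); on each side the radius is strictly monotone, so each half is a genuine simple trajectory in $\mathcal{C}$, and the bound $l$ applies to each directly, giving $2l$.
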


\begin{proof}
We assume that $t$ contains a monogonal trajectory $\gamma'$ that is isotopic to $\gamma$. Let $\mathcal{C}$ be the topological cylinder cut out by $\gamma \cup \gamma'$. The universal cover $\Tilde{\mathcal{C}}$ is bounded by an inner arc (by developing $\gamma'$) and an outer arc (by developing $\gamma$). Since every interior angle of the inner arc is strictly bigger than $\pi$, it follows that the maximal extension in both directions of monogonal trajectory $\gamma'$ leaves $\mathcal{C}$ by crossing $\gamma$ (see Figure~\ref{fig:topocylinder}).

    \begin{figure}[!htbp]
	\centering
	\begin{minipage}{0.5\linewidth}
	\includegraphics[width=\linewidth]{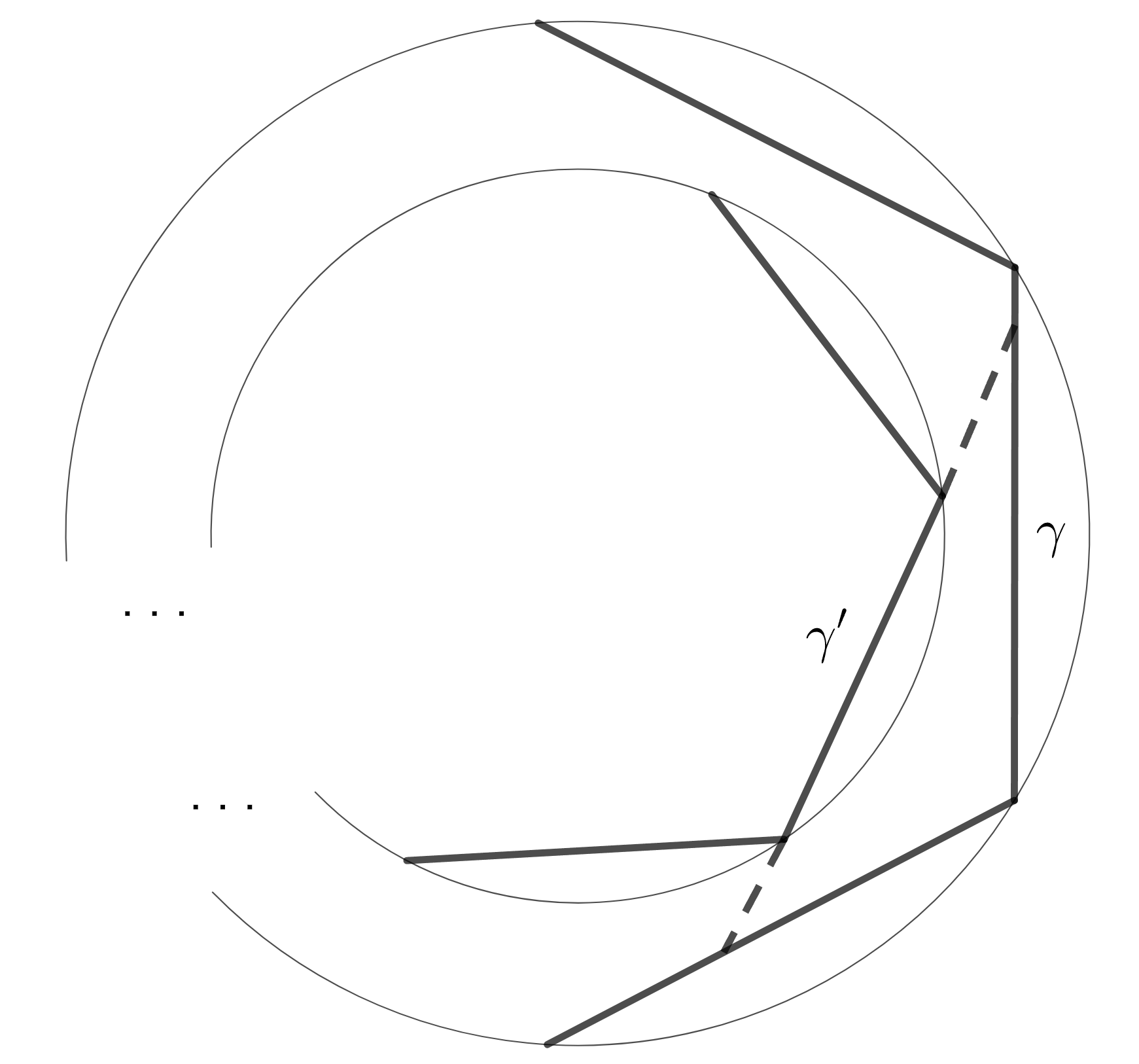}
	\footnotesize
	\end{minipage}
    \caption{The maximal extension of $\gamma'$ crosses $\gamma$.}\label{fig:topocylinder}
    \end{figure}

We draw $\Tilde{\mathcal{C}}$ inside the universal cover of $\mathbb{C}^{\ast}$ and identify the origin with the center of the rotation of the holonomy of the flat metric along loop $\gamma$ (and also $\gamma'$). It follows that the maximal extension of $\gamma'$ is contained in a chord of a disk of radius $\frac{L}{2\cos(\alpha/2)}$ (see Section~\ref{sub:monogon}). Consequently, the metric length of $t$ is at most $\frac{L}{\cos(\alpha/2)}$.
\par
The behaviour of trajectories in a flat annulus (the case of $\mathcal{C}$ is similar) is described in Section~\ref{sub:trajannulus}. The maximal extension of $\gamma'$ in each direction is simple, see Figure~\ref{fig:crossing}. It follows that $t$ is formed by at most two simple trajectories. Since the combinatorial length of a simple trajectory is at most $l$ by the condition, the desired bound follows immediately.
\end{proof}

Now we are able to generalize the bound of Proposition~\ref{prop:combinatorial} to self-intersecting trajectories.

\begin{prop}\label{prop:SIcombinatorial}
We consider a flat sphere $X$ with $n$ conical singularities and a fixed Delaunay triangulation. Assuming that the bound $l$ on combinatorial lengths of simple trajectories in $X$ is finite. For a trajectory $t$ in $X$, the combinatorial length $s$ of $t$ satisfies $s \leq 4l(n-1)\sqrt{|\iota(t, t)|}+4l$  
where $\iota(t, t)$ is the self-intersection number of trajectory $t$.
\end{prop}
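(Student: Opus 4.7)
The plan is to partition $t$ into at most $2(n-1)\sqrt{|t\cap t|}+2$ sub-trajectories, each of combinatorial length at most $2l$, by cutting at self-intersections and then grouping consecutive pieces into clusters of nested monogonal sub-loops sharing a common isotopy class. Multiplying then gives the claimed bound $s\leq 4l(n-1)\sqrt{|t\cap t|}+4l$. The main tools are Lemma~\ref{lem:innertraj} to control each cluster and Lemma~\ref{lem:isoloops} to control the number of clusters.

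I would first cut $t$ at every self-intersection, producing simple sub-trajectories of combinatorial length at most $l$. This naive decomposition yields only $s\leq(2|t\cap t|+1)l$, which is linear in $|t\cap t|$. The improvement comes from bundling. At each self-intersection of $t$ there is a canonical sub-loop of $t$ between its two visits; the innermost such sub-loops are simple monogonal trajectories in the sense of Section~\ref{sub:monogon}. I would classify these monogonal sub-loops by their isotopy class on the $n$-punctured sphere, and group every maximal sequence of consecutive nested monogonal sub-loops sharing an isotopy class into a single \emph{cluster}. Each such cluster, being a portion of $t$ confined to $X_\gamma^-$ for some monogonal $\gamma$ and containing another monogonal sub-loop isotopic to $\gamma$, has combinatorial length at most $2l$ by Lemma~\ref{lem:innertraj}. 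Simple sub-arcs not caught by any cluster also satisfy the $\leq 2l$ bound.

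It remains to bound the number $N$ of clusters. Distinct clusters correspond to distinct isotopy classes of monogonal sub-loops on the punctured sphere. By Lemma~\ref{lem:isoloops}, at most $2n-3$ such classes can be pairwise isotopically disjoint; any additional class forces a geodesic representative that must cross some other class representative transversely in the flat metric, and each such transverse crossing yields a self-intersection of $t$. A double-counting/Cauchy--Schwarz argument on the intersection graph of isotopy classes then converts the $2n-3$ topological bound together with the budget $|t\cap t|$ of transverse crossings into the inequality $N \leq 2(n-1)\sqrt{|t\cap t|}+2$.

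The main obstacle is the last step. I expect two technical points to require care. First, one must verify that a nested family of monogonal sub-loops of the same isotopy class, which may be very long along $t$, can indeed all be absorbed into a single cluster covered by a single application of Lemma~\ref{lem:innertraj}; this is likely achieved by choosing $\gamma$ to be the outermost such loop and invoking the maximality condition inside the inner component $X_\gamma^-$. Second, the Cauchy--Schwarz counting must be set up so as not to overcount self-intersections that are shared between clusters or internal to a cluster. Finally, the additive constant $+4l$ absorbs the contribution of the (at most two) endpoint sub-arcs of $t$ if $t$ is not a closed trajectory.
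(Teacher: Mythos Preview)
Your proposal identifies the correct ingredients---Lemma~\ref{lem:innertraj}, Lemma~\ref{lem:isoloops}, and a quadratic double-counting---but the decomposition you propose is both more complicated than what the paper does and contains a real gap.

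The paper does not cluster by isotopy class at all. It simply chops $t$ greedily into consecutive sub-trajectories $t_{1},\dots,t_{m}$, each of combinatorial length strictly larger than $2l$, so that automatically $m \geq \frac{s}{2l}-1$. Each $t_{i}$, being longer than $l$, is not simple and therefore contains some monogonal sub-loop $\gamma_{i}$. The key step, which your proposal only gestures at, is the clean dichotomy: for $i\neq j$, either $t_{i}$ meets $t_{j}$, or $\gamma_{i}$ and $\gamma_{j}$ are isotopically distinct. This is exactly Lemma~\ref{lem:innertraj} applied contrapositively: if $\gamma_{i}\simeq\gamma_{j}$ and $t_{i}\cap t_{j}=\emptyset$, then (after swapping if needed) $t_{j}\subset X_{\gamma_{i}}^{-}$ and contains a loop isotopic to $\gamma_{i}$, forcing its combinatorial length to be at most $2l$, a contradiction. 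Now any $(2n-2)$-element subset of $\{1,\dots,m\}$ must contain an intersecting pair (otherwise Lemma~\ref{lem:isoloops} is violated), and the subset-versus-pair double count gives $|t\cap t|\geq\binom{m}{2n-2}\big/\binom{m-2}{2n-4}\geq\bigl(\tfrac{m-1}{2n-2}\bigr)^{2}$, hence $m\leq 2(n-1)\sqrt{|t\cap t|}+1$ and $s\leq 2l(m+1)$.

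The gap in your approach is the assertion that ``distinct clusters correspond to distinct isotopy classes''. This is false: $t$ may wind around one configuration of punctures, wander elsewhere, then return and wind around the same configuration again, producing two separate clusters with the same isotopy class. Your Cauchy--Schwarz count then bounds the number of isotopy classes, not the number $N$ of clusters. A second issue is that Lemma~\ref{lem:innertraj} only applies to a sub-trajectory lying entirely inside $X_{\gamma}^{-}$; the portions of $t$ entering and leaving the outermost loop of a cluster need not stay on the inner side, so it is not clear your clusters have length $\leq 2l$. The paper sidesteps both problems by chopping by length first and invoking isotopy classes only afterward, via the dichotomy above, rather than trying to organise the decomposition around isotopy classes from the start.
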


\begin{proof}
If the combinatorial length $s$ of $\gamma$ is less than $4l$,  then the conclusion $s \leq 4l(n-1)\sqrt{|\iota(t, t)}+4l$ holds automatically. Thus, we assume that $s\ge 4l$ in the following.

We decompose the trajectory $t$ into finitely many segments $t_1, \ldots, t_m, t_{m+1}$, where:
\begin{itemize}
    \item Each $t_i$ for $1 \le i \le m$ has combinatorial length exactly $2l$;
    \item The final segment $t_{m+1}$ has combinatorial length strictly less than $2l$ (possibly zero).
\end{itemize}
This decomposition ensures that the number $m$ satisfies
\begin{equation}\label{equ:loweronm}
    m = \left\lfloor \frac{s}{2l} \right\rfloor \ge \frac{s}{2l} - 1.
\end{equation}

We are going to give an upper bound on $m$ in terms of $\iota(t, t)$.

If $m < 2n - 2$, then $s \le 2l(m+1) \le 2l(2n - 3 + 1) = 4l(n - 1).$ By the assumption that $s \ge 4l$ and that $l$ is an upper bound for simple trajectories, we have $\iota(t,t) \ge 1$. Therefore, the inequality $s \le 4l(n - 1)\sqrt{|\iota(t,t)|} + 4l$ holds automatically. Hence, in the following we restrict to the case $m \ge 2n - 2$.

Since the combinatorial length of $t_i$ is $2l>l$, each $t_i$ for $1 \le i \le m$ is not simple. In particular, $t_{i}$ contains a monogonal trajectory $\gamma_{i}$ (see Section~\ref{sub:monogon}). Besides, Lemma~\ref{lem:innertraj} proves that for any pair $i,j$ with $i \neq j$, one of two statements holds:
\begin{itemize}
    \item $t_{i}$ intersects $t_{j}$;
    \item $\gamma_{i}$ and $\gamma_{j}$ are isotopically distinct.
\end{itemize}

For each subset $S \subset \{1, \dots, m\}$ of cardinality $|S| = 2n - 2$, we claim that there exists at least one pair $i \neq j$ in $S$ such that the segments $t_i$ and $t_j$ intersect. Indeed, if all such pairs were disjoint, the corresponding monogonal trajectories $(\gamma_i)_{i \in S}$ would form a family of $2n - 2$ isotopically disjoint loops, contradicting Lemma~\ref{lem:isoloops}.

There are $\binom{m}{2n - 2}$ such subsets $S$. For a fixed pair $i \ne j$, the number of subsets $S$ that contain both $i$ and $j$ is $\binom{m - 2}{2n - 4}$. Therefore, the total number of intersecting pairs among the segments $t_1, \ldots, t_m$ is at least
$$
\frac{\binom{m}{2n - 2}}{\binom{m - 2}{2n - 4}} = \frac{m(m - 1)}{(2n - 2)(2n - 3)} \ge \left( \frac{m - 1}{2n - 2} \right)^2.
$$
This implies that
$$
\iota(t, t) \ge \left( \frac{m - 1}{2n - 2} \right)^2.
$$
Rearranging, we obtain
$$
m \leq 2(n - 1)\sqrt{\iota(t, t)} + 1.
$$
Since inequality~\eqref{equ:loweronm} gives $m \ge \frac{s}{2l} - 1$, we deduce that
$$
s \le 2l m + 2l \le 4l(n - 1)\sqrt{\iota(t, t)} + 4l.
$$
\end{proof}

\begin{proof}[Proof of Theorem~\ref{thm:MAIN}]
We are going to prove an upper bound on the number of saddle connections crossing exactly $s$ Delaunay edges. Since there is at most one saddle connection in each homotopy class of topological arcs, an oriented saddle connection $\gamma$ is completely characterized by:
\begin{itemize}
    \item the triangle corner where $\gamma$ starts;
    \item for each $1 \leq i \leq k-1$, whether the trajectory leaves the triangle through the left or the right side after the $i$-th crossing with a Delaunay edge.
\end{itemize}
Forgetting the orientation of the saddle connection, we deduce that there are at most $(3n-6)2^{s-1}$ saddle connections crossing exactly $s$ Delaunay edges.
\par
Summing these upper bounds (including the number of saddle connections coinciding with a Delaunay edge), we obtain that the total number of saddle connections crossing at most $s$ saddle connections is at most $(3n-6)2^{s}$.
\par
Proposition~\ref{prop:SIcombinatorial} provides the upper bound on the number $s$ of crossings with Delaunay edges for a saddle connection with at most $k$ self-intersections, which is $s\le 4l(n-1)\sqrt{\iota(t, t)}+4l$.
Proposition~\ref{prop:combinatorial} implies that the bound $l$ on combinatorial lengths of simple trajectories is at most $\frac{5n}{\delta}$. Hence, $s \leq \frac{20n(n-1) \sqrt{k} + 20n}{\delta}$. When $k =0$, the bound is given by Corollary~\ref{cor:numbersimplepoly}.
\end{proof}

\subsection{Bounding metric lengths of trajectories}\label{sub:length}

We have an upper bound on combinatorial lengths of trajectories with bounded number of self-intersections (see Proposition~\ref{prop:SIcombinatorial}). Combined with a control on the maximal length of Delaunay edges (see Lemma~\ref{lem:Delaunay}), we get an explicit upper bound on the maximal length of a trajectory with bounded number of self-intersections (including simple trajectories). Remarkably, this bound does not depend on the systole (length of the smallest simple saddle connection) but only on the curvature gap. The bound is uniform on the moduli space of flat sphere with prescribed conical angles.

\begin{proof}[Proof of Theorem~\ref{thm:MAIN2}]
Following Lemma~\ref{lem:Delaunay}, the length $L$ of any Delaunay edge in such a flat sphere $X$ satisfies $L^{2} < \frac{4}{\pi}+\frac{1}{2\pi\delta}$ and thus $L<\frac{2}{\sqrt{\pi}}+\frac{1}{\sqrt{2\pi\delta}}$.
\par
For a simple trajectory $t$ of $X$ that is not contained in a Delaunay edge, we deduce from the proof of Proposition~\ref{prop:combinatorial} that $t$ is formed by at most $\frac{5n}{\delta}$ segments, each being contained in a unique Delaunay triangle. We obtain immediately that the metric length of $t$ is less than $\frac{10n}{\delta\sqrt{\pi}}+\frac{5n}{\delta^{3/2}\sqrt{2\pi}}$.
\par
For a self-intersecting trajectory $t$ with at most $k$ self-intersections, Proposition~\ref{prop:SIcombinatorial} implies that the combinatorial length is at most $ 4l(n-1)\sqrt{\iota(t, t)}+4l$ and Proposition~\ref{prop:combinatorial} implies that  $l\le \frac{5n}{\delta}$. Combining with $L<\frac{2}{\sqrt{\pi}}+\frac{1}{\sqrt{2\pi\delta}}$, the metric length is less than 
$
\frac{40n(n-1)\sqrt{k}+40n}{\delta\sqrt{\pi}}
+
\frac{20n(n-1)\sqrt{k}+20n}{\delta^{3/2}\sqrt{2\pi}}.
$
\end{proof}

\subsection{Families of homotopic regular closed geodesics}\label{sub:parallel}

A \emph{regular closed geodesic} on a flat sphere $X$ is a geodesic that does not pass through any singularity and returns to its starting point with the same tangent direction. 

In the following, homotopies on the sphere are always considered relative to the singularities. Hence, if two regular closed geodesics are homotopic, then they bound an immersed flat cylinder containing no singularities. A \emph{family of homotopic regular closed geodesics} is an immersed open flat cylinder foliated by regular closed geodesics.

A family of homotopic regular closed geodesics is said to be \emph{maximal} if it contains all regular closed geodesics that are homotopic to them.

\begin{lem}\label{lem:parallelintersection}
Two homotopic regular closed geodesics on a flat sphere have the same self-intersection number.
\end{lem}

\begin{proof}
Let $t$ and $t'$ be two homotopic regular closed geodesics on a flat sphere $X$. By definition, they bound an immersed flat cylinder $I \times S^1$ for some interval $I$. We orient both $t$ and $t'$ consistently with the standard orientation of $S^1$.

With this orientation, the left and right sides of $t$ are well-defined, and $t'$ lies entirely on one side of $t$. We assume, without loss of generality, that it lies on the left. For each transverse pair at a self-intersection of $t$, the local configuration is as illustrated in the left part of Figure~\ref{fig:evenintersection}.

This local picture defines a map that sends each transverse pair of $t$ to a transverse pair of $t'$. Reversing the roles of $t$ and $t'$ yields the inverse map by the same analysis. Hence, the map is a bijection between the transverse pairs of $t$ and those of $t'$, and in particular, $\iota(t, t) = \iota(t', t')$.
\end{proof}

By the above lemma, we define the \emph{self-intersection number} of a family of homotopic regular closed geodesics as the self-intersection number of a regular closed geodesic in the family.

The upper bounds in Theorem~\ref{thm:MAIN} also apply to the count of regular closed geodesics. For clarity, given an integer $k$, we denote by $N^{sc}(X,k)$ the number of saddle connections on $X$ with self-intersection number at most $k$, and by $N^{cg}(X,k)$ the number of maximal families of homotopic regular closed geodesics on $X$ with self-intersection number at most $k$.

\begin{cor}\label{cor:countclosedgeodesics}
In a flat sphere $X$, we have $N^{cg}(X,k) \leq N^{sc}(X,k)$ for any $k\in\mathbb{N}$. In particular, if $X$ has $n$ singularities and a positive curvature gap $\delta$, then $N^{cg}(X,k) \leq (3n - 6) \cdot 2^{s}$, where $s = \frac{20n(n - 1)\sqrt{k} + 20n}{\delta}$.
\end{cor}

\begin{proof}
To prove the first claim, we fix once and for all an orientation for each saddle connection on $X$. We define $\widehat{N}^{sc}(X,k)$ to be the number of pairs $(t,s)$, where $t$ is a saddle connection on $X$ with self-intersection number at most $k$, and $s\in\{L,R\}$ specifies the choice of the left or right side of $t$ relative to the chosen orientation.

Since each saddle connection has exactly two sides, we have $\widehat{N}^{sc}(X,k)=2N^{sc}(X,k)$. Thus, to prove $N^{cg}(X,k)\le N^{sc}(X,k)$, it suffices to show that $2N^{cg}(X,k)\le \widehat{N}^{sc}(X,k).$

Let $\mathcal{C}_k$ be the set of triples $(C,t,s)$, where
\begin{itemize}
    \item $C$ is a maximal family of homotopic regular closed geodesics on $X$ whose members have self-intersection number at most $k$ (equivalently, an immersed maximal flat cylinder with this property);
    \item $t$ is a saddle connection contained in the boundary of the cylinder determined by $C$;
    \item $s\in\{L,R\}$ is a side of $t$ such that the cylinder determined by $C$ is adjacent to $t$ from the side $s$.
\end{itemize}
For each maximal family $C$ with self-intersection number $\le k$, the flat cylinder determined by $C$ has exactly two boundary components. For each boundary component (which may contain several saddle connections), one can choose a saddle connection $t$ contained in it and let $s\in\{L,R\}$ be the side of $t$ from which $C$ is adjacent. This defines a triple $(C,t,s)\in \mathcal{C}_k$. Therefore, $2N^{cg}(X,k)\le |\mathcal{C}_k|$.

There is a natural map $\mathcal{C}_k\to \{(t,s)\}$ given by $(C,t,s)\mapsto (t,s)$. Moreover, any saddle connection $t$ appearing in the boundary of the cylinder of $C$ has self-intersection number bounded above $k$. Therefore, the image consists of pairs counted by $\widehat{N}^{sc}(X,k)$.

Finally, for a fixed pair $(t,s)$, there is at most one maximal family $C$ whose cylinder is adjacent to $t$ from the side $s$. It follows that the above map is injective, and hence $|\mathcal{C}_k|\le \widehat{N}^{sc}(X,k)$.
Combining the two inequalities gives
\[
2N^{cg}(X,k)\le |\mathcal{C}_k|\le \widehat{N}^{sc}(X,k).
\]
Since $\widehat{N}^{sc}(X,k)=2N^{sc}(X,k)$, we have $N^{cg}(X,k)\le N^{sc}(X,k)$.

For the second claim, Theorem~\ref{thm:MAIN} provides an upper bound for $N^{sc}(X,k)$. Together with $N^{cg}(X,k)\le N^{sc}(X,k)$, this yields the stated upper bound for $N^{cg}(X,k)$.
\end{proof}

\section{Application to irrational billiard}\label{sec:billiard}

Let $P$ be a polygon in the plane. We recall a classical construction from~\cite{FoxKer, KZ} that associates to $P$ a flat sphere. Take two copies of $P$ and identify their boundaries. The resulting flat sphere is denoted by $X_P$ and is called the \emph{pillowcase of the polygon $P$}. This construction induces a quotient map $\pi: X_P \to P$, where $\pi$ maps each copy to $P$.

Let $\Lambda$ be the set of vertices of $P$. Note that $v\in \Lambda$ gives rise to a conical singularity of $X_v$. We define the \textit{discrete curvature} of $P$ at $v$, denote by $k_v$, to be the curvature of the singularity corresponding to $v$ in $X_P$. More precisely, if $\theta$ is the interior angle of $P$ at $v$, then the cone angle of the singularity of $v$ in $X_P$ is $2\theta$. Then the curvature of the singularity is
$$
k_v =\frac{2\pi - 2\theta}{2\pi} = \frac{\pi - \theta}{\pi}.
$$

We then define the \textit{curvature gap} $\delta$ of $P$ to be the curvature gap of $X_P$. Equivalently, $\delta = \inf\limits_{I \subset \Lambda} \left| 1 - \sum\limits_{v \in I} k_v \right|$.

\subsection{Unfolding billiard paths}
We explain how to unfold a billiard path on $P$ on the pillowcase $X_P$ in the following. 
\par
We first consider the case of a generalized diagonal $\gamma$, and denote by $(s_1, \ldots, s_n)$ the sequence of geodesic segments ordered according to the parameterization of $\gamma$. We lift $s_1$ to a chosen copy of $P$ in $X_P$, then lift $s_2$ to the other copy, and continue in this alternating manner. The resulting path is a saddle connection on $X_P$, denoted by $t_\gamma$.
\par
Second, we consider the case of a periodic billiard path $\gamma$. Denote by $(s_1, \ldots, s_n)$ the sequence of geodesic segments of $\gamma$, ordered as in the previous case. We define the \emph{period} of $\gamma$ to be the minimal number of reflections after which the path returns to its starting point with the same tangent direction, which in this case is simply $n$.
\begin{enumerate}
    \item If the period of $\gamma$ is an even number $2k$, then we unfold $\gamma$ as in the case of generalized diagonal. The final segment $s_{2k}$ is unfolded in a different copy of $P$ on $X_P$ from that of $s_1$. Hence, $\gamma$ is unfolded into a regular closed geodesic, denoted by $t_\gamma$.
    \item If $\gamma$ has odd period $2k+1$, then the segment $s_{2k+1}$ is lifted to the same copy of $P$ as $s_1$. To obtain a closed geodesic on $X_P$, we unfold the sequence $(s_1, \ldots, s_{2k+1})$ once more. The resulting regular closed geodesic is again denoted by $t_\gamma$.
\end{enumerate}
We refer to $t_\gamma$ as the \emph{unfolding of $\gamma$} on the pillowcase $X_P$.

\begin{rmk}\label{rmk:relation}
Note that in the case of generalized diagonals or periodic billiard paths of even period, the unfolded path $t_\gamma$ depends on the choice of the initial copy of $P$ in which the segment $s_1$ lies. As a result, there are two distinct unfoldings of $\gamma$ on $X_P$.
\par
Moreover, the Euclidean length of $t_\gamma$ is equal to that of $\gamma$ if $\gamma$ is a generalized diagonal or a periodic billiard path of even period; otherwise, it is twice the length of $\gamma$.
\end{rmk}

Two periodic billiard paths $\gamma$ and $\gamma'$ in $P$ are said to be \emph{parallel} if there exist unfoldings $t_\gamma$ and $t_{\gamma'}$ on $X_P$ such that $t_\gamma$ and $t_{\gamma'}$ are homotopic regular closed geodesics.
\par
A \emph{family of parallel periodic billiard paths} is a collection of periodic billiard paths in $P$ whose unfoldings form a family of homotopic regular closed geodesics on $X_P$. Such a family is said to be \emph{maximal} if its unfolding is a maximal family of homotopic closed geodesics on $X_P$.

\subsection{Self-intersection numbers of billiard paths}\label{sec:intersectionnumberbilliardpaths}

We aim to count the number of generalized diagonal and periodic billiard paths of bounded ``self-intersection number''. For that purpose, we define the \emph{self-intersection number} of a generalized diagonal or a periodic billiard path as follows.

Let $\gamma$ be a billiard path on $P$. We first define \emph{transverse pairs} at self-intersection points:
\begin{itemize}
    \item If a self-intersection point $p$ lies in the interior of $P$, transverse pairs at $p$ are defined as in Definition~\ref{defn:transverse}.
    \item If a self-intersection point $p$ lies on $\partial P$, we consider only the tangent vectors of $\gamma$ at $p$ that point into the interior of $P$. A \emph{transverse pair} at $p$ is then defined as a pair of such tangent vectors that are linearly independent.
\end{itemize}
We then define the \emph{self-intersection number} $\iota(\gamma, \gamma)$ of a billiard path $\gamma$ as the total number of transverse pairs at all its self-intersection points.

However, the case of periodic billiard paths requires additional care. We want to count the maximal family of parallel periodic billiard paths. However, these numbers may vary along the family, see Figure~\ref{fig:odd}.

    \begin{figure}[!htbp]
	\centering
	\begin{minipage}{0.5\linewidth}
	\includegraphics[width=\linewidth]{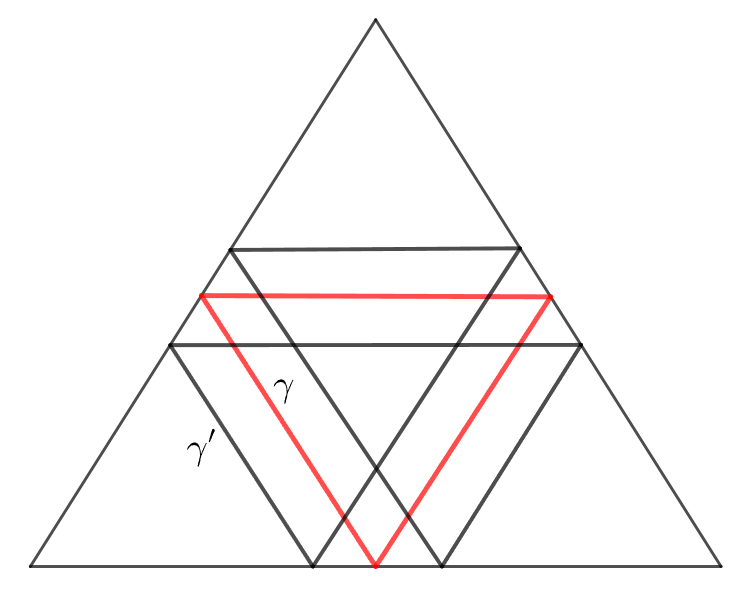}
	\footnotesize
	\end{minipage}
    \caption{The periodic billiard path $\gamma$ is simple, while the parallel path $\gamma'$ has self-intersection number 3.}\label{fig:odd}
    \end{figure}

\begin{lem}\label{lem:selfintersectionnumber}
Let $P$ be a polygonal billiard, and let $\mathcal{F}$ be a maximal family of parallel periodic billiard paths in $P$. Then:
\begin{enumerate}
    \item The family $\mathcal{F}$ contains at most one periodic billiard path of odd period. If $\mathcal{F}$ contains such a path, then the period of any other path in the family is equal to twice the period of the odd one; otherwise, all paths in $\mathcal{F}$ have the same period.
    
    \item Any two parallel periodic billiard paths of even period in $\mathcal{F}$ have the same self-intersection number.
    
    \item If $\mathcal{F}$ contains a periodic billiard path $\gamma$ of odd period $T$, then for any other path $\gamma' \in \mathcal{F}$, we have
    $$
    \iota(\gamma', \gamma') = 4\iota(\gamma, \gamma) + T.
    $$
\end{enumerate}
\end{lem}

\begin{rmk}
We emphasize that the above result crucially depends on our definition of self-intersection number as the total number of transverse pairs. The formula does not hold if one defines the self-intersection number merely as the number of self-intersection points.
\end{rmk}

\begin{proof}
Claim~(1) follows from Theorem~1 in~\cite{Galperin1995}.
\par
For~(2), the proof is similar to that of Lemma~\ref{lem:parallelintersection}. Let $\gamma$ and $\gamma'$ be two parallel periodic billiard paths of even period. Let $(s_1, \ldots, s_T)$ denote the sequence of geodesic segments of $\gamma$ in $P$, ordered according to the parametrization of $\gamma$, and let $(s'_1, \ldots, s'_T)$ be the corresponding sequence for $\gamma'$.
\par
Note that each segment $s_i$ of $\gamma$ is parallel to segment $s'_i$ in $\gamma'$. Therefore, the local configuration of a transverse pair at a self-intersection point of $\gamma$ in the interior of $P$ is illustrated in the left part of Figure~\ref{fig:evenintersection}, while the case of a boundary self-intersection point is shown in the right part. These configurations imply the existence of a map sending each transverse pair of $\gamma$ to a transverse pair of $\gamma'$.
\par
Suppose this map is not injective. Then there exist two distinct transverse pairs of $\gamma$ that map to the same transverse pair of $\gamma'$. This would imply that there are two segments of $\gamma'$  are identical, which is impossible. Hence, the map is injective. 
\par
The same analysis applies when we reverse the roles of $\gamma$ and $\gamma'$. Hence, the above map is indeed a bijection from the transverse pairs of $\gamma$ to those of $\gamma'$. In particular, $\gamma$ and $\gamma'$ have the same self-intersection number.

    \begin{figure}[!htbp]
	\centering
	\begin{minipage}{\linewidth}
	\includegraphics[width=\linewidth]{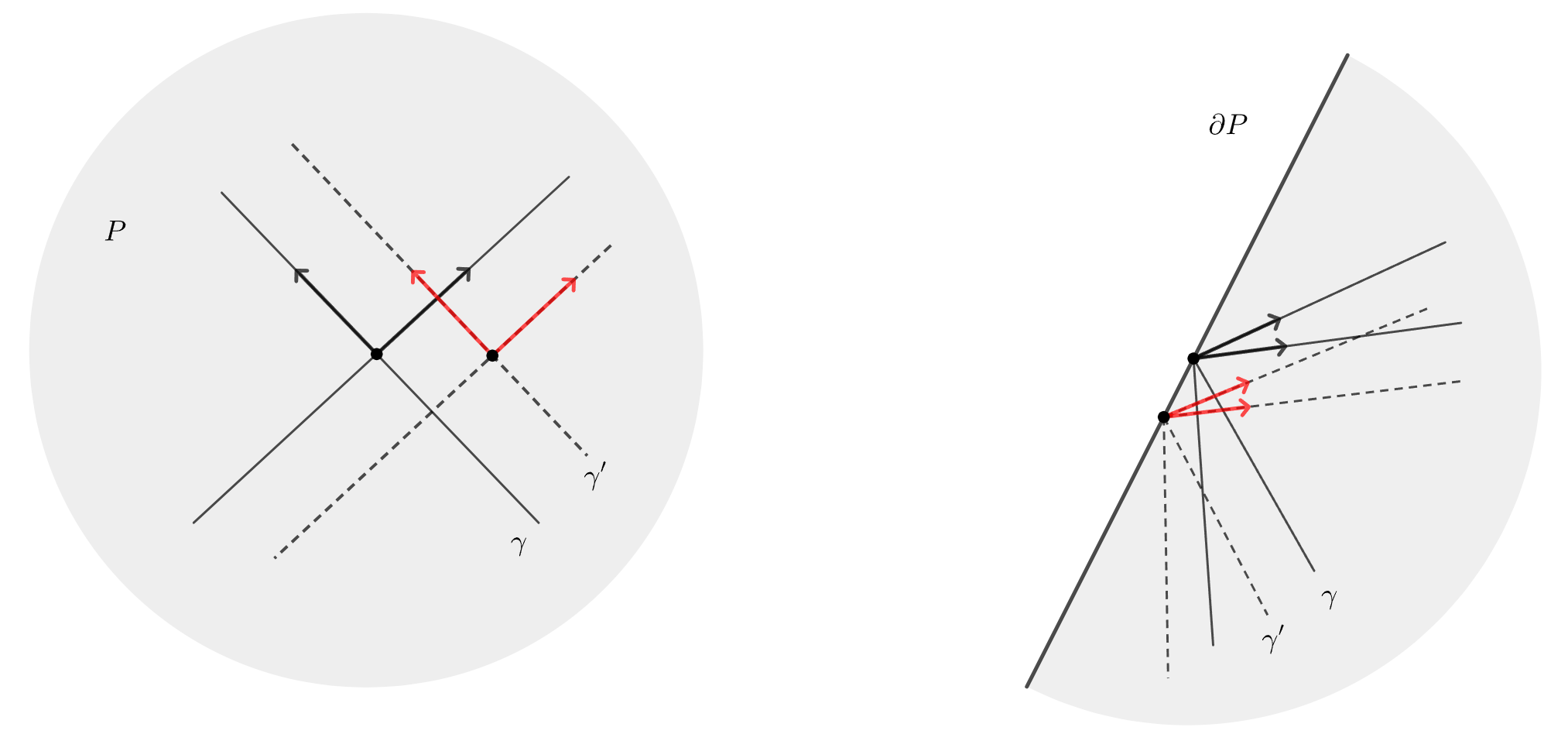}
	\footnotesize
	\end{minipage}
    \caption{A transverse pair of $\gamma$ (solid segments) corresponds to the red transverse pair of a parallel path $\gamma'$ (dotted segments). }\label{fig:evenintersection}
    \end{figure}
    
For~(3), let $\gamma$ be a periodic billiard path of odd period $T$, and let $\gamma'$ be a periodic billiard path parallel to $\gamma$. According to~(1), the period of $\gamma'$ is $2T$. Let $(s_1, \ldots, s_T)$ denote the sequence of geodesic segments of $\gamma$ in $P$, and let $(s'_1, \ldots, s'_{2T})$ denote the corresponding sequence of segments of $\gamma'$.
\par
Note that every time $\gamma$ reflects at a boundary edge, the path $\gamma'$ switches to the opposite side of $\gamma$. Since the period $T$ of $\gamma$ is odd, when $\gamma$ returns to the starting point of $s_1$, the path $\gamma'$ arrives at a segment $s'_{T+1}$ that is parallel to $s_1$ but lies on the opposite side of $s'_1$. This implies that each segment $s_i$ of $\gamma$ corresponds to two parallel segments $s'_i$ and $s'_{T+i}$ in $\gamma'$, while each segment $s'_j$ of $\gamma'$ is parallel to a unique segment $s_j$ (or $s_{j-T}$, if $j > T$) of $\gamma$.

    \begin{figure}[!htbp]
	\centering
	\begin{minipage}{\linewidth}
	\includegraphics[width=\linewidth]{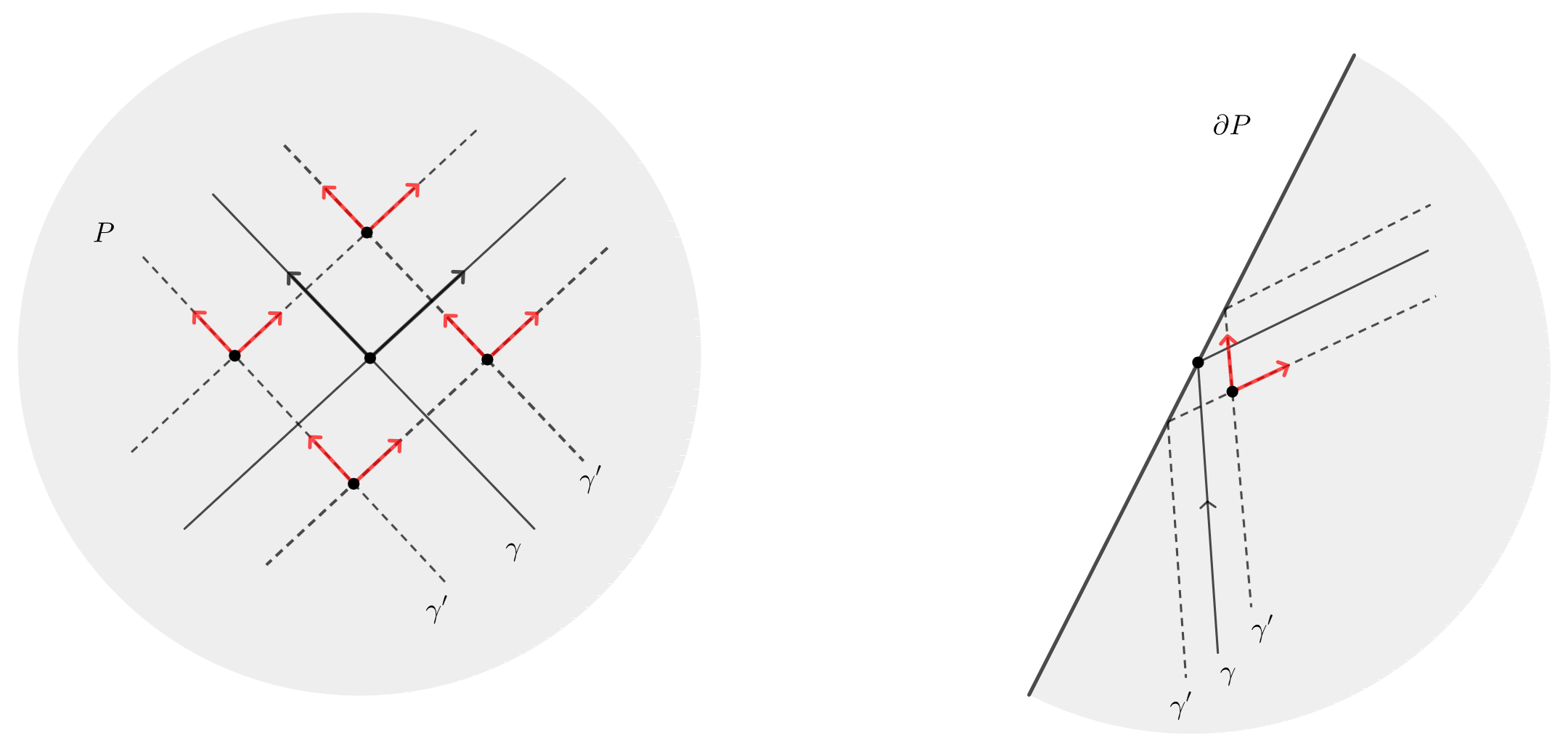}
	\footnotesize
	\end{minipage}
    \caption{Left: a transverse pair of $\gamma$ (solid segments) contributes four red transverse pairs to a parallel path $\gamma'$ (dotted segments). Right: a reflection point of $\gamma$ gives rise to a transverse pair of $\gamma'$.}\label{fig:oddintersection}
    \end{figure}

Therefore, for each transverse pair of $\gamma$ in the interior of $P$, the local configuration is as illustrated in the left part of Figure~\ref{fig:oddintersection}. Each such pair contributes four distinct transverse pairs to $\gamma'$. 
\par
Next, consider a reflection of $\gamma$ at the boundary. As shown in the right part of Figure~\ref{fig:oddintersection}, each such reflection also contributes a transverse pair of $\gamma'$. Moreover, a transverse pair of $\gamma$ at a boundary point of $P$ contributes six transverse pairs to $\gamma'$, two of which arise from the reflections of $\gamma$ at this point, see Figure~\ref{fig:oddintersectionatboundary}.

    \begin{figure}[!htbp]
	\centering
	\begin{minipage}{0.8\linewidth}
	\includegraphics[width=\linewidth]{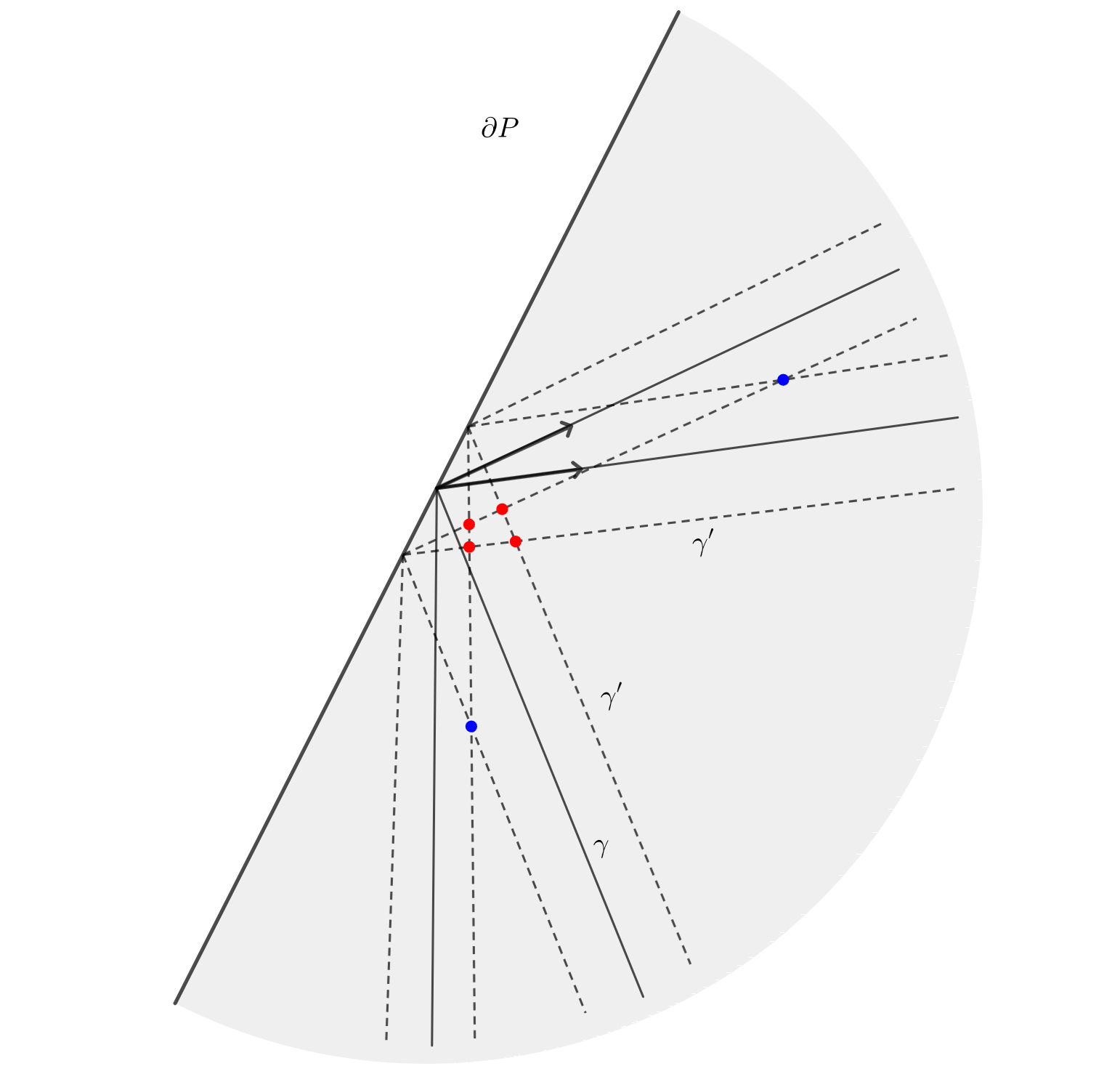}
	\footnotesize
	\end{minipage}
    \caption{The solid segments show a transverse pair of $\gamma$ at the boundary of the polygon $P$, and the dotted segments represent the parallel path $\gamma'$. At each colored point, there is a transverse pair of $\gamma'$. The four transverse pairs at the red points arise from the transverse pair of $\gamma$ at $\partial P$, while the two transverse pairs at the blue points arise from the reflections of $\gamma$.}\label{fig:oddintersectionatboundary}
    \end{figure}

Furthermore, suppose that two distinct local configurations contribute to the same transverse pair of $\gamma'$. This would imply that two geodesic segments of $\gamma'$ are identical, which is impossible.
\par
Conversely, since each segment $s'_j$ of $\gamma'$ is parallel to one segment $s_j$ (or $s_{j-T}$) of $\gamma$, a similar analysis of the local configurations implies that each transverse pair of $\gamma'$ arises from one of the cases described above. 
\par
Combining the above observations, we obtain the formula $\iota(\gamma', \gamma') = 4\iota(\gamma, \gamma) + T$,
where the number of reflections of $\gamma'$ is equal to the period $T$.
\end{proof}

By the lemma above, the following definition of self-intersection number is well-defined.
\begin{defn}\label{def:intersectionparallelperiod}
 The \emph{self-intersection number} of a maximal family of parallel periodic billiard paths is defined to be the self-intersection number of any periodic billiard path of even period in the family.
\end{defn}

\subsection{Proof of Theorem~\ref{thm:MAIN3}}
Let $P$ be a polygonal billiard. For a given $k \in \mathbb{N}$, we define $N^{diag}(P,k)$ to be the number of generalized diagonals in $P$ with self-intersection number at most $k$, and $N^{per}(P,k)$ to be the number of maximal families of periodic billiard paths in $P$ with self-intersection number at most $k$.

Recall from Section~\ref{sub:parallel} that $N^{sc}(X,k)$ denotes the number of saddle connections on a flat sphere $X$ with self-intersection number at most $k$, and $N^{cg}(X,k)$ denotes the number of maximal families of homotopic regular closed geodesics on $X$ with self-intersection number at most $k$.

\begin{proof}[Proof of Theorem~\ref{thm:MAIN3}]
Denote by $P$ a polygon with $n$ vertices and a curvature gap $\delta > 0$. Let $X_P$ be the pillowcase of $P$. By our conditions on $P$, we know that $X_P$ also has a positive curvature gap $\delta$.
\par
We claim that for any generalized diagonal or periodic billiard path $\gamma$ of even period in $P$, we have
\begin{equation}\label{equ:billiardintersectionnumber}
    \iota(t_\gamma, t_\gamma) \leq \iota(\gamma, \gamma),
\end{equation}
where $t_\gamma$ is an unfolding of $\gamma$ on $X_P$.
\par
Indeed, let $(v_1, v_2)$ be a transverse pair of tangent vectors at a self-intersection point of $t_\gamma$ on $X_P$. If the projection $\pi : X_P \to P$ maps the vectors $v_1$ and $v_2$ to parallel directions in $P$, then there exist two subpaths of $t_\gamma$, tangent to $v_1$ and $v_2$ respectively, that are mapped to the same geodesic segment in the billiard path $\gamma$. This implies that the total length of $t_\gamma$ is strictly greater than that of $\gamma$.
However, according to Remark~\ref{rmk:relation}, this situation can occur only if $\gamma$ has odd period, which contradicts our assumption. Therefore, every transverse pair of $t_\gamma$ projects to a transverse pair of $\gamma$. 
\par
Furthermore, if two distinct transverse pairs in $t_\gamma$ were to project to the same transverse pair in $\gamma$, then by the same reason, $t_\gamma$ would have to be longer than $\gamma$, which is impossible under our assumptions. Hence, the map $\pi$ induces an injective map from the transverse pairs of $t_\gamma$ into those of $\gamma$, which proves the claim.
\par
For generalized diagonals in $P$, by~\eqref{equ:billiardintersectionnumber} and Theorem~\ref{thm:MAIN}, we have that
$N^{diag}(P,k)\le N^{sc}(X_P,k)\leq (3n-6)2^{s}$, where $s=\frac{20n(n-1)\sqrt{k} + 20n}{\delta}$.
\par
Moreover, if $P$ is of area $1$, $X_P$ is of area $2$. By Theorem~\ref{thm:MAIN2}, the metric length of $t_{\gamma}$ divided by the square root of the area of $X_P$ is bounded by 
$
\frac{40n(n-1)\sqrt{k}+40n}{\delta\sqrt{\pi}}
+
\frac{20n(n-1)\sqrt{k}+20n}{\delta^{3/2}\sqrt{2\pi}}
$. Hence, the metric length of $t_{\gamma}$ is bounded by 
$
\frac{40n(n-1)\sqrt{2k}+40\sqrt{2}n}{\delta\sqrt{\pi}}
+
\frac{20n(n-1)\sqrt{k}+20n}{\delta^{3/2}\sqrt{\pi}}
$.

Next, we consider periodic billiard paths. Combining Corollary~\ref{cor:countclosedgeodesics} with the inequality~\eqref{equ:billiardintersectionnumber}, we obtain $N^{per}(P,k) \leq N^{cg}(X,k) \leq (3n - 6) \cdot 2^s$, where $s = \frac{20n(n - 1)\sqrt{k} + 20n}{\delta}$.
\end{proof}

\end{document}